\author{ M. M. Cavalcanti, L. G. Delatorre, V. H. Gonzalez Martinez, D. C. Soares and J. P. Zanchetta}
\address{ Department of Mathematics, State University of
	Maring\'a, 87020-900, Maring\'a, PR, Brazil.}
\email{mmcavalcanti@uem.br}
\thanks{Research of Marcelo M. Cavalcanti partially supported by the CNPq Grant 300631/2003-0.}
\address{Department of Mathematics, Federal University of
	Pampa - Campus Itaqui, 97650-000, Itaqui , RS, Brazil.}
\email{leonelgdelatorre@gmail.com}
\address{Department of Mathematics, State University of
	Maring\'a, 87020-900, Maring\'a, PR, Brazil.}
\email{victor.hugo.gonzalez.martinez@gmail.com, (+55) 44 - 99728-3950}
\address{Department of Mathematics, Federal University of
	Pampa - Campus Itaqui, 97650-000, Itaqui , RS, Brazil.}
\email{daiane.mtm@gmail.com}
\address{Department of Mathematics, State University of
	Maring\'a, 87020-900, Maring\'a, PR, Brazil.}
\email{	janainazanchetta@yahoo.com.br}
\title[Klein-Gordon system with locally distributed Kelvin-Voigt damping]{Uniform stabilization for the Klein-Gordon system in an inhomogeneous medium with locally distributed Kelvin-Voigt damping}
\subjclass[2010]{Primary: 35L05; Secundary: 35L53, 35B40, 93B07.}
\begin{document}

\renewcommand{\theequation}{\thesection.\arabic{equation}}
\newtheorem{theorem}{Theorem}
\newtheorem{proposition}{Proposition}[section]
\newtheorem{lemma}{Lemma}[section]
\newtheorem{corollary}{Corollary}[section]
\newtheorem{definition}{Definition}[section]
\newtheorem{remark}{Remark}
\newtheorem{assumption}{Assumption} [section]
\newtheorem{example}{Example}[section]
\newtheorem*{acknowledgement}{Acknowledgements}
\newtheorem*{notation}{Notation}

\newenvironment{dem}{\smallskip \noindent{\bf Proof}: }
{\hfill \rule{0.25cm}{0.25cm}}

\def\supp{\operatorname{{supp}}}

%%%%%%%%%%%%%%%%%%%%%%%%%%%
\begin{abstract}
 We consider the Klein-Gordon system  posed in an inhomogeneous medium $\Omega$ with smooth boundary $\partial \Omega$ subject to a local viscoelastic damping distributed around a neighborhood $\omega$ of the boundary according to the Geometric Control Condition. We show that the energy of the system goes uniformly and exponentially to zero for all initial data of finite energy taken in bounded sets of finite energy phase-space. For this purpose, refined microlocal analysis arguments are considered by exploiting ideas due to Burq and G\'erard \cite{Burq-Gerard}. By using sharp $L^p-L^q$-Carleman estimates we prove a unique continuation property for coupled systems.
\end{abstract}

\maketitle

\bigskip

\qquad {\small Keywords:}~ Wave equation, Klein-Gordon system, Kelvin-Voigt damping, exponential stability.

\medskip

%\qquad {\small MSC:}  \boxed{\rm MARCELO}

%%%%%%%%%%%%%%%%%%%%%%%%%%%%%%%%%%%%%%%%%%%%%%%%%%%%%%%%%%%%%%%%%%%%%

%
\tableofcontents

\section{Introduction}

\setcounter{equation}{0}

\subsection{Description of the Problem.}

This article addresses the exponential stability of a semilinear wave equation system posed in an inhomogeneous medium and subject to a Kelvin-Voigt damping locally distributed
\begin{equation}
\begin{cases}\label{1.1}
\rho(x) u_{tt} - \operatorname{div}(K(x) \nabla u) + v^2u- \operatorname{div}(a(x) \nabla u_t)=0  &~\hbox{ in }~ \Omega \times (0,T), \\
\rho(x) v_{tt} - \operatorname{div}(K(x) \nabla v) + u^2v - \operatorname{div}(b(x) \nabla v_t)=0  &~\hbox{ in }~ \Omega\times (0,T), \\
u = v =  0 &~\hbox{ on }~\partial \Omega \times (0,T),\\
u(x,0)=u_0(x), ~ u_t(x,0)=u_1(x) &~\hbox{ in }~ \Omega,\\
v(x,0)=v_0(x), ~ v_t(x,0)=v_1(x) &~\hbox{ in }~ \Omega,
\end{cases}
\end{equation}
where $\Omega \subset \mathbb{R}^{d}$, $d \leq 3$, is a bounded domain with a sufficiently smooth boundary $\Gamma=\partial \Omega$, $\rho:\Omega \rightarrow \mathbb{R}_+$, $k_{ij}:\Omega \rightarrow \mathbb{R}$, $1\leq i,j\leq d$ are $C^\infty(\Omega)$ functions such that for all $x\in \Omega$ and $\xi \in \mathbb{R}^d$,
\begin{eqnarray}\label{1.2}
\alpha_0\leq \rho(x)\leq \beta_0,\quad k_{ij}(x) = k_{ji}(x),\quad \alpha|\xi|^2 \leq \xi^{\top}\cdot K(x) \cdot \xi \leq \beta |\xi|^2,
\end{eqnarray}
where $\alpha_0,\beta_0,\alpha,\beta$ are positive constants and $K(x)=(k_{ij})_{i,j}$ is a symmetric positive-definite matrix. We denote by $\omega$, with smooth boundary $\partial \omega$, the intersection of $\Omega$ with a neighborhood of $\partial \Omega$ in $\mathbb{R}^d$.

\begin{assumption}\label{assumption1.1}
The non-negative functions $a(\cdot)$ and $b(\cdot)$, responsible for the localized
dissipative effect, satisfies the following conditions:
\begin{equation}\label{1.3}
a(\cdot), ~ b(\cdot) \in L^{\infty}(\Omega) \cap C^{0}(\overline{\omega}) \hbox{ with } a(x)\geq a_0> 0 \hbox{ in }  \omega\subset \Omega \hbox{ and } b(x)\geq b_0> 0 \hbox{ in }  \omega\subset \Omega.
\end{equation}
\end{assumption}

Let us assume that the following assumptions are also made:
	\begin{assumption}\label{assumption1.2}
		$\omega$ geometrically controls $\Omega$, i.e there exists $T_0 >0$, such that every geodesic of the metric $G(x)$,
		where $G(x)=\left(\frac{K(x)}{\rho(x)}\right)^{-1}$ travelling with speed $1$ and issued at $t = 0$, intercepts $\omega$ in a time $t <  T_0$.
	\end{assumption}

\begin{assumption}\label{assumption1.3}
	For every $T > 0$, the only solution $u$, $v \in C(]0,T[; L^2(\Omega)) \cap C(]0,T[,H^{-1}(\Omega))$ to the system
	\begin{equation}\label{1.4}
	\begin{cases}
	\rho(x) u_{tt} - \operatorname{div}(K(x) \nabla u) + V_1(x,t)u=V_3(x,t)v  &~\hbox{ in }~ \Omega \times (0,T), \\
	\rho(x) v_{tt} - \operatorname{div}[K(x) \nabla v] + V_2(x,t)v=V_3(x,t)u  &~\hbox{ in }~ \Omega\times (0,T), \\
	u=v=0 &~\hbox{ on }~ \omega,\\
	\end{cases}
	\end{equation}
	where  $V_1(x,t)$, $V_2(x,t)$ and $V_3(x,t)$ are elements of $L^\infty( ]0,T[,L^{\frac{d+1}{2}}(\Omega))$, is the trivial
	one $u=v= 0$.
\end{assumption}

Assumption \ref{assumption1.2} is the so called Geometric Control Condition (G.C.C.). It is well-known that it is necessary and sufficient for stabilization and control of the linear wave equation, see \cite{Bardos}, \cite{Burq-Gerard-CR}, \cite{Cavalcanti3}, \cite{Cavalcanti2}, \cite{Dehman2}, \cite{RT} and the references therein. In this direction, it is worth mentioning the work due to Betel\'u, Gulliver and Littman \cite{Littman} whom have discussed the question of {\bf closed geodesics} in the interior of a region $\Omega$. Such closed geodesics make control impossible since they are bicharacteristics that never reach the controlled boundary. In this paper, they show that, in the two-dimensional case, the lack of closed geodesics in the interior is also sufficient for control. For this reason and since in the present paper we do not have any control of the geodesics because of the inhomogeneous medium we are forced to consider $\omega$ a neighbourhood of the whole boundary $\partial \Omega$ and to give conditions on the metric $G=(K/\rho)^{-1}$ in order every geodesic of the metric $G$ enters the set $\omega$ in a time $t<T_0$.

\begin{remark}\label{remark1}
	It is important to observe that Assumption \ref{assumption1.2} is not obviously fulfilled for every matrix $G=(K/\rho)^{-1}$. For instance, the equator in the unit sphere $S^2$ is a periodic geodesic and we can consider $\Omega \subset S^2$ that contains the equator as a domain in $\mathbb{R}^2$ endowed with a Riemannian metric $G$. We shall give in the appendix of this manuscript examples where this situation occurs. An easy one happens when $G(x)=I_d$. In this case the geodesics are straight lines and necessarily they will meet the region $\omega$. We emphasize that the example presented in the appendix, that satisfies Assumption \ref{assumption1.2}, is valid not  only for Kelvin-Voigt damping but also for frictional damping.
\end{remark}

\begin{remark}\label{remark2}
In the next section we apply the Carleman estimates due to Dos Santos Ferreira \cite{David} and Koch and Tataru \cite{Tataru} to show certain situations where the unique continuation property given in the Assumption \ref{assumption1.3} holds. Although the result has been proved for a single equation, a new proof is required with respect to coupled systems of wave equations, which, to our surprise, has not been done so far (at least at the level of the above-mentioned articles). The present work seems to be the pioneer in proving a unique continuation property for coupled systems by using sharp $L^p-L^q$-Carleman estimates. In this direction the authors would like to thank Daniel Tataru for fruitful discussions regarding this issue. However, it is worth mentioning the most recent paper regarding this issue due to Tebou \cite{Tebou1}. The author considers a system of two coupled nonconservative wave equations. For this system, he proves several observability estimates. Those observability estimates
are sharp in the sense that they lead by duality to the controllability (exact or approximate) of the coupled system with a single control acting through one of the equations only while keeping the same controllability time as for a single equation. Unfortunately the aforementioned result seems not be applicable in the present paper.
\end{remark}

\subsection{Main Goal, Methodology and Previous Results.}

The main objective of the present manuscript is to prove the existence
and uniqueness for weak solutions to problem \eqref{1.1} and, in addition, that
those solutions decay exponentially and uniformly to zero, that
is, setting
\begin{align}\label{1.5}
E_{u,v}(t)= {} & \frac{1}{2}\int_\Omega \rho(x)| u_t(x,t)|^2+\rho(x)|v_t(x,t)|^2 +  \nabla u(x,t)^{\top} \cdot K(x) \cdot\nabla u(x,t) \,dx \nonumber\\
& {} +{} \frac{1}{2}\int_{\Omega}\nabla v(x,t)^{\top} \cdot K(x) \cdot\nabla v(x,t)+ (uv)^2(x,t) \,dx,
\end{align}
there exist positive constants $ C, \gamma,$
such that
\begin{eqnarray}\label{1.6}
E_u(t)\leq C e^{-\gamma\,t}E_u(0), \hbox{ for all } t\geq T_0,
\end{eqnarray}
for all weak  solutions to problem \eqref{1.1} provided that the initial data $\{u_0, v_0, u_1, v_1 \}$ are taken in bounded sets of $H_0^1(\Omega)\times H_0^1(\Omega)\times L^2(\Omega) \times  L^2(\Omega)$. This result is a {\em local stabilization result}. Indeed, the constants $C$ and $\gamma$ are uniform on every ball in $H_0^1(\Omega)\times H_0^1(\Omega)\times L^2(\Omega) \times  L^2(\Omega)$ with radius $R>0$ of the energy space but the result does not guarantee that the decay rate is global one, i.e. whether \eqref{1.6} holds with constants $C, \gamma$ which are independent of the initial data.

Inspired in Dehman, G\'erard, Lebeau, \cite{Dehman} or Dehman, G. Lebeau and Zuazua \cite{Dehman2}  we give a direct proof of the inverse inequality to problem \eqref{1.1}, namely, we prove that given $T>0$ there exists a constant positive $C=C(T)$ such that
\begin{eqnarray}\label{1.7}
E_{u,v}(0) \leq C \int_0^T\int_\Omega a(x) |\nabla u_t(x,t)|^2+b(x) |\nabla v_t(x,t)|^2\,dxdt,
\end{eqnarray}
provided the initial data are taken in bounded sets of $H_0^1(\Omega) \times L^2(\Omega)$.

To prove \eqref{1.7} and therefore the stability resut, we argue by contradiction and we find a sequence of $\{w^n,z^n\}$ of weak solutions to problem \eqref{1.1} such that $E_{w^n,z^n}(0)=1$. In order to obtain a contradiction we need to prove that $E_{w^n,z^n}(0)\rightarrow 0$ as $n \rightarrow +\infty$. We shall prove, by exploiting the properties of $K(x)$, $a(x)$, $b(x)$ and a unique continuation principle, that
\begin{eqnarray}\label{1.8}
\int_0^T \int_{\omega}   |w_t^n|^2  \, dxdt \rightarrow 0 \hbox{ and } \int_0^T \int_{\omega}   |z_t^n|^2  \, dxdt \rightarrow 0
\end{eqnarray}
when $n$ goes to infinity.

Our wish is to propagate the convergence \eqref{1.8} from $\omega \times (0,T)$ to the whole set $\Omega \times (0,T)$. In order to do this, we are able to  successfully use the microlocal analysis. Indeed we consider the microlocal defect measure $\mu$, in short m.d.m., firstly introduced by G\'erard \cite{Gerard} associated to the solution of the linear wave equation. We note that once one has
\begin{equation}\label{1.9}
\begin{aligned}
\rho(x)\partial_t^2 w_t^n - \operatorname{div}(K(x)\nabla w_t^n) \rightarrow 0~ \hbox{ in }~H^{-1}_{loc}(\Omega \times (0,T)),  \\
\rho(x)\partial_t^2 z_t^n - \operatorname{div}(K(x)\nabla z_t^n) \rightarrow 0~ \hbox{ in }~H^{-1}_{loc}(\Omega \times (0,T)),
\end{aligned}
\end{equation}
so, using properties associated to $\mu$  we are able to prove that $\mu$ propagates along the bicharacteristic flow of the wave operator $\rho(x)\partial_{tt}^{2}-\operatorname{div}(K(x) \nabla (\cdot))$ proving the desired convergence. It is important to observe that, in the present article, in order to avoid certain technicalities that may arise when considering the propagation up to the boundary, we consider $\omega$, the region where the damping is effective, as a neighborhood of the boundary and satisfying the
GCC. Under these conditions, we can use the propagation results found in \cite{Burq-Gerard}. This will be clarified in section 3 and in the appendix of the present manuscript.

		The stability of the single wave equation subject to a Kelvin-Voigt damping $$\rho(x) u_{tt} - \operatorname{div}(K(x) \nabla u) - \operatorname{div}(a(x) \nabla u_t)=0,~(0,L) \times (0,\infty),$$ becomes susceptible to the continuity of the materials, in the sense that, it has already been proved for the one dimensional case by Liu and Liu in \cite{Liu0} that if the damping coefficient $a$ is discontinuous across the interface of the materials, the energy does not decay uniformly. Obviously, the same occurs for coupled one-dimensional wave systems subject two dampings of Kelvin-Voigt type. In a higher dimensional setting, even with a more regular damping coefficient and considering a smooth initial data, as explained by Liu and Rao \cite{Liu}, there is a loss of regularity of the solutions, which makes it more difficult to use the usual multiplier methods. Hence, the multiplier method needs to be combined with other techniques in order to overcome these difficulties. The method then leads the authors to impose several technical conditions on the damping coefficient. In the work of Tebou in \cite{Tebou}, such conditions on the damping coefficient as well as the conditions on the feedback control region are relaxed.  However, the author still requires an inequality  constraint on the gradient of the damping coefficient, which will not be required in our current study.  

There are two main difficulties regarding problem \eqref{1.1}. Since we are dealing with localized  Kelvin-Voigt type dampings, in a neighborhood of the boundary, since the operators defining the dampings are unbounded. Moreover, it is worth mentioning that the presence of the coefficients in the wave operators, as considered in the present paper, makes the analysis much more refined in terms of the rays of the geometrical optics. We also have that the dampings are effective only on the set where $a(x) >0$ and $b(x)>0$, we have an interaction between an elastic material (portion of $\Omega$ where $a =b \equiv 0$) and a viscoelastic material (portion of $\Omega$ when $a>0$ and $b>0$).  By virtue of the nature of the damping there is a loss of regularity of the solutions, which makes it more difficult to use the standard multiplier methods. Hence, the multiplier method needs to be combined with other techniques in order to overcome these difficulties. The great advantage of using refined microlocal analysis arguments is that we can use the geodesic flow in our favor, and for this we need to consider $ n \geq 2 $, in order to direct the geodesics so that they touch the boundary of $ \Omega $.  This is only possible for certain special cases involving the metric $G=(K/\rho)^{-1}$.  This methodology allows us not only to prove the exponential stability of linear problems but also semi-linear problems, which remained an open question until now. The main ingredients in the proof are: (i) a new unique continuation principle for systems, (ii) the propagation of the microlocal defect measure by the geodesic flow as previously mentioned. This is the main contribution of the present article.

The model proposed in this article is inspired by an equation introduced by Segal in \cite{Segal}, given by
\begin{equation}\label{Segal}
\begin{cases}
u_{tt} - \Delta u + m_{1}^2u + gv^2u = 0  &~\hbox{ in }~ \Omega \times (0,T), \\
v_{tt} - \Delta v + m_{2}^2v   + hu^2v = 0  &~\hbox{ in }~ \Omega\times (0,T),
\end{cases}
\end{equation}
which describes the interaction of scalar fields $u, v$ of mass $m_1, m_2$, respectively, with interaction constants $g$ and $h$. This system defines the motion of charged mesons in an electromagnetic field. As the interest of this paper is to make the mathematical analysis, there is no loss of generality if we consider only the case in which $m_1=m_2=0$ and $g=h=1$.

The problem \eqref{1.1}, when $m_1=m_2=0$, and $g=h=1$ with Dirichlet boundary condition on $\partial \Omega$, was studied by Medeiros and Menzala \cite{Medeiros}. In this paper, the authors
proved the existence and uniqueness of global weak solutions, using Galerkin method, provided that $d \leq 3$.  Further generalizations are also
given in \cite{Medeiros1} and \cite{Medeiros2} by using Galerkin methods, where the authors consider nonlinearities of the form $|v|^{\rho+2}|u|^\rho u$, $|u|^{\rho+2}|v|^{\rho}v$ and some hypotheses about the coefficient $\rho$, which are related to the dimension $d$ of the space. Related to global existence and uniqueness of solutions we also have the article \cite{Andrade}, where the authors prove the existence of solutions to a Klein-Gordon system with memory and nonlinearities similar to those considered in \cite{Medeiros1}  and \cite{Medeiros2}. Another generalization for this system can be founded in \cite{Cousin}, where the authors consider a $k \times k$ system os Klein-Gordon equations with acoustic boundary conditions.

In \cite{Ferreira1} Ferreira deduce decay results for the solutions of the system of nonlinear Klein-Gordon equations in $\mathbb{R}^3$, more precisely, if the inital datas are taken in $C_{0}^{\infty}(\mathbb{R}^{3})$ then, every solution of the system decays uniformly in $x$ at the rate $\mathcal{O}(t^{-\frac{3}{2}})$ as $|t|\rightarrow \infty$, and is asymptotic to a free solution at $t=\infty$ or $t=-\infty$. To get these results, they use techniques developed by Morawetz and Strauss. 

 We also would like to quote the papers \cite{Ferreira4} and \cite{Ferreira2}. In these papers, the exponential stability for the system \eqref{Segal} is established, under the assumption of two frictional damping terms $a(x) u_t$ and $b(x) v_t$ instead of Kelvin-Voigt dampings, as considered in the present article, where the proof also works for any combination of dampings of frictional or Kelvin-Voigt's type. In addition, in \cite{Ferreira4} and \cite{Ferreira2} the problem is considered in a homogeneous medium which is easier to be analysed since the bicharacteristics are straight lines.  We also note that in \cite{Ferreira4} and \cite{Ferreira2} the author mentions the pioneering work of Ruiz \cite{Ruiz}, which ensures the unique continuation property for a single equation with potential $V(t,x) \in L^\infty(]0,T[, L^n(\Omega))$ and the Euclidean metric $G(x)= I_{d}$, and assumes, without proof,  that the result is valid for coupled systems. In the present paper we solve this question once and for all. Finally, in  \cite{Ferreira4} and \cite{Ferreira2}, due to the type of dissipation used, it is possible to obtain the existence of regular solutions and consequently the use of radial multipliers in order to prove the exponential stability , which can not be definitely done in the context of this article.
In the paper \cite{Cavalcanti4}, the authors consider a generalized Klein-Gordon system consisting of two coupled semilinear evolution equations of mixed degenerate hyperbolic-parabolic type. Boundary conditions are assumed to be dissipative as well as global existence and asymptotic decay rates are proved.

In \cite{Ferreira3}, the authors have been study time decay properties of solutions of the problem \eqref{Segal}. They show that, in the $L^\infty(\mathbb{R}^{3})$ norm, the solutions decay like $\mathcal{O}(t^{-\frac{3}{2}})$ as $t \rightarrow + \infty$ provided the initial data are sufficiently small. After this, the author prove that finite energy solutions of such a system decay in local energy norm as $t \rightarrow + \infty$. To our knowledge, this was the first work to address the existence and uniqueness of solutions to this system through the semigroups theory.

Our paper is organized as follows. In Section 2 we verify that the unique continuation property holds for the coupled system. In Section 3 we give some notation and we establish the well-posedness to problem \eqref{1.1}. In Section 4 and in the Appendix we give the proof of the stabilization which consists our main result.

\section{Unique Continuation Property}
\setcounter{equation}{0}
Let us now recall some results on the unique continuation property for differential operators. For this, we will use the notations of Dos Santos Ferreira  \cite{David} and Koch-Tataru \cite{Tataru}.

Let $P(x,D)$ be a second order differential opertor of real principal type with $C^\infty$ coefficients and $u$ be the solution of the differential equation
\begin{equation}\label{2.1}
P(x,D)u+V(x)u=0, ~ x \in \Omega \subset \mathbb{R}^{d}
\end{equation}
where $V \in L^{\frac{d}{2}}_{loc}$ and let $S$ be a $C^\infty$ hypersurface in $\mathbb{R}^{d}$ locally defined by
\begin{eqnarray}\label{2.2}
S_{+}&=&\{x \in \Omega: \psi(x) > \psi(x_0)  \} \nonumber \\
S_{-}&=&\{x \in \Omega: \psi(x) < \psi(x_0)  \}.
\end{eqnarray}
We say that a solution of \eqref{2.1} has unique continuation across the hypersurface $S$ if when $u$ vanishes on the side $S_{-}$ then it vanishes on a whole neighbourhood of $x_0$.

\begin{definition}\label{definition2.1}
	The hypersurface $S=\{x \in \Omega: \phi(x)=\phi(x_0)   \}$ ie said to be (strictly) pseudo-convex at $x_0 \in \Omega$ with respect to the real principal type differential operator $P$ of oreder $2$ whose principal symbol is $p$ whenerver\begin{equation*}
	p(x_0,\xi)=H_p\phi(x_0,\xi)=0 \Rightarrow H_{p}^{2}(x_0,\xi)<0  \hbox{ for all } \xi \in \mathbb{R}^{d}.
	\end{equation*}
\end{definition}

Using $(L^{p}-L^{q})$ Carleman estimates, the following result is proved by Dos Santos Ferreira in \cite{David}
\begin{proposition}\label{proposition2.1}
	Let $P(x,D)$ be a differential operator of order $2$, defined on an open set $\Omega \subset \mathbb{R}^{d}$, $n>2$. Suppose that there exists $M \geq 1$, a neighbouhood $\Omega_0$ of $x_0$ and a function $\phi \in C^\infty$ verifying $\{ x \in \Omega_0: x \neq x_0, \phi(x)\leq \phi(x_0)  \} \subset S_{-}$ such that the Carleman estimate
	\begin{equation}\label{2.3}
	\|e^{-\tau \phi}v\|_{L^{\frac{2d}{d-2}}}\leq C \|e^{-\tau}\phi P(x,D)v\|_{L^{\frac{2d}{d+2}}}
	\end{equation}
	holds for all $u \in C_{0}^{\infty}(\Omega_0)$ and $\tau \geq M$. Then if $u \in H^1$ is the solution of the equation \eqref{2.1} on $\Omega$ where $V \in L^{\frac{d}{2}}_{loc}$ and $u$ vanishes on $S_{-}$, then exist a neighbourhood of $x_0$ on which u vanishes.
\end{proposition}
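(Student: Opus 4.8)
The plan is to deduce the local vanishing of $u$ from the Carleman estimate \eqref{2.3} by the classical cut-off argument; the one non-routine ingredient is a choice of cut-off adapted simultaneously to the weight $\phi$ and to the geometry of $S_-$. After a translation we may assume $\phi(x_0)=0$. Since $u\in H^1$, by Sobolev embedding $u\in L^{2d/(d-2)}_{\mathrm{loc}}$; the hypothesis $\{x\in\Omega_0:x\neq x_0,\ \phi(x)\le0\}\subset S_-$ together with $u|_{S_-}=0$ and the openness of $S_-$ gives $u=0$ and $\nabla u=0$ a.e.\ on $\{\phi\le0\}\cap\Omega_0$. One then applies \eqref{2.3} to $v=\chi u$, with $\chi\equiv1$ near $x_0$, writes $P(\chi u)=-\chi Vu+[P,\chi]u$ using \eqref{2.1}, absorbs the potential term, and arranges that $[P,\chi]u$ be supported only where $e^{-\tau\phi}$ is exponentially small or where $u$ already vanishes; letting $\tau\to\infty$ then forces $u\equiv0$ near $x_0$.

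First I would fix the scales. Choose $r>0$ with $\overline{B(x_0,r)}\subset\Omega_0$ and, using $V\in L^{d/2}_{\mathrm{loc}}$ and absolute continuity of the integral, small enough that $C\|V\|_{L^{d/2}(B(x_0,r))}\le\tfrac12$, $C$ being the constant in \eqref{2.3}. The compact set $\{\tfrac r2\le|x-x_0|\le r\}\cap\{\phi\le0\}$ lies in $S_-$ (each of its points is in $\Omega_0$ and distinct from $x_0$), so $\phi>0$ on the compact set $\{\tfrac r2\le|x-x_0|\le r\}\setminus S_-$, hence $\phi\ge2\delta_0$ there for some $\delta_0>0$; fix $\delta\in(0,\delta_0)$, so that $\{\tfrac r2\le|x-x_0|\le r\}\cap\{\phi\le\delta\}\subset S_-$. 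Now take a product cut-off $\chi(x)=\theta_1(\phi(x))\,\theta_2(|x-x_0|)$ with $\theta_1\equiv1$ on $(-\infty,\delta/2]$, $\supp\theta_1\subset(-\infty,\delta)$, $\theta_2\equiv1$ on $[0,r/2]$, $\supp\theta_2\subset[0,r)$. Then $\chi\equiv1$ on $\{\phi<\delta/2\}\cap B(x_0,r/2)$, $\supp\chi\subset\{\phi<\delta\}\cap B(x_0,r)$, and $\supp(\nabla\chi)\subset E_1\cup E_2$ where $E_1\subset\{\phi\ge\delta/2\}$ and $E_2\subset\{\tfrac r2\le|x-x_0|\le r,\ \phi\le\delta\}\subset S_-$; in particular $[P,\chi]u=0$ a.e.\ on $E_2$.

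Next I would apply \eqref{2.3} to $v=\chi u$, which is legitimate after the routine mollification (Friedrichs) step since $\chi u\in H^1$ has compact support in $\Omega_0$ and $P(\chi u)\in L^{2d/(d+2)}$. From $\chi Pu=-\chi Vu$ and the identity $\tfrac2d+\tfrac{d-2}{2d}=\tfrac{d+2}{2d}$, Hölder's inequality gives $\|e^{-\tau\phi}\chi Vu\|_{L^{2d/(d+2)}}\le\|V\|_{L^{d/2}(\supp\chi)}\,\|e^{-\tau\phi}\chi u\|_{L^{2d/(d-2)}}$, and by the choice of $r$ this term is absorbed into the left-hand side, leaving $\|e^{-\tau\phi}\chi u\|_{L^{2d/(d-2)}}\le2C\|e^{-\tau\phi}[P,\chi]u\|_{L^{2d/(d+2)}}$. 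Since $[P,\chi]u$ is first order in $u\in H^1$, hence lies in $L^{2d/(d+2)}_{\mathrm{loc}}$ (because $\tfrac{2d}{d+2}<2$), is supported in $E_1\cup E_2$, vanishes a.e.\ on $E_2$, and satisfies $e^{-\tau\phi}\le e^{-\tau\delta/2}$ on $E_1$, we obtain $\|e^{-\tau\phi}\chi u\|_{L^{2d/(d-2)}}\le C'e^{-\tau\delta/2}$ with $C'$ independent of $\tau$. Restricting the left-hand side to the open neighbourhood $N:=\{\phi<\delta/4\}\cap B(x_0,r/2)$ of $x_0$, on which $\chi=1$ and $e^{-\tau\phi}>e^{-\tau\delta/4}$, yields $\|u\|_{L^{2d/(d-2)}(N)}\le C'e^{-\tau\delta/4}\to0$ as $\tau\to\infty$, so $u\equiv0$ on $N$.

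The only genuine obstacle is the construction in the second step: a cut-off whose gradient is supported solely where $e^{-\tau\phi}$ is already exponentially favourable or where $u$ is already known to vanish. This is exactly where the hypothesis $\{x\neq x_0,\ \phi(x)\le\phi(x_0)\}\subset S_-$ — the strict pseudo-convexity of Definition~\ref{definition2.1} packaged into $\phi$ — enters: it forces the outer portion of $\supp\chi$, at distance $\sim r$ from $x_0$ and at weight level $\le\delta\ll\delta_0(r)$, to lie strictly inside $S_-$. The remaining points — the Hölder absorption of the $L^{d/2}$ potential, which is precisely what the sharpness of the $L^p-L^q$ estimate \eqref{2.3} is designed for, the $H^1$ bookkeeping, and the density argument making $v=\chi u$ admissible in \eqref{2.3} — are routine.
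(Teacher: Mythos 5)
Your argument is correct and follows essentially the same route as the paper's (which cites Dos Santos Ferreira for Proposition~\ref{proposition2.1} itself but reproduces the identical scheme in the proof of Proposition~\ref{proposition2.2}): apply \eqref{2.3} to $\chi u$, absorb the potential term via H\"older with $\|V\|_{L^{d/2}}$ small on a small support, and kill the commutator term by letting $\tau\to\infty$ using $\phi\ge c>0$ on its effective support. The only difference is bookkeeping: you build a product cut-off so that $\supp\nabla\chi$ splits into a piece where $\phi\ge\delta/2$ and a piece inside $S_-$ where $[P,\chi]u$ vanishes, whereas the paper takes a generic $\chi$ and intersects $\supp\nabla\chi$ with $\supp u\subset\{\phi>0\}$ directly; both yield the same exponential bound.
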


Inspired in this Proposition, we obtain the following result:

\begin{proposition}\label{proposition2.2}
	Let $P_1(x,D)$ and $P_2(x,D)$ be two differential operator of order $2$, defined on an open set $\Omega \subset \mathbb{R}^{d}$, $n>$. Suppose that there exists $M \geq 1$, a neighbouhood $\Omega_0$ of $x_0$ and a function $\phi \in C^\infty$ verifying $\{ x \in \Omega_0: x \neq x_0, \phi(x)\leq \phi(x_0)  \} \subset S_{-}$ such that the following Carleman estimates
	\begin{eqnarray}\label{2.4}
	\|e^{-\tau \phi}u\|_{L^{\frac{2d}{d-2}}}\leq C \|e^{-\tau\phi} P_1(x,D)u\|_{L^{\frac{2d}{d+2}}}
	\end{eqnarray}
	and
	\begin{eqnarray}\label{2.5}
	\|e^{-\tau \phi}v\|_{L^{\frac{2d}{d-2}}}\leq C \|e^{-\tau\phi} P_2(x,D)v\|_{L^{\frac{2d}{d+2}}}	
	\end{eqnarray}
	holds for all $u$, $v \in C_{0}^{\infty}(\Omega_0)$ and $\tau \geq M$. Then the unique continuation property holds for the solution of the coupled system
	\begin{equation}\label{2.6}
	\begin{cases}
	P_1(x,D)u+V_1u= &V_3v,\\
	P_2(x,D)v+V_2v=V_3u,
	\end{cases}
	\end{equation}
	where $V_1$, $V_2$ and $V_3$ are elements of $L^{\frac{d}{2}}_{loc}$.
\end{proposition}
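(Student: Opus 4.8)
The plan is to reduce the coupled system to a single scalar inequality to which Proposition~\ref{proposition2.1} (in its quantitative Carleman form) applies. Suppose $u,v$ solve \eqref{2.6} and both vanish on $S_{-}$; I want to show $u$ and $v$ vanish on a neighbourhood of $x_0$. The natural idea is to add the two Carleman estimates \eqref{2.4}--\eqref{2.5}: for $u,v\in C_0^\infty(\Omega_0)$ and $\tau\ge M$,
\begin{equation*}
\|e^{-\tau\phi}u\|_{L^{\frac{2d}{d-2}}}+\|e^{-\tau\phi}v\|_{L^{\frac{2d}{d-2}}}\le C\Bigl(\|e^{-\tau\phi}P_1(x,D)u\|_{L^{\frac{2d}{d+2}}}+\|e^{-\tau\phi}P_2(x,D)v\|_{L^{\frac{2d}{d+2}}}\Bigr).
\end{equation*}
First I would localize: introduce a cutoff $\chi\in C_0^\infty(\Omega_0)$ equal to $1$ near $x_0$, apply the estimate to $\chi u$ and $\chi v$, and absorb the commutator terms $[P_i,\chi]u$, $[P_i,\chi]v$, which are supported in $\{\chi\ne 1\}$, i.e. away from $x_0$ where (by the pseudo-convexity geometry encoded in $\phi$) the weight $e^{-\tau\phi}$ is exponentially smaller than near $x_0$. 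This is the standard mechanism by which Carleman estimates yield local unique continuation, exactly as in the single-equation case of Proposition~\ref{proposition2.1}.

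The new point compared with the scalar situation is the treatment of the right-hand sides after substituting the equations: $P_i(x,D)(\chi u)=\chi(V_3 v-V_1 u)+[P_1,\chi]u$ and similarly for $v$. The potential terms $\chi V_1 u$ and $\chi V_2 v$ are handled exactly as in \cite{David}: since $V_1,V_2\in L^{d/2}_{loc}$, Hölder's inequality gives $\|e^{-\tau\phi}V_i(\chi w)\|_{L^{\frac{2d}{d+2}}}\le \|V_i\|_{L^{d/2}(\Omega_0)}\|e^{-\tau\phi}\chi w\|_{L^{\frac{2d}{d-2}}}$, and choosing $\Omega_0$ small enough that $\|V_i\|_{L^{d/2}(\Omega_0)}$ is small, these terms are absorbed into the left-hand side. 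The coupling terms $\chi V_3 v$ (appearing in the equation for $u$) and $\chi V_3 u$ (in the equation for $v$) are estimated the same way: $\|e^{-\tau\phi}V_3(\chi v)\|_{L^{\frac{2d}{d+2}}}\le \|V_3\|_{L^{d/2}(\Omega_0)}\|e^{-\tau\phi}\chi v\|_{L^{\frac{2d}{d-2}}}$, which on the right-hand side of the estimate for $u$ is controlled by the $v$-term on the \emph{left}-hand side of the summed inequality — and symmetrically. Thus after shrinking $\Omega_0$ so that $\|V_1\|_{L^{d/2}(\Omega_0)}$, $\|V_2\|_{L^{d/2}(\Omega_0)}$, $\|V_3\|_{L^{d/2}(\Omega_0)}$ are all small relative to $1/C$, every potential and coupling contribution is absorbed into $\|e^{-\tau\phi}\chi u\|_{L^{\frac{2d}{d-2}}}+\|e^{-\tau\phi}\chi v\|_{L^{\frac{2d}{d-2}}}$, leaving only the commutator terms on the right.

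At that stage the argument closes exactly as for a single equation: the surviving right-hand side $\|e^{-\tau\phi}([P_1,\chi]u)\|_{L^{\frac{2d}{d+2}}}+\|e^{-\tau\phi}([P_2,\chi]v)\|_{L^{\frac{2d}{d+2}}}$ is supported where $\chi$ is non-constant, hence where $\phi(x)\le \phi(x_0)-\delta$ for some $\delta>0$ by the hypothesis $\{x\in\Omega_0: x\ne x_0,\ \phi(x)\le\phi(x_0)\}\subset S_-$ together with the convexification, so it is $O(e^{-\tau(\phi(x_0)-\delta)})$ times a fixed constant; meanwhile the left-hand side, restricted to a small ball $B$ around $x_0$ on which $\phi\ge\phi(x_0)-\delta/2$, is at least $e^{-\tau(\phi(x_0)-\delta/2)}\|u\|_{L^{\frac{2d}{d-2}}(B)}+e^{-\tau(\phi(x_0)-\delta/2)}\|v\|_{L^{\frac{2d}{d-2}}(B)}$. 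Dividing and letting $\tau\to+\infty$ forces $u=v=0$ on $B$. I expect the main obstacle to be purely bookkeeping: making the smallness requirements on $\|V_1\|_{L^{d/2}(\Omega_0)},\|V_2\|_{L^{d/2}(\Omega_0)},\|V_3\|_{L^{d/2}(\Omega_0)}$ compatible (one must absorb three terms simultaneously with a single constant $C$) and verifying that the $H^1$-regularity of $u,v$ suffices to justify the density argument by which the Carleman estimate, a priori stated for $C_0^\infty$ functions, is applied to $\chi u$ and $\chi v$ — this is where one invokes the hypothesis $u,v\in H^1$ and a standard approximation/elliptic-regularity step, as in Proposition~\ref{proposition2.1}.
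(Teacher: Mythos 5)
Your strategy is the same as the paper's: apply each Carleman estimate to $\chi u$ and $\chi v$, sum the two inequalities so that the cross terms $V_3v$ and $V_3u$ on the right are absorbed by the other component on the left, absorb the diagonal potentials by shrinking the support of $\chi$ (equivalently $\Omega_0$) so that the norms $\|V_i\|_{L^{d/2}}$ are small, and conclude from the surviving commutator terms. The one step you have written incorrectly is the final localization of the weight: with the weight $e^{-\tau\phi}$, the commutators must be supported where $\phi$ is \emph{large}, not small. Concretely, $[P_i,\chi]u$ is supported in $\operatorname{supp}u\cap\overline{\Omega_0\setminus\Omega_1}$ --- the intersection with $\operatorname{supp}u$ is essential and is precisely where the hypothesis that $u,v$ vanish on $S_-\supset\{x\ne x_0:\phi(x)\le\phi(x_0)\}$ enters; on this compact set, which avoids $x_0$, one has $\phi\ge\phi(x_0)+\delta$ for some $\delta>0$, hence $e^{-\tau\phi}\le e^{-\tau(\phi(x_0)+\delta)}$ there and the right-hand side is $O\bigl(e^{-\tau(\phi(x_0)+\delta)}\bigr)\,(\|u\|_{H^1}+\|v\|_{H^1})$. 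Correspondingly, the lower bound for the left-hand side must be taken on a neighbourhood $B$ of $x_0$ where $\phi\le\phi(x_0)+\delta/2$, giving at least $e^{-\tau(\phi(x_0)+\delta/2)}\bigl(\|u\|_{L^{2d/(d-2)}(B)}+\|v\|_{L^{2d/(d-2)}(B)}\bigr)$; dividing and letting $\tau\to\infty$ then forces $u=v=0$ on $B$. You wrote both inequalities with the opposite sign ($\phi\le\phi(x_0)-\delta$ on the commutator support, $\phi\ge\phi(x_0)-\delta/2$ on $B$), under which the exponential factors compare the wrong way and the argument does not close; once the signs are corrected your proof coincides with the one in the paper.
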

\begin{proof}
	Suppose $x_0=0$ and $\psi(0)=0$ and let $\chi \in C_{0}^{\infty}(\Omega_0)$ equal $1$ on a neighbourhood $\Omega_1$ of $0$. Applying the Carleman inequality to the functions $w_1=\chi u$, $w_2=\chi v$ and observing that $L^{\frac{d}{2}} \cdot L^{\frac{2d}{d-2}} \subset L^{\frac{2d}{d+2}}$, we obtain
	\begin{align*}
		\|e^{-\tau \phi} w_1\|_{L^{\frac{2d}{d-2}}}  \lesssim {} & \|e^{-\tau \phi} P_1(x,D)w_1\|_{L^{\frac{2d}{d+2}}}\\
		 \lesssim {} &  \|e^{-\tau \phi} P_1(x,D)(\chi u)\|_{L^{\frac{2d}{d+2}}}\\
		 \lesssim {} & \|e^{-\tau \phi}( \chi P_1(x,D)u+[P_1,\chi]u)\|_{L^{\frac{2d}{d+2}}}\\
		 \lesssim {} & \|V_3e^{-\tau \phi} \chi v -V_1e^{-\tau \phi}\chi u+e^{-\tau \phi}[P_1,\chi]u\|_{L^{\frac{2d}{d+2}}}\\
		 \lesssim {} & \|V_3\|_{L^{\frac{d}{2}}(K)}\|e^{-\tau \phi}\chi v\|_{L^{\frac{2d}{d-2}}}+\|V_1\|_{L^{\frac{d}{2}}(K)}\|e^{-\tau \phi}\chi u\|_{L^{\frac{2d}{d-2}}}\\
		&{}+\|e^{-\tau \phi}[P_1,\chi]u\|_{L^{\frac{2d}{d+2}}},
	\end{align*}
	where $K=\supp(\chi)$. That is,
	\begin{align}\label{2.7}
	\|e^{-\tau \phi} w_1\|_{L^{\frac{2d}{d-2}}}{} \lesssim {}&  \|V_3\|_{L^{\frac{d}{2}}(K)}\|e^{-\tau \phi}\chi v\|_{L^{\frac{2d}{d-2}}}+\|V_1\|_{L^{\frac{d}{2}}(K)}\|e^{-\tau \phi}\chi u\|_{L^{\frac{2d}{d-2}}} \nonumber  \\
	&{}+ \|e^{-\tau \phi}[P_1,\chi]u\|_{L^{\frac{2d}{d+2}}}.
	\end{align}
	Analogously,
	\begin{align}\label{2.8}
	\|e^{-\tau \phi} w_2\|_{L^{\frac{2d}{d-2}}} {} \lesssim {} & \|V_3\|_{L^{\frac{d}{2}}(K)}\|e^{-\tau \phi}\chi u\|_{L^{\frac{2d}{d-2}}}+\|V_2\|_{L^{\frac{d}{2}}(K)}\|e^{-\tau \phi}\chi v\|_{L^{\frac{2d}{d-2}}} \nonumber  \\
	& {} + {} \|e^{-\tau \phi}[P_2,\chi]v\|_{L^{\frac{2d}{d+2}}}.
	\end{align}
		Adding inequalities \eqref{2.7} and \eqref{2.8} and organizing the terms, we obtain
	\begin{align}\label{2.9}
	\|e^{-\tau \phi} \chi u\|_{L^{\frac{2d}{d-2}}} +\|e^{-\tau \phi} \chi v\|_{L^{\frac{2d}{d-2}}}  \lesssim {}  &  (\|V_1\|_{L^{\frac{d}{2}}(K)}+\|V_3\|_{L^{\frac{d}{2}}(K)})\|e^{-\tau \phi}\chi u\|_{L^{\frac{2d}{d-2}}}\nonumber  \\
	& {} + {} (\|V_2\|_{L^{\frac{d}{2}}(K)}+\|V_3\|_{L^{\frac{d}{2}}(K)})\|e^{-\tau \phi}\chi v\|_{L^{\frac{2d}{d-2}}}  \nonumber \\
	& {} + {} \|e^{-\tau \phi}[P_1,\chi]u\|_{L^{\frac{2d}{d+2}}}+\|e^{-\tau \phi}[P_2,\chi]v\|_{L^{\frac{2d}{d+2}}}.
    \end{align}
	Then we choose $\chi$ with sufficiently small support so that the two first terms on the right handside of \eqref{2.9} may be absorbed in the left hand side term, thus
	\begin{eqnarray}\label{2.10}
	\|e^{-\tau \phi} \chi u\|_{L^{\frac{2d}{d-2}}} +\|e^{-\tau \phi} \chi v\|_{L^{\frac{2d}{d-2}}} & \lesssim & \|e^{-\tau \phi}[P_1,\chi]u\|_{L^{\frac{2d}{d+2}}}+\|e^{-\tau \phi}[P_2,\chi]v\|_{L^{\frac{2d}{d+2}}}.
	\end{eqnarray}
	But $[P_1(x,D),\chi]u$ and $[P_2(x,D),\chi]v$ are classical differential operators of order $1$ supported in $\supp u \cap \overline{\Omega_0 \setminus \Omega_1} \subset \{ \phi>0\}$ and  $\supp v \cap \overline{\Omega_0 \setminus \Omega_1} \subset \{\phi>0\}$ where one has $\phi>c>0$, which implies that
	\begin{eqnarray*}
		\|e^{-\tau \phi} \chi u\|_{L^{\frac{2d}{d-2}}} +\|e^{-\tau \phi} \chi v\|_{L^{\frac{2d}{d-2}}} & \lesssim & e^{-\tau c} (\|[P_1(x,D),\chi]u\|_{L^{\frac{2d}{d+2}}}+\|[P_2(x,D),\chi]v\|_{L^{\frac{2d}{d+2}}})\\
		& \lesssim & e^{-\tau c} (\|u\|_{H^1}+\|v\|_{H^1}).
	\end{eqnarray*}
	Therefore,
	\begin{align*}
		\|e^{-\tau (\phi-c)} \chi u\|_{L^{\frac{2d}{d-2}}} +\|e^{-\tau (\phi-c)} \chi v\|_{L^{\frac{2d}{d-2}}}
		 \lesssim {} & \|u\|_{H^1}+\|v\|_{H^1},
	\end{align*}
	that is, $\|e^{-\tau (\phi-c)} \chi u\|_{L^{\frac{2d}{d-2}}} +\|e^{-\tau (\phi-c)} \chi v\|_{L^{\frac{2d}{d-2}}}$ is bounded, which is impossible unless $u=v=0$ when $\phi < c$. This complete the proof of the unique continuation.
\end{proof}

Similar arguments can be applied to prove unique continuation property for second order hyperbolic coupled systems with nonsmooth coefficients and potentials in $L^{\frac{d+1}{2}}$. Indded, in Koch-Tataru we can find the following result:

\begin{theorem}\label{theorem1}
	Let $P$ be a second-order hyperbolic operator with $C^2$ coefficients. Let $\phi$ be a strictly pseudo-convex function with respect to $P$. Then for compactly supported $u$ we have
	\begin{equation}\label{2.11}
	\|e^{\tau \phi}u\|_{L^{\frac{2(d+1)}{d-1}}\cap \tau^{-\frac{1}{4}}H_{\tau}^{\frac{1}{2}}} \lesssim \|e^{\tau \phi } P(x,D)u\|_{L^{\frac{2(d+1)}{d+3}}+\tau^{\frac{1}{4}}H_{\tau}^{-\frac{1}{4}}}, ~ \tau >\tau_0.
	\end{equation}
\end{theorem}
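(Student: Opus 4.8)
The plan is to establish \eqref{2.11} by the standard conjugation-and-microlocalization scheme for $L^{p}$--$L^{q}$ Carleman estimates, following Dos Santos Ferreira \cite{David} and Koch--Tataru \cite{Tataru}. Setting $v=e^{\tau\phi}u$, the estimate is equivalent to
\[
\|v\|_{L^{\frac{2(d+1)}{d-1}}\cap\,\tau^{-1/4}H_{\tau}^{1/2}}\lesssim\|P_{\phi}v\|_{L^{\frac{2(d+1)}{d+3}}+\,\tau^{1/4}H_{\tau}^{-1/4}},
\]
where $P_{\phi}=e^{\tau\phi}P(x,D)e^{-\tau\phi}$ carries the large parameter $\tau$. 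After the rescaling $x\mapsto\tau x$ this becomes a semiclassical second-order operator with small parameter $h=\tau^{-1}$, whose principal symbol is $p_{\phi}(x,\xi)=p(x,\xi+i\,d\phi(x))$, with real and imaginary parts $a(x,\xi)=p(x,\xi)-p(x,d\phi(x))$ and $b(x,\xi)=\langle\partial_{\xi}p(x,\xi),d\phi(x)\rangle$. Off the characteristic set $\Sigma=\{a=b=0\}$ the operator $P_{\phi}$ is semiclassically elliptic, so a parametrix together with Sobolev embedding settles that region with room to spare; all the difficulty is concentrated near $\Sigma$.

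Next I would introduce a microlocal partition of unity separating a conic neighbourhood of $\Sigma$ from its complement and treat the two pieces independently. Near $\Sigma$, the strict pseudo-convexity hypothesis (Definition \ref{definition2.1}) is precisely the subellipticity condition, namely positivity of the Poisson bracket $\{a,b\}$ on $\Sigma$, which via a positive-commutator (sharp G\aa rding) argument applied to $P_{\phi}^{*}P_{\phi}$ yields the $L^{2}$ gain of a half power of $\tau$ recorded in the $\tau^{-1/4}H_{\tau}^{1/2}$ endpoint. To promote this $L^{2}$ inequality to the Strichartz exponent $L^{\frac{2(d+1)}{d-1}}$, one freezes the coefficients of $P$ at points of $\Sigma$, normalises the resulting constant-coefficient model to a ``wave'' operator, and invokes the uniform-in-$\tau$ $L^{q}\to L^{p}$ bounds for its microlocally truncated inverse; these are exactly the estimates supplied by the Stein--Tomas restriction/extension theorem and its oscillatory-integral refinements, once one decomposes dyadically in $\mathrm{dist}(\cdot,\Sigma)$. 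Reassembling the phase-space pieces, keeping track of the overlaps and of the precise powers $\tau^{\pm1/4}$ and the semiclassical spaces $H_{\tau}^{\pm1/4}$, $H_{\tau}^{1/2}$, then reproduces \eqref{2.11}.

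The principal obstacle has two faces. First, $P$ has only $C^{2}$ coefficients, so $P_{\phi}$ is not a genuine pseudodifferential operator; one must run a paradifferential decomposition, truncating the coefficients at frequency $\sim\tau$ and absorbing the resulting errors using two --- and only two --- derivatives, which has to be carried out exactly at the scale $|\xi|\sim\tau$ where the Carleman weight lives and is where the $C^{2}$ threshold is essentially sharp. Second, proving the $L^{p}$--$L^{q}$ mapping bounds for the localised model resolvents uniformly in $\tau$ requires the full strength of the restriction theorem combined with a Littlewood--Paley decomposition adapted to the distance to the characteristic variety, and tracking the sharp $\tau$-powers through this argument is the technical heart of the matter. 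Since Theorem \ref{theorem1} is precisely the estimate proved by Koch and Tataru, in the present paper we invoke it directly from \cite{Tataru} and combine it with Proposition \ref{proposition2.2} to obtain the unique continuation property postulated in Assumption \ref{assumption1.3}.
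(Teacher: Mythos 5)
The paper offers no proof of Theorem \ref{theorem1}: it is quoted directly from Koch--Tataru \cite{Tataru}, and your closing paragraph does exactly the same, so your treatment coincides with the paper's. Your preceding sketch of the conjugation, microlocalization, paradifferential truncation at frequency $\sim\tau$, and restriction-theorem steps is a fair outline of how the estimate is actually established in \cite{Tataru}, but it is not carried out (nor needed) here, since the result is simply invoked as an external input.
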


Let $X=L^{\frac{2(d+1)}{d-1}}$ and $Y=\tau^{-\frac{1}{4}}H_{\tau}^{\frac{1}{2}}$, then the inequality \eqref{2.11} can be rewrite as
\begin{equation}\label{2.12}
\|e^{\tau \phi}u\|_{X \cap Y} \lesssim \|e^{\tau \phi } P(x,D)u\|_{X^\ast+Y^\ast}, ~ \tau >\tau_0.
\end{equation}
Note that $X \cap Y \hookrightarrow X$, which implies that $X^\ast \hookrightarrow X^\ast+Y^\ast$. From the inequality \eqref{2.12} we deduce that,
\begin{equation}\label{2.13}
\|e^{\tau \phi}u\|_{X} \lesssim \|e^{\tau \phi } P(x,D)u\|_{X^\ast}, ~ \tau >\tau_0.
\end{equation}

Thus, we deduce the following result:

\begin{proposition}\label{proposition2.3}
	Let $P_1(x,D)$ and $P_2(x,D)$ be two second-order hyperbolic operators with $C^2$ coefficients. Let $\phi$ be a strictly pseudo-convex function with respect to $P_1$ and $P_2$. Then the unique continuation property holds for the solution of the coupled system
	\begin{equation}\label{2.14}
	\begin{cases}
	P_1(x,D)u+V_1u=V_3v\\
	P_2(x,D)v+V_2v=V_3u,
	\end{cases}
	\end{equation}
	where $V_1$, $V_2$ and $V_3$ are elements of $L^{\frac{d+1}{2}}$.
\end{proposition}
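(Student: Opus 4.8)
The plan is to mimic the proof of Proposition \ref{proposition2.2}, replacing the elliptic $L^p$–$L^q$ Carleman estimate by the hyperbolic one \eqref{2.13} extracted from Koch–Tataru's Theorem \ref{theorem1}. First I would fix the point $x_0$ across whose level hypersurface $S=\{\phi(x)=\phi(x_0)\}$ we want to propagate vanishing, localize with a cutoff $\chi\in C_0^\infty$ equal to $1$ near $x_0$, and set $w_1=\chi u$, $w_2=\chi v$. Applying \eqref{2.13} with $P=P_1$ to $w_1$ and expanding $P_1(\chi u)=\chi P_1 u+[P_1,\chi]u$, then substituting the system \eqref{2.14} to replace $\chi P_1 u$ by $\chi(V_3 v-V_1 u)$, gives
\begin{align*}
\|e^{\tau\phi}\chi u\|_{X}\lesssim \|e^{\tau\phi}\chi V_3 v\|_{X^\ast}+\|e^{\tau\phi}\chi V_1 u\|_{X^\ast}+\|e^{\tau\phi}[P_1,\chi]u\|_{X^\ast},
\end{align*}
and symmetrically for $w_2$ with $P_2$, $V_2$, $V_3$. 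The crucial embedding to check here is that multiplication by an $L^{\frac{d+1}{2}}$ potential maps $X=L^{\frac{2(d+1)}{d-1}}$ into $X^\ast=L^{\frac{2(d+1)}{d+3}}$, which is exactly Hölder's inequality since $\frac{2}{d+1}+\frac{d-1}{2(d+1)}=\frac{d+3}{2(d+1)}$; this is the analogue of the observation $L^{d/2}\cdot L^{\frac{2d}{d-2}}\subset L^{\frac{2d}{d+2}}$ used in Proposition \ref{proposition2.2}.

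Next I would add the two estimates and, choosing $\chi$ with support small enough that $\|V_i\|_{L^{\frac{d+1}{2}}(\supp\chi)}$ is small, absorb the terms $\|e^{\tau\phi}\chi u\|_X$ and $\|e^{\tau\phi}\chi v\|_X$ appearing on the right-hand side into the left, leaving
\begin{align*}
\|e^{\tau\phi}\chi u\|_{X}+\|e^{\tau\phi}\chi v\|_{X}\lesssim \|e^{\tau\phi}[P_1,\chi]u\|_{X^\ast}+\|e^{\tau\phi}[P_2,\chi]v\|_{X^\ast}.
\end{align*}
The commutators $[P_i,\chi]$ are first-order operators supported in the region $\overline{\Omega_0\setminus\Omega_1}$ where, by the geometric arrangement $\{x\neq x_0,\ \phi(x)\le\phi(x_0)\}\subset S_-$ together with the hypothesis that $u,v$ vanish on $S_-$, one has $\phi\ge c>\phi(x_0)$ for some $c$; hence the right-hand side is $O(e^{-\tau c'})$ (after normalizing $\phi(x_0)=0$) while on a fixed small neighborhood of $x_0$ the left-hand side is bounded below by $e^{-\tau\varepsilon}(\|u\|_X+\|v\|_X)$ on $\{\phi<\varepsilon\}$ for any $\varepsilon<c'$. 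Letting $\tau\to\infty$ forces $u=v=0$ near $x_0$, which is the desired unique continuation across $S$.

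Two points deserve care. First, one must make sure the regularity assumed of $u,v$ (here finite energy, so $H^1$ locally) is enough to justify applying \eqref{2.13} to $\chi u$; since \eqref{2.13} is stated for compactly supported functions and the left norm $X$ controls $u$ via Sobolev embedding while the commutator terms only require $u\in H^1$, a routine density/approximation argument closes this gap, exactly as in \cite{David} and \cite{Tataru}. Second, one should note that Theorem \ref{theorem1} gives a local statement near $x_0$, so the global unique continuation in Assumption \ref{assumption1.3} is obtained afterward by the standard connectedness/foliation argument: propagate the vanishing along a continuous family of pseudo-convex hypersurfaces sweeping out $\Omega\times(0,T)$. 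The main obstacle — and the only genuinely new ingredient relative to the single-equation case — is the absorption step: it works precisely because the coupling term $V_3v$ enters $X^\ast$ with the \emph{same} small factor $\|V_3\|_{L^{\frac{d+1}{2}}(\supp\chi)}$ as the diagonal potentials, so shrinking $\supp\chi$ simultaneously tames all of $V_1,V_2,V_3$; this is what makes the coupled system behave, for the purposes of the Carleman argument, like two decoupled equations.
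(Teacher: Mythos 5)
Your argument is correct and is precisely the route the paper intends: the paper offers no separate proof of Proposition \ref{proposition2.3} beyond reducing Koch--Tataru's estimate \eqref{2.11} to the form \eqref{2.13} and invoking the same localization-and-absorption scheme as Proposition \ref{proposition2.2}, with the H\"older arithmetic $L^{(d+1)/2}\cdot L^{2(d+1)/(d-1)}\subset L^{2(d+1)/(d+3)}=X^{\ast}$ exactly as you verify. The one detail to tidy is the sign convention: \eqref{2.13} carries the weight $e^{+\tau\phi}$, so on the commutator support where $\phi\geq c>0$ that weight is exponentially \emph{large}, and you must either run the argument with $\phi$ replaced by $-\phi$ (oriented so the weight decays on $\supp[P_i,\chi]$) or restate \eqref{2.13} with $e^{-\tau\phi}$ as in Proposition \ref{proposition2.2} --- a bookkeeping fix, not a gap.
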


\section{Well-Posedness}
\setcounter{equation}{0}

We consider the weak phase space
\begin{equation*}
\mathcal{H} = H_{0}^{1}(\Omega) \times H_{0}^{1}(\Omega) \times  L^{2}(\Omega) \times  L^{2}(\Omega),
\end{equation*}
which is endowed with the inner product
\begin{equation*}
\langle (u_{1}, u_{2}, u_{3}, u_{4}), (v_{1}, v_{2}, v_{3}, v_{4}) \rangle_{\mathcal{H}} = \int_{\Omega} \nabla u_{1}^{\top}\cdot K(x) \cdot \nabla v_{1}+\nabla u_{2}^{\top}\cdot K(x) \cdot \nabla v_{2} + \rho u_{3}v_{3}+\rho u_{4}v_{4} \,dx.
\end{equation*}
Denoting $W(t)=(u,v,u_t,v_t)$ we may rewrite problem \eqref{1.1} as the following Cauchy problem in $\mathcal{H}$
\begin{equation}\label{3.1}
\left\{
\begin{aligned}
\frac{\partial W}{\partial t}(t) = {} & \mathcal{A}W(t) + \mathcal{F}(W(t)) \\
 W(0)={} & (u_{0}, v_{0}, u_{1}, v_{1}),
\end{aligned}
\right.
\end{equation}
where the linear unbounded operator $\mathcal{A}: D(\mathcal{A}) \rightarrow \mathcal{H}$ is given by
\begin{equation}\label{3.2}
\mathcal{A}=\left(
\begin{array}{cccc}
0 & 0 &I&0 \\
0 & 0 & 0&I \\
\frac{1}{\rho}\operatorname{div}(K(x) \nabla (\cdot)) & 0 & \frac{1}{\rho}\operatorname{div}(a(x) \nabla (\cdot)) &0\\
0 &\frac{1}{\rho}\operatorname{div}(K(x) \nabla (\cdot)) & 0&\frac{1}{\rho}\operatorname{div}(b(x) \nabla (\cdot))
\end{array}
\right),\end{equation}
that is,
\begin{eqnarray}\label{3.3}
\mathcal{A}(u,v,w,z) = \left( w,z,\frac{1}{\rho}\operatorname{div}(K(x) \nabla u+a(x) \nabla w), \frac{1}{\rho} \operatorname{div}(K(x)\nabla v +b(x) \nabla z)\right),
\end{eqnarray}
with domain
\begin{equation}\label{3.4}
D(\mathcal{A}) = \{ (u,v,x,z) \in \mathcal{H} : w,z \in H_{0}^{1}(\Omega), \operatorname{div}(K(x) \nabla u + a(x)\nabla w), \operatorname{div}(K(x) \nabla v + a(x)\nabla z) \in L^{2}(\Omega)\}
\end{equation}
and $\mathcal{F}: \mathcal{H} \rightarrow \mathcal{H}$ is the nonlinear operator
\begin{equation}\label{3.5}
\mathcal{F}(u,v, w, z) = \left(0, 0, -\frac{1}{\rho}v^2u, -\frac{1}{\rho}u^2v\right).
\end{equation}
Now, we are in conditions to state the well-posedness result for problem \eqref{3.1}, which ensures that problem \eqref{1.1} is globally well-posed.

\begin{theorem}[Global Well-Posedness]\label{theorem2}
	Assume that the hypotheses on $\rho$ and $K$ are fullfiled and the initial data $(u_{0}, v_0, u_{1},  v_{1}) \in \mathcal{H}$. Then problem \eqref{3.1} possesses a unique  mild solution $W \in C([0,T]; \mathcal{H})$. Moreover, if $(u_{0}, v_{0}, u_{1}, v_{1}) \in D(\mathcal{A})$, then the solution is regular.
\end{theorem}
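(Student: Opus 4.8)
The plan is to rewrite \eqref{3.1} as a semilinear Cauchy problem whose linear part $\mathcal{A}$ generates a $C_0$-semigroup of contractions, and then to apply the classical theory of mild and classical solutions for semilinear equations, closing the argument with the energy dissipation identity to obtain global existence.

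The first step is to show, via the Lumer--Phillips theorem, that $\mathcal{A}$ is the infinitesimal generator of a $C_0$-semigroup of contractions on $\mathcal{H}$; for this one needs dissipativity, density of the domain, and the range condition. Dissipativity is a direct computation: for $W=(u,v,w,z)\in D(\mathcal{A})$, integrating by parts (all boundary terms vanish since $w,z\in H^1_0(\Omega)$) and using the symmetry of $K$,
\[
\langle \mathcal{A}W,W\rangle_{\mathcal{H}} = -\int_\Omega a(x)\,|\nabla w|^2\,dx-\int_\Omega b(x)\,|\nabla z|^2\,dx\le 0,
\]
because the terms $\int_\Omega \nabla w^{\top}\!\cdot K\cdot\nabla u$ and $\int_\Omega \nabla z^{\top}\!\cdot K\cdot\nabla v$ coming from the first two rows of $\mathcal{A}$ cancel against those coming from the last two; density is immediate since $[C_0^\infty(\Omega)]^4\subset D(\mathcal{A})$. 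For the range condition, given $F=(f_1,f_2,f_3,f_4)\in\mathcal{H}$ the equation $(I-\mathcal{A})W=F$ forces $w=u-f_1$, $z=v-f_2$, and reduces to the elliptic system
\[
\rho\,u-\operatorname{div}\!\big(K(x)\nabla u\big)-\operatorname{div}\!\big(a(x)\nabla u\big)=\rho f_3+\rho f_1-\operatorname{div}\!\big(a(x)\nabla f_1\big)\in H^{-1}(\Omega),
\]
together with the analogous equation for $v$. The bilinear form $B(u,\varphi)=\int_\Omega \rho\,u\varphi + \nabla u^{\top}\!\cdot K\cdot\nabla\varphi + a\,\nabla u\cdot\nabla\varphi\,dx$ on $H^1_0(\Omega)$ is bounded and, by uniform ellipticity of $K$, $a\ge0$ and $\rho\ge\alpha_0$, coercive, so Lax--Milgram gives $u,v\in H^1_0(\Omega)$; then $\operatorname{div}(K\nabla u+a\nabla w)=\rho(u-w)-\rho f_3\in L^2(\Omega)$, hence $W\in D(\mathcal{A})$. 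Thus $\mathcal{A}$ generates $\{e^{t\mathcal{A}}\}_{t\ge0}$ on $\mathcal{H}$.

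Next I would check that $\mathcal{F}$ in \eqref{3.5} is locally Lipschitz (indeed $C^1$) on $\mathcal{H}$: since $d\le3$, $H^1_0(\Omega)\hookrightarrow L^6(\Omega)$, so the cubic maps $(u,v)\mapsto v^2u$ and $(u,v)\mapsto u^2v$ send $H^1_0\times H^1_0$ into $L^2$ with, e.g.,
\[
\|v^2u-\tilde v^2\tilde u\|_{L^2}\le \|v\|_{L^6}^2\|u-\tilde u\|_{L^6}+\big(\|v\|_{L^6}+\|\tilde v\|_{L^6}\big)\|\tilde u\|_{L^6}\|v-\tilde v\|_{L^6}.
\]
The standard semilinear semigroup theory then yields $T_{\max}\in(0,+\infty]$ and a unique mild solution $W\in C([0,T_{\max});\mathcal{H})$ of $W(t)=e^{t\mathcal{A}}W(0)+\int_0^t e^{(t-s)\mathcal{A}}\mathcal{F}(W(s))\,ds$, which when $W(0)\in D(\mathcal{A})$ is a classical (regular) solution in $C^1([0,T_{\max});\mathcal{H})\cap C([0,T_{\max});D(\mathcal{A}))$. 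To upgrade to global existence, pairing the two equations of \eqref{1.1} with $u_t$ and $v_t$ gives, for regular solutions, $\tfrac{d}{dt}E_{u,v}(t)=-\int_\Omega a|\nabla u_t|^2-\int_\Omega b|\nabla v_t|^2\le0$; by density of $D(\mathcal{A})$ and continuity of $E_{u,v}(\cdot)$ on $\mathcal{H}$ this persists for mild solutions, so $E_{u,v}(t)\le E_{u,v}(0)$. Since the coupling term $(uv)^2$ in \eqref{1.5} is nonnegative and $K$ is uniformly elliptic, $\|W(t)\|_{\mathcal{H}}^2\le 2E_{u,v}(t)\le2E_{u,v}(0)$, so the solution cannot blow up in $\mathcal{H}$, forcing $T_{\max}=+\infty$ and giving well-posedness on every $[0,T]$.

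The genuinely technical point is the range condition: because the Kelvin--Voigt operators $\operatorname{div}(a\nabla\cdot)$ and $\operatorname{div}(b\nabla\cdot)$ are unbounded, the resolvent problem is non-self-adjoint and one must carefully recover the $L^2$-membership that places $W$ in the implicitly defined domain $D(\mathcal{A})$; a secondary subtlety is propagating the energy identity from strong to mild solutions, which is precisely what makes the sign of the nonlinear coupling term decisive for global existence. Everything else is routine.
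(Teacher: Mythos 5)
Your proposal follows essentially the same route as the paper: dissipativity of $\mathcal{A}$, the range condition for $I-\mathcal{A}$ via Lax--Milgram, local Lipschitz continuity of $\mathcal{F}$ through $H_0^1(\Omega)\hookrightarrow L^6(\Omega)$ for $d\le 3$, the standard semilinear semigroup theory for the mild solution and its regularity for data in $D(\mathcal{A})$, and globality from the energy dissipation identity together with the nonnegativity of the coupling term $(uv)^2$ in $E_{u,v}$.

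The one step that does not survive scrutiny is your justification of density of the domain. It is not true in general that $[C_0^\infty(\Omega)]^4\subset D(\mathcal{A})$: membership in $D(\mathcal{A})$ requires $\operatorname{div}\big(K(x)\nabla u + a(x)\nabla w\big)\in L^2(\Omega)$, and since $a$ is only assumed to lie in $L^\infty(\Omega)\cap C^0(\overline\omega)$ (it may be discontinuous in $\Omega\setminus\overline\omega$), the distribution $\operatorname{div}(a\nabla w)$ need not be a function even for $w\in C_0^\infty(\Omega)$ --- this is exactly the low regularity of the Kelvin--Voigt operator that you yourself flag at the end. The gap is harmless because density is not needed as an independent input: once $\mathcal{A}$ is dissipative and $I-\mathcal{A}$ is surjective on the Hilbert space $\mathcal{H}$, the domain $D(\mathcal{A})$ is automatically dense, which is precisely how the paper proceeds (invoking Theorem 1.4.6 of Pazy before applying Lumer--Phillips). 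A second, purely cosmetic point: from the resolvent system one gets $\operatorname{div}(K\nabla u+a\nabla w)=\rho(w-f_3)=\rho(u-f_1-f_3)$ rather than $\rho(u-w)-\rho f_3$; the conclusion that this quantity lies in $L^2(\Omega)$ is unaffected. With the density step repaired as above, your argument coincides with the paper's.
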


%\noindent{\em Proof of Theorem \ref{theo 1}}.
\begin{proof}
	First of all the operator $\mathcal{A}: D(\mathcal{A}) \subset  \mathcal{H} \rightarrow \mathcal{H}$ defined by \ref{3.4} and \eqref{3.5} generates a $C_{0}$-semigroup of contractions $e^{\mathcal{A}t}$ on the energy space $\mathcal{H}$. Indeed, it is easy to see that for all $(u, v, w, z) \in D(\mathcal{A})$, we have
	\begin{eqnarray}\label{3.6}
	(\mathcal{A}U,U)=-\int_{\Omega}a(x)|\nabla w|^2+b(x)|\nabla z|^2 \, dx \leq 0,
	\end{eqnarray}
	which show that the operator $\mathcal{A}$ is dissipative. Next, for any given $(p,q,r,s) \in \mathcal{H}$, we solve the equation $(I-\mathcal{A})(u, v, w, z)=(p, q, r, s)$ which is recast on the following way
	\begin{equation}\label{3.7}
	\begin{cases}
	u-w=p,\\
	v-z=q,\\
	w-\frac{1}{\rho}\operatorname{div}(K(x)\nabla u+a(x)\nabla w)=r,\\
	z-\frac{1}{\rho}\operatorname{div}(K(x)\nabla v+b(x)\nabla z)=s,
	\end{cases}
	\end{equation}
	From \eqref{3.7} we obtain%%%%%%%%%%
    \begin{equation}\label{3.8}
	\left\{\begin{aligned}
	w={} & u-p,\\
	z={} &v-q.
	\end{aligned}\right.
	\end{equation}
	Substituting the equations of \eqref{3.8} in the last two equations of \eqref{3.7}, we obtain
	\begin{equation}\label{3.9}
	\left\{\begin{aligned}
	u-\frac{1}{\rho}\operatorname{div}(K(x)\nabla u+a(x)\nabla                    u)= {}&-\frac{1}{\rho}\operatorname{div}(a(x)\nabla p)+p+r,  \\
	v-\frac{1}{\rho}\operatorname{div}(K(x)\nabla v+a(x)\nabla v) ={}&-\frac{1}{\rho}\operatorname{div}(b(x) \nabla q)+q+s.
	\end{aligned}\right.
	\end{equation}
	Equivalently,
	\begin{equation}\label{3.10}
	\left\{\begin{aligned}
	\rho u-\operatorname{div}(K(x)\nabla u+a(x)\nabla u)={}&-\operatorname{div}(a(x)\nabla p)+\rho (p+ r),  \\
	\rho v-\operatorname{div}(K(x)\nabla v+b(x)\nabla v)={}&-\operatorname{div}(b(x) \nabla q)+\rho( q+ s).
	\end{aligned}\right.
	\end{equation}
	Define
	$$a: H_{0}^{1}(\Omega) \times H_{0}^{1}(\Omega) \times H_{0}^{1}(\Omega) \times H_{0}^{1}(\Omega) \longrightarrow \mathbb{R}$$
	given by
	\begin{equation}
	\begin{aligned}\label{3.11}
	a((u_1,v_1),(u_2,v_2))= {} &\int_{\Omega} \rho u_1u_2+ (\nabla u_1)^{ \top } K(x) \nabla u_2 +(\nabla u_1)^{ \top } a(x) \nabla u_2 \, dx \\
	& {} + {} \int_{\Omega} \rho v_1v_2+ (\nabla v_1)^{ \top } K(x) \nabla v_2 +(\nabla v_1)^{ \top } b(x) \nabla v_2 \, dx.
    \end{aligned}
    \end{equation}
	Then, $a(\cdot,\cdot)$ is bilinear, continuous and coercive. Thank's to the Lax-Milgram Theorem, given $$(a_0,b_0)=(-\operatorname{div}(a(x)\nabla p)+\rho(p+r), -\operatorname{div}(b(x) \nabla q)+\rho (q+  s) \in H^{-1}(\Omega)\times H^{-1}(\Omega),$$
	there exists a unique $(u,v) \in H_{0}^{1}(\Omega) \times H_{0}^{1}(\Omega)$, such that
	\begin{equation*}
	a((u,v),(x,y))=\langle a_0,x \rangle_{H^{-1}(\Omega),H_{0}^{1}(\Omega)}+\langle b_0,y \rangle_{H^{-1}(\Omega),H_{0}^{1}(\Omega)} \hbox{ for all } (x,y) \in H_{0}^{1}(\Omega)\times H_{0}^{1}(\Omega).
	\end{equation*}
	Taking $x \in C_{0}^{\infty}(\Omega)$ and $y=0$ we deduce that
	\begin{equation*}
	\rho u-\operatorname{div}(K(x)\nabla u+a(x)\nabla u)=a_0 \hbox{ in } \mathcal{D}'(\Omega)
	\end{equation*}
	and consequently,
		\begin{equation*}
	\rho u-\operatorname{div}(K(x)\nabla u+a(x)\nabla u)=a_0 \hbox{ in } H^{-1}(\Omega).
	\end{equation*}
	Proceeding in the same way, we deduce that
	\begin{equation*}
	\rho v - \operatorname{div} ( K(x)\nabla v+b(x) \nabla v)=b_0 \hbox{ in } H^{-1}(\Omega).
	\end{equation*}
	Define,
	 \begin{equation}\label{3.12}
	\left\{\begin{aligned}
	w ={}& {}  u-p \in H_{0}^{1}(\Omega),\\
	z ={}& {} v-q \in H_{0}^{1}.
	\end{aligned}\right.
	\end{equation}
	Using equations \eqref{3.12} we deduce that $(u, v, w, z) \in D (A)$, since
	 \begin{equation}\label{3.13}
	\left\{\begin{aligned}
	\frac{1}{\rho} \operatorname{div}(K(x)\nabla u+a(x)\nabla w)={}&u-p-r \in L^2(\Omega),\\
	\frac{1}{\rho} \operatorname{div}(K(x)\nabla v+b(x)\nabla z)={}&v-q-s \in L^2(\Omega),\\
	\end{aligned}\right.
	\end{equation}
	which gives us the desired solution. By the Theorem $1.4.6$ in Pazy's book \cite{pazy} we obtain $\overline{D(\mathcal{A})}=\mathcal{H}$. Thus, $\mathcal{A}$ generates a $C_0$ semigroup of contractions on $H$ by Lumer-Phillips Theorem.

	Moreover the nonlinear operator $\mathcal{F}: \mathcal{H} \rightarrow \mathcal{H}$ given in \eqref{3.5} is a locally Lipschitz continuous operator.  Given a bounded set $B$ in $\mathcal{H}$, there exists $R>0$ such that $\|U\|\leq R$ for all $U \in B$. Given $U=(u_{1},u_{2},u_{3},u_{4})$ and $V=(v_{1},v_{2},v_{3},v_{4})$ elements of $B$, we have that
	\begin{align}\label{3.14}
		\|\mathcal{F}U - \mathcal{F}V\|_{\mathcal{H}}^{2}   = {} &  \left\|  \left(0, 0, -\frac{1}{\rho}(u_{2}^{2}u_{1}-v_{2}^{2}v_{1}), -\frac{1}{\rho}(u_{1}^{2}u_{2}-v_{1}^{2}v_2)\right)\right\|_{\mathcal{H}}^{2} \nonumber \\
		= {}\nonumber & \left\|\frac{1}{\rho}(v_{2}^{2}v_{1}-u_{2}^{2}u_{1})\right\|_{L^2(\Omega)}^{2}+\left\| \frac{1}{\rho}(v_{1}^{2}v_2-u_{1}^{2}u_{2})\right\|_{L^2(\Omega)}^{2} \nonumber \\
		= {} & \left\|\frac{1}{\rho}(v_{2}^{2}v_{1}-u_{2}^{2}v_{1}+u_{2}^{2}v_{1}-u_{2}^{2}u_{1})\right\|_{L^2(\Omega)}^{2}+\left\| \frac{1}{\rho}(v_{1}^{2}v_2-u_{1}^{2}v_{2}+u_{1}^{2}v_{2}-u_{1}^{2}u_{2})\right\|_{L^2(\Omega)}^{2} \nonumber \\
		=  {} & \left\|\frac{1}{\rho}((v_{2}^{2}-u_{2}^{2})v_{1}+u_{2}^{2}(v_{1}-u_{1}))\right\|_{L^2(\Omega)}^{2}+\left\| \frac{1}{\rho}((v_{1}^{2}-u_{1}^{2})v_{2}+u_{1}^{2}(v_{2}-u_{2}))\right\|_{L^2(\Omega)}^{2}\nonumber \\
		 \lesssim {} & \left\|\frac{1}{\rho}(v_{2}^{2}-u_{2}^{2})v_{1}\right\|_{L^2(\Omega)}^{2}+\left\| \frac{1}{\rho}(v_{1}^{2}-u_{1}^{2})v_{2}\right\|_{L^2(\Omega)}^{2} \nonumber \\
		& {}+{}\left\|\frac{1}{\rho}u_{2}^{2}(v_{1}-u_{1})\right\|_{L^2(\Omega)}^{2}+\left\|\frac{1}{\rho}u_{1}^{2}(v_{2}-u_{2})\right\|_{L^2(\Omega)}^{2}.
		\end{align}
	Let's show that each of the\eqref{3.14} is limited by a positive constant that depends on $R$ multiplied by $\|U-V\|_{\mathcal{H}}$.  For this, let $i, j \in \{1,2\}$ and consider $i \neq j$, then from the H�lder's inequality we obtain
	\begin{align}\label{3.15}
	\left\| \frac{1}{\rho}(v_{i}^{2}-u_{i}^{2})v_{j} \right\|_{L^2(\Omega)}^{2}	 = {} &  \int_{\Omega}\frac{1}{\rho^2}|v_i-u_i|^2|v_i+u_i|^2|v_j|^2 \, dx \nonumber \\
	\lesssim {}  & \left( \int_{\Omega} |v_i-u_i|^6 \, dx \right)^{\frac{1}{3}}\left(\int_{\Omega}|v_i+u_i|^3|v_{j}|^3 \, dx  \right)^{\frac{2}{3}} \nonumber  \\
	 \lesssim {}  & \left( \int_{\Omega} |v_i-u_i|^6 \, dx \right)^{\frac{1}{3}} \left[ \left( \int_{\Omega} |v_j|^6 \, dx \right)^{\frac{1}{2}}\left( \int_{\Omega}|v_i+u_i|^6  \, dx \right)^{\frac{1}{2}}  \right]^{\frac{2}{3}} \nonumber  \\
	 \lesssim {}  & \|v_j\|_{L^6(\Omega)}^2\|v_i+u_i\|_{L^6(\Omega)}^{2}\|v_i-u_i\|_{L^6(\Omega)}^{2}  \nonumber \\
	 \lesssim  {} & \|v_j\|_{L^6(\Omega)}^2 \left( \|v_i\|_{L^6(\Omega)}^2+\|u_i\|_{L^6(\Omega)}^2 \right)\|v_i-u_i\|_{L^6(\Omega)}^{2}  \nonumber \\
	 \lesssim {} & \|v_j\|_{H_{0}^{1}(\Omega)}^2 \left( \|v_i\|_{H_{0}^{1}(\Omega)}^2+\|u_i\|_{H_{0}^{1}(\Omega)}^2 \right)\|v_i-u_i\|_{H_{0}^{1}(\Omega)}^{2}  \nonumber \\
	 \lesssim {}  &  2R^3\|U-V\|_{\mathcal{H}}^{2}.
	\end{align}
    Proceeding in the same way, we obtain
    \begin{align}\label{3.16}
    \left\|\frac{1}{\rho}u_{j}^{2}(v_i-u_i)  \right\|_{L^2(\Omega)}^{2} = {} & \int_{\Omega}\frac{1}{\rho^2}|u_{j}|^{4}|v_i-u_i|^2\, dx \nonumber  \\
    \lesssim {}  & \left(\int_{\Omega} |u_j|^6 \, dx \right)^{\frac{2}{3}} \left(\int_{\Omega}|v_i-u_i|^6 \, dx  \right)^{\frac{1}{3}} \nonumber  \\
   \lesssim {}  & \|u_{j}\|_{L^6(\Omega)}^{4}\|v_i-u_i\|_{L^6(\Omega)}^{2} \nonumber  \\
    \lesssim {}  & \|u_{j}\|_{H_{0}^{1}(\Omega)}^{4}\|v_i-u_i\|_{H_{0}^{1}(\Omega)}^{2} \nonumber \\
    \lesssim {}  & R^4 \|v_i-u_i\|_{H_{0}^{1}(\Omega)}^{2} \nonumber \\
   \lesssim {}  & R^4 \|U-V\|_{\mathcal{H}}^{2}.
    \end{align}
    From \eqref{3.14}, \eqref{3.15} and \eqref{3.16} we obtain
    \begin{equation*}
    \|\mathcal{F}U-\mathcal{F}V\|_{\mathcal{H}}^{2} \leq C(R) \|U-V\|_{\mathcal{H}}^{2},
    \end{equation*}
    which proves that $\mathcal{F}$ is locally Lipschitz. Hence by Theorem $6.1.4$ and $6.1.5$ in Pazy's book \cite{pazy}, the Cauchy problem \eqref{1.1} has a unique mild solution
	\begin{equation*}
	W(t) = e^{\mathcal{A}t}W(0) + \int_{0}^{t} e^{\mathcal{A}(t-s)}\mathcal{F}(W(s))ds, \hbox{ for all } t \in [0,T_{\max}).
	\end{equation*}
	Let us see that $T_{\max} = \infty$. Indeed, given the energy functional defined in \eqref{1.5}, it follows that
	\begin{equation*}
	\frac{d}{dt}E_{u,v}(t) = -\int_{\Omega} a(x)|\nabla u_{t}(x,t)|^{2}dx-\int_{\Omega} b(x)|\nabla v_{t}(x,t)|^{2}dx,
	\end{equation*}
	which shows that $E_{u,v}(t)$ is non-increasing with $E_{u,v}(t) \leq E_{u}(0)$ for all $t \in [0, T_{\rm max})$. On the other hand, we have that
	\begin{align*}
	E_{u,v}(t) \geq {} & \frac{1}{2} \int_{\Omega} \rho(x)|u_t(x,t)|^{2}+\rho(x)|v_t(x,t)|^{2} \, dx\\
	& {} +{} \frac{1}{2} \int_{\Omega} \nabla u(x,t)^{\top} \cdot K(x) \cdot \nabla u(x,t)+\nabla v(x,t)^{\top} \cdot K(x) \cdot \nabla v(x,t) \, dx \\
	= {} & \frac{1}{2}\|(u,v,u_t,v_t )\|_{\mathcal{H}}^{2}.
	\end{align*}
	Thus,
	\begin{equation*}
	\frac{1}{2} \|(u, v, u_t, v_{t})\|_{\mathcal{H}}^{2} \leq E_{u}(t) \leq E_{u}(0),  \hbox{ for all } t \in [0, T_{\rm max}).
	\end{equation*}
	Therefore the local solutions cannot blow-up in finite time and it follows that $T_{\max}= \infty $.\end{proof}

\section{Exponential Stability}
\setcounter{equation}{0}

In this section we give the proofs of the main result of the present article which reads as follows
\begin{theorem}\label{theorem3} Under Assumptions presented above, given $R>0$, there exist constants $C$ and $\gamma$ such that the following inequality holds
	\begin{eqnarray}\label{4.1}
	E_{u,v}(t) \leq C e^{-\gamma t} E_{u,v}(0), ~t>0,
	\end{eqnarray}
	for all weak solution to problem \eqref{1.1} provided that $E_{u,v}(0)\leq R$.
\end{theorem}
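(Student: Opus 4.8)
\textbf{Step 1: reduction to an observability inequality.} The plan is to deduce \eqref{4.1} from the inverse inequality \eqref{1.7} and then to establish \eqref{1.7} by a compactness--uniqueness argument in the spirit of Dehman, G\'erard and Lebeau \cite{Dehman,Dehman2}. From the dissipation law $\tfrac{d}{dt}E_{u,v}(t)=-D(t)$, with $D(t):=\int_\Omega a(x)|\nabla u_t|^2+b(x)|\nabla v_t|^2\,dx\ge 0$, we get $E_{u,v}(0)-E_{u,v}(T)=\int_0^T D\,dt$; so if \eqref{1.7} holds with $C=C(T,R)$ for every solution with $E_{u,v}(0)\le R$, then $E_{u,v}(T)\le(1-\tfrac1C)E_{u,v}(0)$, and since $E_{u,v}$ is non-increasing (hence stays $\le R$) and \eqref{1.7} is invariant under time translation, iterating on $[kT,(k+1)T]$ gives $E_{u,v}(kT)\le(1-\tfrac1C)^kE_{u,v}(0)$, whence \eqref{4.1} with $\gamma=-\tfrac1T\log(1-\tfrac1C)>0$ (enlarging $C$ to cover $t\in[0,T]$). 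Thus it suffices to prove \eqref{1.7} for a fixed $T>T_0$ chosen large enough.

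\textbf{Step 2: the contradiction sequence and compactness.} Suppose \eqref{1.7} fails: there are weak solutions $(w^n,z^n)$ of \eqref{1.1} with $E_{w^n,z^n}(0)\le R$ and $\int_0^T D_n\,dt\le\tfrac1nE_{w^n,z^n}(0)$. Put $c_n:=\|(w_0^n,z_0^n,w_1^n,z_1^n)\|_{\mathcal H}\in(0,\sqrt{2R}]$ and rescale $(\widehat w^n,\widehat z^n):=(w^n,z^n)/c_n$; it solves \eqref{1.1} with the cubic terms multiplied by $c_n^2$, satisfies $\|(\widehat w_0^n,\widehat z_0^n,\widehat w_1^n,\widehat z_1^n)\|_{\mathcal H}=1$, and, since $E_{w^n,z^n}(0)=\tfrac12 c_n^2(1+O(c_n^2))$, has $\int_0^T\widehat D_n\,dt=c_n^{-2}\int_0^T D_n\,dt\to 0$. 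By the (rescaled) energy identity the sequence is bounded in $C([0,T];\mathcal H)$; along a subsequence $c_n\to c\in[0,\sqrt{2R}]$, $(\widehat w^n,\widehat z^n,\widehat w_t^n,\widehat z_t^n)\rightharpoonup(w,z,w_t,z_t)$ weak-$\ast$, and, since $d\le 3$ and $H^1(\Omega)$ embeds compactly in $L^p(\Omega)$ for $p<6$, Aubin--Lions gives $(\widehat w^n,\widehat z^n)\to(w,z)$ strongly in $C([0,T];L^p(\Omega)^2)$; hence $c_n^2(\widehat z^n)^2\widehat w^n\to c^2z^2w$ and $c_n^2(\widehat w^n)^2\widehat z^n\to c^2w^2z$ strongly in $L^\infty(0,T;L^r(\Omega))$ for some $r>\tfrac{2d}{d+2}$. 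The goal is to show $\int_\delta^{T-\delta}E_{\widehat w^n,\widehat z^n}(t)\,dt\to 0$ for some $\delta$, which will contradict $\|(\widehat w_0^n,\dots)\|_{\mathcal H}=1$.

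\textbf{Step 3: observation near $\partial\Omega$ and propagation of the defect measure.} Since $\int_0^T\widehat D_n\to 0$ and $a,b\ge a_0,b_0>0$ on $\omega$, $\nabla\widehat w_t^n,\nabla\widehat z_t^n\to 0$ in $L^2(\omega\times(0,T))$; as $\omega$ is a neighbourhood of $\partial\Omega$ and $\widehat w_t^n=\widehat z_t^n=0$ on $\partial\Omega$, Poincar\'e's inequality on $\omega$ gives \eqref{1.8}, so $w_t=z_t=0$ on $\omega\times(0,T)$. Writing the first line of \eqref{1.1} (not differentiated in $t$) as $\rho\widehat w_{tt}^n-\operatorname{div}(K\nabla\widehat w^n)=\operatorname{div}(a\nabla\widehat w_t^n)-c_n^2(\widehat z^n)^2\widehat w^n$, the first term tends to $0$ in $H^{-1}_{loc}$ (because $a\nabla\widehat w_t^n\to 0$ in $L^2$) and the second converges strongly in $H^{-1}_{loc}$, so $\rho\partial_{tt}^2(\widehat w^n-w)-\operatorname{div}(K\nabla(\widehat w^n-w))\to 0$ in $H^{-1}_{loc}(\Omega\times(0,T))$ --- this is \eqref{1.9} --- and similarly for $\widehat z^n-z$. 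Let $\mu,\nu$ be the microlocal defect measures (in the sense of \cite{Gerard}) of the $H^1_{loc}$-bounded, weakly null sequences $\widehat w^n-w$, $\widehat z^n-z$. By \eqref{1.9}, $\mu$ is carried by the characteristic set $\{\tau^2=\tfrac1\rho\xi^\top K\xi\}$ of $\rho\partial_{tt}^2-\operatorname{div}(K\nabla\cdot)$, on which $\tau$ stays bounded away from $0$; by the propagation theorem of Burq--G\'erard \cite{Burq-Gerard} --- valid up to $\partial\Omega$ precisely because $\omega$ is a full neighbourhood of the boundary obeying the G.C.C., as detailed in the Appendix --- $\mu$ is invariant under the bicharacteristic flow, whose $x$-projections are the unit-speed geodesics of $G=(K/\rho)^{-1}$. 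Since $\partial_t(\widehat w^n-w)=\widehat w_t^n-w_t\to 0$ in $L^2(\omega\times(0,T))$ and $\tau\neq 0$ on $\operatorname{supp}\mu$, $\mu$ vanishes over $\omega\times(0,T)$; by Assumption \ref{assumption1.2}, for $T$ large every geodesic meets $\omega$ in time $<T_0$, so every point of $\operatorname{supp}\mu$ is carried by the flow into the region over $\omega\times(0,T)$, whence $\mu\equiv 0$, and likewise $\nu\equiv 0$. Therefore $\widehat w^n\to w$, $\widehat z^n\to z$ strongly in $H^1_{loc}(\Omega\times(0,T))$ --- and, using the boundary version of the propagation, strongly in $L^2((\delta,T-\delta);H^1(\Omega))\cap H^1((\delta,T-\delta);L^2(\Omega))$ up to $\partial\Omega$.

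\textbf{Step 4: unique continuation kills the limit, conclusion, and main obstacle.} Passing to the limit in \eqref{1.1} (damping terms vanish; cubic terms converge strongly), $(w,z)$ solves $\rho w_{tt}-\operatorname{div}(K\nabla w)+c^2z^2w=0$, $\rho z_{tt}-\operatorname{div}(K\nabla z)+c^2w^2z=0$ on $\Omega\times(0,T)$ with $w_t=z_t=0$ on $\omega\times(0,T)$. Differentiating in $t$ and setting $U=w_t$, $V=z_t$, the pair $(U,V)$ solves $\rho U_{tt}-\operatorname{div}(K\nabla U)+V_1U=V_3V$, $\rho V_{tt}-\operatorname{div}(K\nabla V)+V_2V=V_3U$ with $V_1=c^2z^2$, $V_2=c^2w^2$, $V_3=-2c^2wz$, all in $L^\infty(0,T;L^3(\Omega))\subset L^\infty(0,T;L^{\frac{d+1}{2}}(\Omega))$ (recall $d\le 3$), and $U=V=0$ on $\omega\times(0,T)$; Assumption \ref{assumption1.3} (equivalently Proposition \ref{proposition2.3}) forces $U\equiv V\equiv 0$, so $w,z$ are time-independent. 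Testing the resulting identities $-\operatorname{div}(K\nabla w)+c^2z^2w=0$, $-\operatorname{div}(K\nabla z)+c^2w^2z=0$ (with $w,z\in H_0^1(\Omega)$) against $w$ and $z$ and adding gives $\int_\Omega(\nabla w)^\top K\nabla w+(\nabla z)^\top K\nabla z+2c^2w^2z^2\,dx=0$, hence $w\equiv z\equiv 0$. By Step 3 this yields $\int_\delta^{T-\delta}E_{\widehat w^n,\widehat z^n}(t)\,dt\to 0$; since $E_{\widehat w^n,\widehat z^n}$ is non-increasing and $E_{\widehat w^n,\widehat z^n}(0)-E_{\widehat w^n,\widehat z^n}(T)=\int_0^T\widehat D_n\to 0$, we have $E_{\widehat w^n,\widehat z^n}(t)=E_{\widehat w^n,\widehat z^n}(0)+o(1)$ uniformly on $[0,T]$, whence $(T-2\delta)E_{\widehat w^n,\widehat z^n}(0)\to 0$ and so $E_{\widehat w^n,\widehat z^n}(0)\to 0$, contradicting $E_{\widehat w^n,\widehat z^n}(0)\ge\tfrac12$. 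This proves \eqref{1.7}, hence \eqref{4.1}. The step I expect to be the main obstacle is Step 3: justifying \eqref{1.9} in spite of the loss of derivatives produced by the unbounded Kelvin--Voigt operators $\operatorname{div}(a\nabla\cdot),\operatorname{div}(b\nabla\cdot)$ (which forces one to work with the undifferentiated equation and with $a\nabla\widehat w_t^n\to 0$ in $L^2$), and propagating $\mu,\nu$ up to $\partial\Omega$ for the variable-coefficient operator $\rho\partial_{tt}^2-\operatorname{div}(K\nabla\cdot)$ --- the reason $\omega$ is taken to be a neighbourhood of the whole boundary satisfying the G.C.C. The other essential ingredient is the unique continuation of Assumption \ref{assumption1.3}, applied to the pair $(w_t,z_t)$: it is the cubic structure of the nonlinearity that makes the differentiated system fit the required coupled form with $L^{\frac{d+1}{2}}$ potentials.
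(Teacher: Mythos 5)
Your overall strategy coincides with the paper's: reduction to the observability inequality \eqref{1.7} via the energy identity and the semigroup property, a contradiction/compactness argument, unique continuation (Assumption \ref{assumption1.3}) applied to the time-differentiated limit system to kill the limit, and propagation of microlocal defect measures under the G.C.C. to upgrade weak to strong convergence. Your single normalization by $c_n=\|(w_0^n,z_0^n,w_1^n,z_1^n)\|_{\mathcal H}$ is a mild streamlining of the paper's case split (case (i) $u,v\neq 0$ versus case (ii) $(u,v)=(0,0)$ with $\alpha=0$ or $\alpha>0$), and your explicit testing of the stationary equations against $w,z$ justifies a step the paper only asserts. The treatment of the velocities is also the same in substance: the paper attaches $L^2$ defect measures to $w_t^n,z_t^n$, shows $Pw_t^n\to 0$ in $H^{-1}_{loc}$, and concludes $w_t^n,z_t^n\to 0$ in $L^2(\Omega\times(0,T))$ by combining interior propagation with the strong convergence already known on $\omega$.

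The genuine gap is in how you recover the \emph{gradient} part of the energy at the end of Step 3. Interior propagation only yields strong convergence in $H^1_{loc}(\Omega\times(0,T))$, and your jump to strong convergence in $L^2((\delta,T-\delta);H^1(\Omega))$ ``up to $\partial\Omega$'' via ``the boundary version of the propagation'' is precisely the technicality the paper is structured to avoid: near $\partial\Omega$ the damping is active, so the sequence there solves a strongly damped (not free) wave equation, the standard boundary propagation theory for Dirichlet m.d.m.\ does not apply off the shelf, and the dissipation controls only $\nabla \widehat w_t^n$ on $\omega$, not $\nabla\widehat w^n$. The paper closes this step by an elementary multiplier identity instead: multiplying the $n$-th equation by $\theta(t)w^n$ (resp.\ $\theta(t)z^n$) with $\theta$ a cutoff and integrating by parts, as in \eqref{4.49}--\eqref{4.50}, expresses $\int_0^T\theta\int_\Omega(\nabla w^n)^\top K\nabla w^n$ through $\int\theta\rho|w_t^n|^2$, $\int\theta'\rho w_t^nw^n$, the nonlinear term, and $\int\theta\, a\nabla w_t^n\cdot\nabla w^n$, all of which vanish in the limit by the already-established convergences \eqref{4.27}, \eqref{4.33}--\eqref{4.35}, \eqref{4.43}, \eqref{4.47}--\eqref{4.48}. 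With that substitution (and the ensuing monotonicity argument $\int_\varepsilon^{T-\varepsilon}E\to 0\Rightarrow E(0)\to 0$, which you have), your proof closes without any boundary microlocal analysis.
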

\begin{remark}\label{remark3}	
	By standard density arguments it is enough to work with regular solutions  at   all times  since the decay rate estimate given in \eqref{4.1} can be recovered for weak solutions as well.
\end{remark}

%\begin{remark}
%The example considered in the appendix of the present article, for which Assumption 1.2 is %fulfilled, works only for $n \geq 2$. In the one-dimensional case, we should consider $\rho$ %constant and $K= I_{d}$.
%\end{remark}

%\medskip
%\begin{remark}
%In view of the convergence given in (\ref{convI}), (\ref{coonvII}) and (\ref{convIII}) it is enough to work with regular solutions all time since the decay %rate estimate given in (\ref{unif decay}) can be recovered for weak solutions as well.
%\end{remark}

In order to prove the Theorem \eqref{theorem3} and having in mind that problem \eqref{1.1} satisfies the semigroup property, so, in view of the identity of the energy associated to problem \eqref{1.1}, namely,
\begin{eqnarray}\label{4.2}
E_{u,v}(t_2) - E_{u,v}(t_1) &=& -\int_{t_1}^{t_2}\int_\Omega a(x) |\nabla u_t(x,t)|^2 \, dxdt \nonumber \\
&&-\int_{t_1}^{t_2}\int_\Omega b(x)|\nabla v_t(x,t)|^2\,dxdt, \hbox{ for all }0 \leq t_1 \leq t_2,
\end{eqnarray}
it is enough to prove that the following observability estimate holds:
\begin{lemma}\label{lemma4.1}
	For every $T > 0$ and every
	$R > 0$, there exists a constant $C > 0$ such that inequality
	\begin{eqnarray}\label{4.3}
	E_{u,v}(0) \leq C \int_0^T \int_\Omega a(x) |\nabla u_t(x,t)|^2+b(x)|\nabla v_t(x,t)|^2 \,dxdt,
	\end{eqnarray}
	holds for every regular solution $u$ of the damped problem \eqref{1.1} if the initial data satisfies
	\begin{eqnarray}\label{4.4}
	E_u(0)\leq R.
	\end{eqnarray}
\end{lemma}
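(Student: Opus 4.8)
The plan is to argue by contradiction, following the Dehman--Gérard--Lebeau compactness--uniqueness scheme adapted to the coupled system. Suppose the estimate \eqref{4.3} fails: then there is a sequence of regular solutions $(u^n,v^n)$ with $E_{u^n,v^n}(0)\le R$ such that
\begin{eqnarray*}
\alpha_n^2 := E_{u^n,v^n}(0) \geq n \int_0^T\int_\Omega a(x)|\nabla u_t^n|^2 + b(x)|\nabla v_t^n|^2\,dxdt.
\end{eqnarray*}
Since $E_{u^n,v^n}(0)\le R$ is bounded and, by \eqref{4.2}, $E_{u^n,v^n}(t)\le E_{u^n,v^n}(0)$ for all $t$, we may pass to a subsequence with $\alpha_n^2\to\alpha^2\in[0,R]$. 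The first dichotomy is on whether $\alpha=0$ or $\alpha>0$.

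\textbf{Case $\alpha>0$ (the genuinely hyperbolic regime).} Normalize by setting $w^n=u^n/\alpha_n$, $z^n=v^n/\alpha_n$; these solve a system of the form \eqref{1.1} but with the nonlinear terms carrying a factor $\alpha_n^2$ (so $\mathcal{F}(W^n)/\alpha_n$ is $O(\alpha_n^2)\to 0$ in the relevant space, using the Lipschitz/Sobolev bounds from the proof of Theorem~\ref{theorem2}), and they satisfy $E_{w^n,z^n}(0)=1$ together with
\begin{eqnarray*}
\int_0^T\int_\Omega a(x)|\nabla w_t^n|^2 + b(x)|\nabla z_t^n|^2\,dxdt \longrightarrow 0 .
\end{eqnarray*}
In particular $a(x)^{1/2}\nabla w_t^n\to 0$ and $b(x)^{1/2}\nabla z_t^n\to 0$ in $L^2$, so by Assumption~\ref{assumption1.1} we get $\nabla w_t^n\to 0$ and $\nabla z_t^n\to 0$ in $L^2(\omega\times(0,T))$, hence (Poincaré on $\omega$, boundary data) $w_t^n\to 0$ and $z_t^n\to 0$ in $L^2(\omega\times(0,T))$, which is precisely \eqref{1.8}. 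Next one differentiates the equations in $t$: $p^n:=w_t^n$, $q^n:=z_t^n$ satisfy
\begin{eqnarray*}
\rho(x)\partial_t^2 p^n - \operatorname{div}(K(x)\nabla p^n) = \operatorname{div}(a(x)\nabla p_t^n) + \text{l.o.t.} \longrightarrow 0 \ \text{in}\ H^{-1}_{loc},
\end{eqnarray*}
and likewise for $q^n$, which is \eqref{1.9}; the right-hand side goes to zero because $a^{1/2}\nabla w_t^n\to 0$ in $L^2$ gives $a\nabla w_{tt}^n\to 0$ in $H^{-1}$ after one more time derivative, modulo care with the time regularity (this is where the regular-solutions reduction of Remark~\ref{remark3} is used, and one works on a slightly enlarged time interval so that energy estimates give uniform bounds for $p^n,q^n$ in $L^\infty(H_0^1)\cap W^{1,\infty}(L^2)$). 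One now introduces the microlocal defect measures $\mu_1,\mu_2$ associated (after extracting a further subsequence) to the bounded sequences $(p^n)$, $(q^n)$ in $H^1_{loc}$. By the propagation theorem for the operator $\rho\partial_t^2-\operatorname{div}(K\nabla\cdot)$ of Burq--Gérard \cite{Burq-Gerard} (applicable since $\omega$ is a full boundary neighborhood satisfying the GCC, so no boundary propagation subtleties arise), $\mu_1$ and $\mu_2$ are invariant along the bicharacteristic flow, equivalently along the geodesic flow of the metric $G=(K/\rho)^{-1}$ lifted to the cosphere bundle. Since \eqref{1.8} forces $\mu_1$ and $\mu_2$ to vanish over $\omega\times(0,T)$, and since by Assumption~\ref{assumption1.2} every geodesic meets $\omega$ within time $T_0<T$, invariance propagates the vanishing to all of $\Omega\times(0,T)$: hence $\mu_1=\mu_2=0$, i.e. $p^n=w_t^n\to 0$ and $q^n=z_t^n\to 0$ strongly in $L^2_{loc}(\Omega\times(0,T))$. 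Returning to the energy identity and the equations, $w^n\to w$, $z^n\to z$ in a sense making the limit $(w,z)$ a solution of the \emph{undamped} linear (with $L^\infty$ potentials coming from the limit of $\alpha_n^{-2}\cdot$nonlinearity, which actually vanish here but in general are in $L^\infty(L^{(d+1)/2})$) coupled system of the form \eqref{1.4} with $w_t=z_t=0$, hence $w,z$ time-independent and then, testing with themselves, $\nabla w=\nabla z=0$, so $w=z=0$. But strong convergence of $(w_t^n,z_t^n)$ to $0$ together with the equations forces $E_{w^n,z^n}(0)\to 0$, contradicting $E_{w^n,z^n}(0)=1$.

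\textbf{Case $\alpha=0$.} Here $\alpha_n\to 0$, so the solutions are small and one renormalizes the same way, $w^n=u^n/\alpha_n$; now the nonlinear terms, after division by $\alpha_n$, become $\alpha_n^2 v^{n2}u^n/\alpha_n$-type expressions which, written in terms of $w^n,z^n$, are of the form $\alpha_n^2 (z^n)^2 w^n$, still $\to 0$; but the real point is that the limiting system may retain potentials $V_i\in L^\infty(]0,T[,L^{(d+1)/2}(\Omega))$ built from weak-$*$ limits of $\alpha_n\,(z^n)^2$ etc. (each $w^n,z^n$ bounded in $L^\infty(H_0^1)\hookrightarrow L^\infty(L^6)$ for $d\le 3$, so $(z^n)^2$ bounded in $L^\infty(L^3)\subset L^\infty(L^{(d+1)/2})$), and a limiting coupled system exactly of the shape in Assumption~\ref{assumption1.3} with $u=v=0$ on $\omega$ emerges from \eqref{1.8}. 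Assumption~\ref{assumption1.3} (whose hypotheses are guaranteed in concrete cases by Propositions~\ref{proposition2.2} and \ref{proposition2.3}) then forces the limit to be trivial, and again one concludes $E_{w^n,z^n}(0)\to 0$, contradicting normalization.

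\textbf{Main obstacle.} The delicate point is the passage from \eqref{1.8} to strong $L^2_{loc}$-convergence of $(w_t^n,z_t^n)$ via microlocal defect measures: one must verify that the sequences $p^n=w_t^n$, $q^n=z_t^n$ are genuinely bounded in $H^1_{loc}(\Omega\times(0,T))$ and satisfy \eqref{1.9} with errors tending to $0$ in $H^{-1}_{loc}$ despite the \emph{unbounded} Kelvin--Voigt operators $\operatorname{div}(a(x)\nabla\cdot)$, $\operatorname{div}(b(x)\nabla\cdot)$ — this is exactly where the localization of $a,b$ near the boundary, the continuity hypothesis \eqref{1.3}, and the regular-solution reduction all have to be combined, and where the argument differs most from the classical frictional-damping case; and then, that the propagation theorem of \cite{Burq-Gerard} applies verbatim because $\omega$ being a neighborhood of the whole of $\partial\Omega$ removes any need to propagate microlocal mass up to or along the boundary. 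Once the measures are shown to propagate along the $G$-geodesic flow, the GCC of Assumption~\ref{assumption1.2} closes the argument mechanically. The separate treatment of the coupling term $V_3$ in the limit systems is handled uniformly by the two-equation Carleman/unique-continuation results of Section~2, so no new difficulty arises from the coupling itself.
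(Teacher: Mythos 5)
Your overall strategy --- contradiction, normalization by $\alpha_n=E_{u^n,v^n}(0)^{1/2}$, strong convergence of the velocities on $\omega$, unique continuation for the limit system, and propagation of microlocal defect measures to upgrade weak to strong convergence --- is the same as the paper's (the paper's primary dichotomy is on whether the weak limit $(u,v)$ vanishes, with the split on $\alpha$ occurring only inside the case $(u,v)=(0,0)$; your dichotomy on $\alpha$ alone is a legitimate reorganization). However, there is a genuine gap in the \emph{order} of your steps in the case $\alpha>0$. You run the m.d.m.\ propagation argument first, asserting that $p^n=w_t^n$ satisfies $\rho\partial_t^2p^n-\operatorname{div}(K\nabla p^n)\to0$ in $H^{-1}_{loc}$ with the time-differentiated nonlinearity dismissed as ``l.o.t.''. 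But those terms are $\partial_t\bigl[\alpha_n^2(z^n)^2w^n\bigr]$, and $\alpha_n^2(z^n)^2w^n$ converges weakly in $L^2$ to $\alpha^2z^2w$, which is \emph{not} known to vanish at that stage (your parenthetical claim that these terms are $O(\alpha_n^2)\to0$ is false precisely when $\alpha>0$). Without knowing $w=z=0$, the hypothesis of the propagation theorem fails for $w_t^n$ itself; and if one instead applies the m.d.m.\ machinery to $w_t^n-w_t$, the conclusion is only that $w_t^n\to w_t$ strongly in $L^2_{loc}$ --- a microlocal defect measure can never show that the weak limit is zero, only that convergence is strong. Yet your derivation of $w=z=0$ (``$w_t=z_t=0$, hence time-independent, then test with themselves'') relies on $w_t=z_t=0$ holding in all of $\Omega$, which you obtained from the m.d.m.\ step. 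The argument is therefore circular. The paper breaks the circle by first passing to the limit in the equations, differentiating in time, and invoking Assumption \ref{assumption1.3} on the resulting coupled system \eqref{4.42} (where $w_t=z_t=0$ is known only on $\omega$) to conclude $w=z=0$; only \emph{then} does the weak $L^2$-convergence \eqref{4.43} of the nonlinear terms to zero become available, giving \eqref{4.44}--\eqref{4.45} and allowing the m.d.m.\ propagation.

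Two further points. In your case $\alpha=0$ you stop at ``the limit is trivial, hence $E_{w^n,z^n}(0)\to0$'': vanishing of the weak limit does not imply convergence of the energy, so the m.d.m.\ propagation step (and the subsequent cut-off multiplier identity \eqref{4.49}--\eqref{4.50}, which is what actually kills the potential-energy part $\int(\nabla w^n)^\top K\nabla w^n$) is needed in this case too, not only when $\alpha>0$; the paper performs both after the two subcases have been merged. Also, your description of the limiting potentials is inverted: when $\alpha=0$ the potentials $\alpha_n^2(z^n)^2=(v^n)^2$ tend to zero strongly and the limit system is the free wave system \eqref{4.38}, whereas it is the case $\alpha>0$ that produces the nontrivial potentials $V_1=\alpha^2z^2$, $V_2=\alpha^2w^2$, $V_3=-2\alpha^2zw$ requiring Assumption \ref{assumption1.3}. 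These are fixable, but as written the proof does not close.
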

\begin{proof}
	We argue by contradiction. Let us assume that \eqref{4.3} does not hold. Then, there exist a sequence $\{u^n,v^n\}$ of regular solutions to problem, according to Theorem \ref{theorem2}, such that the initial data satisfy
	\begin{eqnarray}\label{4.5}
	E_{u^n,v^n}(0)\leq L, \hbox{ for all } n\in \mathbb{N}, \hbox{ and for some }L>0,
	\end{eqnarray}
	and, in addition,
	\begin{eqnarray}\label{4.6}
	\lim_{n\rightarrow \infty} \frac{E_{u^n,v^n}(0)}{\int_0^T \int_\Omega a(x) |\nabla u_t^n(x,t)|^2+b(x) |\nabla v_t^n(x,t)|^2\,dxdt}=+\infty.
	\end{eqnarray}
	
	From \eqref{4.6} yields
	\begin{eqnarray}\label{4.7}
	\lim_{n\rightarrow \infty} \frac{\int_0^T \int_\Omega a(x) |\nabla u_t^n(x,t)|^2+b(x) |\nabla v_t^n(x,t)|^2\,dxdt}{E_{u^n,v^n}(0)}=0.
	\end{eqnarray}
	
	Taking \eqref{4.5} and \eqref{4.7} into account, we deduce
	\begin{eqnarray}\label{4.8}
	\lim_{n\rightarrow \infty}\int_0^T \int_\Omega a(x) |\nabla u_t^n(x,t)|^2+b(x) |\nabla v_t^n(x,t)|^2\,dxdt=0,
	\end{eqnarray}
	and since $a(x) \geq a_0 >0$ a.e. in $\omega$ from \eqref{4.8} we infer
	\begin{eqnarray}\label{4.9}
	\lim_{n\rightarrow \infty}\int_0^T \int_{\omega} |\nabla u_t^n(x,t)|^2\,dxdt=0,
	\end{eqnarray}
	and since that $b(x)\geq b_0 >0$ a.e. in $\Omega$ we obtain that
		\begin{eqnarray}\label{4.10}
	\lim_{n\rightarrow \infty}\int_0^T \int_{\omega}  |\nabla v_t^n(x,t)|^2\,dxdt=0.
	\end{eqnarray}
	From a variant of Poincar\'e's inequality, from \eqref{4.9} and \eqref{4.10}, it follows that
	\begin{eqnarray}\label{4.11}
	\lim_{n\rightarrow \infty}\int_0^T \int_{\omega}  |u_t^n(x,t)|^2\,dxdt=0
	\end{eqnarray}
	and
	\begin{eqnarray}\label{4.12}
	\lim_{n\rightarrow \infty}\int_0^T \int_{\omega}  |v_t^n(x,t)|^2\,dxdt=0.
	\end{eqnarray}
	
	Since $E_{u^,v^n}(t)$ is non-increasing and $E_{u^n,v^n}(0)$ remains bounded then, we obtain a subsequence, still denoted by $\{(u^n,v^n)\}$ which verifies
	\begin{eqnarray}\label{4.13}
	&&(u^n,v^n) \overset{\ast}{\rightharpoonup} (u,v) \hbox{ in } (L^\infty(0,T;H_0^1(\Omega)))^2,\\
	&&(u_t^n,u_{t}^{n}) \overset{\ast}{\rightharpoonup} (u_t,v_t) \hbox{ in } (L^\infty(0,T;L^2(\Omega)))^2. \label{4.14}
	\end{eqnarray}
	
	From standard compactness arguments, see \cite{Lions2} or \cite{Simon}, we deduce, for an eventual subsequence, that will be denote by the same notation, that
	\begin{eqnarray}\label{4.15}
	&& (u^n,v^n) \rightarrow (u,v) \hbox{ in } (L^\infty(0,T;L^q(\Omega)))^2, \hbox{ for all } q\in[2,6)
	\end{eqnarray}

	From \eqref{4.15} we infer
	\begin{equation*}
	(v^n)^2u^n \rightarrow v^2u \hbox{ a.e. in } \Omega \times (0,T)
	\end{equation*}
	and
	\begin{equation*}
	(u^n)^2v^n \rightarrow u^2v \hbox{ a.e. in } \Omega \times (0,T).
	\end{equation*}

	Moreover, $((v^n)^2u^n)$ and $((u^n)^2v^n)$ are bounded in $L^2(0,T;L^2(\Omega))$. Indeed,
    \begin{align*}
	\int_{0}^{T} \int_{\Omega} |(v^n)^2u^n|^2 \, dxdt = {} & \int_{0}^{T} \int_{\Omega} |v^n|^4|u^n|^2 \, dx dt \\
	\leq {}& \int_{0}^{T} \left(\int_{\Omega} |v^n|^6 \, dx\right)^{\frac{2}{3}} \left(\int_{\Omega}|u^n|  \, dx\right)^{\frac{1}{3}} \, dt\\
	 = {} & \int_{0}^{T} \|v^n\|_{L^6(\Omega)}^{4} \|u^n\|_{L^6(\Omega)}^{2} \, dt \\
	 \leq {} & \|v^n\|_{L^{8}(0,T;L^6(\Omega))}^{4} \|v^n\|_{L^{4}(0,T;L^6(\Omega))}^{2}\\
	 \lesssim{} & \|v^n\|_{L^\infty(0,T;H_{0}^{1}(\Omega))}\|u^n\|_{L^\infty(0,T;H_{0}^{1}(\Omega))}^{2} <\infty.
    \end{align*}
    Analogously we have the another limitation. From the Lions Lemma we conclude that
    \begin{equation}\label{4.16}
    (v^n)^2u^n \rightharpoonup v^2u \hbox{ and } (u^n)^2v^n \rightharpoonup u^2v \hbox{ in } L^2(0,T;L^2(\Omega)).
    \end{equation}
	
	In this point we shall divide our proof into two cases: (i) ~ $u\ne0$ and $v \ne 0$  and  (ii) $(u,v)=(0,0)$. We note that these are the only two cases we should consider. Indeed, if $u = 0$ then necessarily $v = 0$ because if we take the following sequence of problems into account
	\begin{equation}\label{4.17}
	\begin{cases}
	\rho(x) u_{tt}^{n} - \operatorname{div}(K(x) \nabla u^{n}) + (v^{n})^2u^n- \operatorname{div}(a(x) \nabla  u_t^n)=0  &~\hbox{ in }~ \Omega \times (0,T), \\
	\rho(x) v_{tt}^{n} - \operatorname{div}(K(x) \nabla v^{n}) + (u^{n})^2v^n - \operatorname{div}(b(x) \nabla v_t^n)=0  &~\hbox{ in }~ \Omega\times (0,T), \\
	u = v =  0 &~\hbox{ on }~\partial \Omega \times (0,T),\\
	u^n(x,0)=u_{0}^{n}(x), ~ u_t(x,0)=u_1^n(x) &~\hbox{ in }~ \Omega,\\
	v^n(x,0)=v_0^n(x), ~ v_t^n(x,0)=v_1^n(x) &~\hbox{ in }~ \Omega,
	\end{cases}
	\end{equation}
	and passing to the limit in \eqref{4.17} taking \eqref{4.9}, \eqref{4.10} and \eqref{4.13}-\eqref{4.16}, into account, we obtain, in the distributional sense,
	\begin{equation}\label{4.18}
	\begin{cases}
	\rho(x) v_{tt} - \operatorname{div}(K(x) \nabla v)=0  &~\hbox{ in }~ \Omega\times (0,T), \\
	v_t=0 &~\hbox{ in }~ \omega\times (0,T),
	\end{cases}
	\end{equation}
	and for $z=v_t$ from \eqref{4.18} we infer
	\begin{equation}\label{4.19}
	\begin{cases}
	\rho(x) z_{tt} - \operatorname{div}(K(x) \nabla z)=0  &~\hbox{ in }~ \Omega\times (0,T), \\
	z=0 &~\hbox{ in }~ \omega\times (0,T),
	\end{cases}
	\end{equation}
	which implies that $z=v_t=0$ and from \eqref{4.18} we deduce that $v=0$. Analogously if $v=0$ then $u=0$.
	
	\noindent{Case (i):~$u\ne 0$ and $v\ne 0$.}

	Taking the following subsequence of problems into account
	\begin{equation}\label{4.20}
	\begin{cases}
	\rho(x) u_{tt}^{n} - \operatorname{div}(K(x) \nabla u^{n}) + (v^{n})^2u^n- \operatorname{div}(a(x) \nabla  u_t^n)=0  &~\hbox{ in }~ \Omega \times (0,T), \\
	\rho(x) v_{tt}^{n} - \operatorname{div}(K(x) \nabla v^{n}) + (u^{n})^2v^n - \operatorname{div}(b(x) \nabla v_t^n)=0  &~\hbox{ in }~ \Omega\times (0,T), \\
	u^n(x,0)=u_{0}^{n}(x), ~ u_t(x,0)=u_1^n(x) &~\hbox{ in }~ \Omega,\\
	v^n(x,0)=v_0^n(x), ~ v_t^n(x,0)=v_1^n(x) &~\hbox{ in }~ \Omega,
	\end{cases}
	\end{equation}
	and passing to the limit in \eqref{4.20} taking \eqref{4.9}, \eqref{4.10} and \eqref{4.13}-\eqref{4.16}, into account, we obtain, in the distributional sense,
	\begin{equation}\label{4.21}
	\begin{cases}
	\rho(x) u_{tt} - \operatorname{div}(K(x) \nabla u) + v^2u =0  &~\hbox{ in }~ \Omega \times (0,T), \\
	\rho(x) v_{tt} - \operatorname{div}(K(x) \nabla v) + u^2v=0  &~\hbox{ in }~ \Omega\times (0,T), \\
	u_t=v_t=0 &~\hbox{ in }~ \omega\times(0,T)
	\end{cases}
	\end{equation}
	and for $w=u_t$ and $z=v_t$ from \eqref{4.21} we infer
	\begin{equation}\label{4.22}
	\begin{cases}
	\rho(x) w_{tt} - \operatorname{div}(K(x) \nabla w) + 2vuz+v^2w =0  &~\hbox{ in }~ \Omega \times (0,T), \\
	\rho(x) z_{tt} - \operatorname{div}(K(x) \nabla z) + 2uvw+u^2z=0  &~\hbox{ in }~ \Omega\times (0,T), \\
	w=z=0 &~\hbox{ in }~ \omega\times(0,T),
	\end{cases}
	\end{equation}
	Defining $V_1(,x,t)=v^2$, $V_2(x,t)=u^2$ and $V_3(x,t)=-2uv$, the system \eqref{4.22} can be rewrite like
	\begin{equation}\label{4.23}
	\begin{cases}
	\rho(x) w_{tt} - \operatorname{div}(K(x) \nabla w) + V_1(x,t)w =V_3(x,t)z  &~\hbox{ in }~ \Omega \times (0,T), \\
	\rho(x) z_{tt} - \operatorname{div}(K(x) \nabla z) + V_2(x,t)z=V_3(x,t)w  &~\hbox{ in }~ \Omega\times (0,T), \\
	w=z=0 &~\hbox{ in }~ \omega\times(0,T),
	\end{cases}
	\end{equation}
	Employing Assumption \ref{assumption1.3}, we conclude that $(w,z)=(0,0)$, and, consequently from \eqref{4.21} it follows that $u=v=0$, which is a contradiction.
	
	\noindent{Case (ii):~$(u,v)=(0,0)$.}
		
	Now, we define:
	\begin{eqnarray}\label{4.24}
	\alpha_n:= [E_{u^n,v^n}(0)]^{1/2},~ w^n:= \frac{u^n}{\alpha_n} \hbox{ and } z^n:= \frac{v^n}{\alpha_n}.
	\end{eqnarray}
		Now, let us consider the following subsequence of problems
	\begin{equation}\label{4.25}
	\begin{cases}
	\rho(x) w_{tt}^{n} - \operatorname{div}(K(x) \nabla w^{n}) + \alpha_n^2(z^{n})^2w^n- \operatorname{div}(a(x) \nabla  w_t^n)=0  &~\hbox{ in }~ \Omega \times (0,T), \\
	\rho(x) z_{tt}^{n} - \operatorname{div}(K(x) \nabla z^{n}) + \alpha_{n}^{2}(w^{n})^2z^n - \operatorname{div}(b(x) \nabla \ z_t^n)=0  &~\hbox{ in }~ \Omega\times (0,T), \\
	w_n = z_n =  0 &~\hbox{ on }~\partial \Omega \times (0,T),\\
	w^n(x,0)=w_{0}^{n}(x)=\frac{u_0^n}{\alpha_n}, ~ w_t(x,0)=w_1^n(x)=\frac{u_1^n}{\alpha_n} &~\hbox{ in }~ \Omega,\\
	z^n(x,0)=z_0^n(x)=\frac{v_0^n}{\alpha_n}, ~ z_t^n(x,0)=z_1^n(x)=\frac{v_1^n}{\alpha_n} &~\hbox{ in }~ \Omega.
	\end{cases}
	\end{equation}
	A simple calculation shows that
	\begin{equation}\label{4.26}
	E_{w^n,z^n}(t)=\frac{1}{\alpha_n^2}E_{u^n,v^n}(t).
	\end{equation}
	It is not difficult to check that there exists a positive constant $C$ such that $1/C\leq E_{w_n,z^n}(0)=1$ for all $n\in \mathbb{N}$. In order to achieve a contradiction we need to prove that  $E_{w^n,z^n}(0)$ converges to zero. Indeed, first, taking \eqref{4.8} and \eqref{4.24} into consideration we infer
	\begin{eqnarray}\label{4.27}
	\lim_{n\rightarrow \infty}\int_0^T \int_\Omega a(x) |\nabla w_t^n(x,t)|^2\,dxdt=0,
	\end{eqnarray}
	and
	\begin{eqnarray}\label{4.28}
	\lim_{n\rightarrow \infty}\int_0^T \int_\Omega a(x) |\nabla z_t^n(x,t)|^2\,dxdt=0.
	\end{eqnarray}
	Since $a(x) \geq a_0 >0$ a.e. in $\omega$ from \eqref{4.27} we infer
	\begin{eqnarray}\label{4.29}
	\lim_{n\rightarrow \infty}\int_0^T \int_{\omega} |\nabla w_t^n(x,t)|^2\,dxdt=0,
	\end{eqnarray}
	and since that $b(x)\geq b_0 >0$ a.e. in $\omega$  from \eqref{4.28} we obtain that
	\begin{eqnarray}\label{4.30}
	\lim_{n\rightarrow \infty}\int_0^T \int_{\omega}  |\nabla z_t^n(x,t)|^2\,dxdt=0.
	\end{eqnarray}
	Taking \eqref{4.29} and \eqref{4.30} into account and considering a variant of Poincar\'e's inequality, it follows that
	\begin{eqnarray}\label{4.31}
	\lim_{n\rightarrow \infty}\int_0^T \int_{\omega}  |w_t^n(x,t)|^2\,dxdt=0
	\end{eqnarray}
	and
	\begin{eqnarray}\label{4.32}
	\lim_{n\rightarrow \infty}\int_0^T \int_{\omega}  |z_t^n(x,t)|^2\,dxdt=0.
	\end{eqnarray}
	
	Further, from the boundness  $E_{w^n,z^n}(0)\leq C$ we also deduce that for an eventual subsequence of $\{w^n,z^n\}$ that
	\begin{eqnarray}\label{4.33}
    &&(w^n,z^n) \overset{\ast}{\rightharpoonup} (w,z) \hbox{ in } (L^\infty(0,T;H_0^1(\Omega)))^2,\\
    &&(w_t^n,z_{t}^{n}) \overset{\ast}{\rightharpoonup} (w_t,z_t) \hbox{ in } (L^\infty(0,T;L^2(\Omega)))^2.\label{4.34}
	\end{eqnarray}
	From standard compactness arguments, we deduce, for an eventual subsequence, that will be denote by the same notation, that
	\begin{eqnarray}\label{4.35}
	&& (w^n,z^n) \rightarrow (w,z) \hbox{ in } (L^\infty(0,T;L^q(\Omega)))^2, \hbox{ for all } q\in[2,6)
	\end{eqnarray}

	Note that, for an eventual subsequence, $\alpha_n \rightarrow \alpha \in [0,+\infty)$. If $\alpha = 0$ and since
		\begin{eqnarray}
	&&\alpha_nw^n =u^n \rightarrow 0 \hbox{ in } L^\infty(0,T;H_0^1(\Omega)))^2, \label{4.36}\\
	&&\alpha_nz^n =v^n \rightarrow 0 \hbox{ in } L^\infty(0,T;H_0^1(\Omega)). \label{4.37}
	\end{eqnarray}
	Taking \eqref{4.27}, \eqref{4.28}, \eqref{4.33}, \eqref{4.34}, \eqref{4.36} and \eqref{4.37} into account, passing to the limit in \eqref{4.25} arrive at
	\begin{equation}\label{4.38}
	\begin{cases}
	\rho(x) w_{tt} - \operatorname{div}(K(x) \nabla w)  =0  &~\hbox{ in }~ \Omega \times (0,T), \\
	\rho(x) z_{tt} - \operatorname{div}(K(x) \nabla z)  =0  &~\hbox{ in }~ \Omega\times (0,T), \\
	w_t=z_t=0 &~\hbox{ in }~ \omega\times(0,T).
	\end{cases}
	\end{equation}
	For $\varphi=w_t$ and  $\psi=z_t$ from \eqref{4.38} we infer
    \begin{equation}\label{4.39}
    \begin{cases}
    \rho(x) \varphi_{tt} - \operatorname{div}(K(x) \nabla \varphi)  =0  &~\hbox{ in }~ \Omega \times (0,T), \\
    \rho(x) \psi_{tt} - \operatorname{div}(K(x) \nabla \psi)  =0  &~\hbox{ in }~ \Omega\times (0,T), \\
    \varphi=\psi=0 &~\hbox{ in }~ \omega\times(0,T).
    \end{cases}
    \end{equation}
	which implies that $\varphi=\psi=0$ and, consequently from \eqref{4.38}, $w=z=0$.
	
	Now, let us consider $\alpha>0$. So, passing to the limit in \eqref{4.25} we arrive at
	\begin{equation}\label{4.40}
	\begin{cases}
	\rho(x) w_{tt} - \operatorname{div}(K(x) \nabla w) + \alpha^2 z^2w =0  &~\hbox{ in }~ \Omega \times (0,T), \\
	\rho(x) z_{tt} - \operatorname{div}(K(x) \nabla z) + \alpha^2w^2z=0  &~\hbox{ in }~ \Omega\times (0,T), \\
	w_t=z_t=0 &~\hbox{ in }~ \omega\times(0,T).
	\end{cases}
	\end{equation}
	For $\varphi=w_t$ and $\psi=z_t$ from \eqref{4.40} we infer
	\begin{equation}\label{4.41}
	\begin{cases}
	\rho(x) \varphi_{tt} - \operatorname{div}(K(x) \nabla w) + 2\alpha^2 zw\psi +\alpha^2z^2\varphi=0  &~\hbox{ in }~ \Omega \times (0,T), \\
	\rho(x) \psi_{tt} - \operatorname{div}(K(x) \nabla \psi) + 2\alpha^2 wz\varphi+\alpha^2w^2\psi=0  &~\hbox{ in }~ \Omega\times (0,T), \\
	\varphi=\psi=0 &~\hbox{ in }~ \omega\times(0,T).
	\end{cases}
	\end{equation}
	Denoting $V_1(x,t)=\alpha^2z^2$ , $V_2(x,t)=\alpha^2w^2$ and $V_3(x,t)=-2\alpha^2zw$ we can rewrite \eqref{4.41} like
		\begin{equation}\label{4.42}
	\begin{cases}
	\rho(x) \varphi_{tt} - \operatorname{div}(K(x) \nabla w) + V_1(x,t)\varphi=V_3(x,t)\psi  &~\hbox{ in }~ \Omega \times (0,T), \\
	\rho(x) \psi_{tt} - \operatorname{div}(K(x) \nabla \psi) + V_2(x,t)\psi=V_3(x,t)\varphi  &~\hbox{ in }~ \Omega\times (0,T), \\
	\varphi=\psi=0 &~\hbox{ in }~ \omega\times(0,T).
	\end{cases}
	\end{equation}
	Employing Assumption \ref{assumption1.3}, from \eqref{4.42} it follows that $(\varphi,\psi)=(0,0)$ and, consequently, $w=z=0$. Thus in any of both cases $w=z=0$. As a consequence $w=z=0$ in all the convergences in \eqref{4.33}-\eqref{4.35}.
	
	Proceeding in the same way as in \eqref{4.16}, and using the fact that $w=z=0$, we obtain
	\begin{equation}\label{4.43}
	\alpha_n^2(z^n)^2w^n \rightharpoonup 0 \hbox{ and } \alpha_n^2(w^n)^2z^n \rightharpoonup 0 \hbox{ in } L^2(0,T;L^2(\Omega)).
	\end{equation}
	Remember that our main objective is to prove that $E_{w^n,z^n}(0)$ converges to zero,  where
		\begin{eqnarray*}
		E_{w^n,z^n}(t) &=& \frac{1}{2} \int_{\Omega} \rho|w_t^n(x,t)|^2 +\rho|z_t^n(x,t)|^2 +(\nabla w^{n}(x,t))^{\top}\cdot K(x) \cdot \nabla w^n(x,t)\, dx \\
		&&\frac{1}{2} \int_{\Omega} (\nabla z^{n}(x,t))^{\top}\cdot K(x) \cdot \nabla z^n(x,t) +\alpha_n^2(w^nz^n)^2 \, dx
		\end{eqnarray*}

		Let us use the following notation
		\begin{equation*}
		P := -\rho \partial_{t}^{2} - \sum_{i=1}^{d} \partial_{x_{i}}(K(x)\partial_{x_{j}}).
		\end{equation*}
		Then, we have that given the weak convergence in $L^{2}(\Omega \times (0,T))$ of the nonlinear terms $\alpha_{n}^{2}(z^n)^2w^n$ and $\alpha_{n}^{2}(w^n)^2z^n$, we deduce that
		\begin{equation}\label{4.44}
		Pw_{t}^{n} \rightarrow 0 \hbox{ in } H^{-1}_{loc}(\Omega \times (0,T))
		\end{equation}
		and
		\begin{equation}\label{4.45}
		Pz_{t}^{n} \rightarrow 0 \hbox{ in } H^{-1}_{loc}(\Omega \times (0,T)).
		\end{equation}
		
		Let us also denote by $\mu_1$ and $\mu_2$ the microlocal defect measures associated to $\{w_{t}^n\}$ and $\{z_t^n\}$ in $L^2(\Omega \times (0,T))$, which is assured by Theorem \ref{theorem4}. Then, taking into account Assumption \ref{assumption1.2}, we deduce two facts:
		
	(i) The supports of the measures $\mu_1$ and $\mu_2$ are contained in the characteristic set of the wave operator $\{\tau^2-\frac{1}{\rho}\xi^{\top} \cdot K(x)\cdot \xi=0\}$.
	
	(ii) $\mu_1$ and $\mu_2$ propagates along the bicharacteristic  flow of this operator, which signifies, particularly, that if some point $\omega_0=(x_0,t_0,\xi_0,\tau_0)$ does not belong to the $\supp(\mu_1)$ or $\supp (\mu_2)$ the whole bicharacteristic issued from $\omega_0$ is out of $\supp(\mu_1)$ or $\supp (\mu_2)$.

		Indeed, from \eqref{4.44}, \eqref{4.45} and Theorem \ref{theorem5} we deduce item $(i)$.
				
		Furthermore, from Proposition \ref{proposition5.1} and Theorem \ref{theorem7} found in the Appendix, we deduce that $\supp(\mu)$ in $(\Omega \times (0,T))\times S^d$ is a union of curves like
		\begin{eqnarray}\label{4.46}
		t \in I\cap (0,\infty) \mapsto m\pm(t)=\left(t, x(t), \frac{\pm1}{\sqrt{1+|G(x)\dot{x}|}},\frac{\pm G(x) \dot{x}}{\sqrt{1+|G(x)\dot{x}|}} \right),
		\end{eqnarray}
		where $t\in I \mapsto x(t)\in \Omega$ is a geodesic associated to the metric .
		
		Since $ w_t^{n} \rightarrow 0$  and $ z_t^{n} \rightarrow 0$ in $L^2(\omega \times (0,T))$, we deduce that $\mu_1=\mu_2=0$ in $\omega$ and consequently $\supp(\mu_1) \subset (\Omega \setminus \omega) \times (0,T)$ and $\supp(\mu_2) \subset (\Omega \setminus \omega) \times (0,T)$.
		
		On the other hand, let $t_0 \in (0,+\infty)$ and let $x$ be a geodesic of $G$ defined near $t_0$. Once the geodesics inside $\Omega\backslash\omega$, enter necessarily in the region $\omega$, then, for any geodesic of the metric $G$, with $0\in I$ there exists $t >0 $ such that $m\pm(t)$ does not belong to the $\supp(\mu_1)$ and $\supp (\mu_2)$, so that $m\pm(t_0)$ does not belong as well and item $(ii)$ follows.  Once the time $t_0$ and the geodesic $x$ were taken arbitrary, we conclude that $\supp(\mu_1)$ and $\supp (\mu_2)$ are empty. As a consequence, $w_t^n\rightarrow 0$  and $z_t^n\rightarrow 0$ in $L^2_{loc}(\Omega \times (0,T))$. Since, $v_t^n \rightarrow 0$ and $z_t^n \rightarrow 0$ strongly in $L^2(\omega \times (0,T))$ we deduce
		\begin{eqnarray}\label{4.47}
		w_t^n \rightarrow 0 \hbox{ in } L^2(0,T;L^2(\Omega))
		\end{eqnarray}
		and
		\begin{eqnarray}\label{4.48}
		z_t^n \rightarrow 0 \hbox{ in } L^2(0,T;L^2(\Omega)).
		\end{eqnarray}
		It is then easy to show that $E_{w^n,z^n}(0)$ converges to zero. Indeed, let us consider the following cut-off function
		\begin{align*}
		&\theta\in C^{\infty}(0,T),  \quad    0\leq \theta(t) \leq 1,  \quad   \theta(t)=1 \ \mbox{in} \ (\varepsilon,T-\varepsilon).
		\end{align*}
		
		Multiplying  the first equation in \eqref{4.25} by $w^n \theta$, the second by $z^n \theta$ and integrating by parts, we infer
		\begin{eqnarray}\label{4.49}
		&& -\int_0^T \theta(t)\int_\Omega \rho(x)|w_t^n|^2\,dxdt - \int_0^T \theta'(t)\int_\Omega\rho(x) w_t^n w^n\,dxdt\\
		&&+\int_0^T \theta(t) \int_\Omega (\nabla w^n)^{\top} \cdot K(x) \cdot \nabla w^n \,dxdt + \alpha_{n}^{2}\int_0^T \theta(t) \int_\Omega  (z^n)^2(w^n)^2 \,dxdt\nonumber\\
		&&+ \int_0^T \theta(t) \int_\Omega a(x) \nabla w_t^n \cdot \nabla w^n\,dxdt=0 \nonumber
		\end{eqnarray}
		and
		\begin{eqnarray}\label{4.50}
		&& -\int_0^T \theta(t)\int_\Omega \rho(x)|z_t^n|^2\,dxdt - \int_0^T \theta'(t)\int_\Omega\rho(x) z_t^n z^n\,dxdt\\
		&&+\int_0^T \theta(t) \int_\Omega (\nabla z^n)^{\top} \cdot K(x) \cdot \nabla z^n \,dxdt + \alpha_{n}^{2}\int_0^T \theta(t) \int_\Omega  (z^n)^2(w^n)^2 \,dxdt\nonumber\\
		&&+ \int_0^T \theta(t) \int_\Omega a(x) \nabla z_t^n \cdot \nabla z^n\,dxdt=0 \nonumber
		\end{eqnarray}
		Considering the convergences \eqref{4.27}, \eqref{4.28}, \eqref{4.33}-\eqref{4.35} and \eqref{4.43} having in mind that $w=z=0$ from \eqref{4.49}  and \eqref{4.50} we deduce that
		\begin{equation*}
		\lim_{n \rightarrow +\infty}\int_{0}^{T} \theta(t)\int_\Omega (\nabla w^{n})^{T} \cdot K(x) \cdot \nabla w^n  \,dxdt = 0 \hbox{ and } \lim_{n \rightarrow +\infty}\int_{0}^{T}\theta(t) \int_\Omega (\nabla z^{n})^{T} \cdot K(x) \cdot \nabla z^n  \,dxdt = 0,
		\end{equation*}
		which implies that
				\begin{equation*}
		\lim_{n \rightarrow +\infty}\int_\varepsilon^{T-\varepsilon} \int_\Omega (\nabla w^{n})^{T} \cdot K(x) \cdot \nabla w^n  \,dxdt = 0 \hbox{ and } \lim_{n \rightarrow +\infty}\int_\varepsilon^{T-\varepsilon} \int_\Omega (\nabla z^{n})^{T} \cdot K(x) \cdot \nabla z^n  \,dxdt = 0.
		\end{equation*}
		We also have
		\begin{equation*}
		\alpha_n^2\int_{\varepsilon}^{T-\varepsilon}\int_{\Omega}(w^n z^n)^2 \, dx dt \rightarrow 0.
		\end{equation*}
	
		Combining the above convergences we have that
		\begin{equation*}
		\int_{\varepsilon}^{T-\varepsilon}E_{w^n,z^n}(t) \, dt \rightarrow 0.
		\end{equation*}
		Then by the decrease of the energy, we obtain
		\begin{equation*}
		(T-2\varepsilon)E_{w^n,z^n}(T-2\varepsilon) \rightarrow 0.
		\end{equation*}	
		Combining the energy identity
		\begin{equation*}
		E_{w^n,z^n}(T-\varepsilon)-E_{w^n,z^n}(\varepsilon)=-\int_{\varepsilon}^{T-\varepsilon}a(x)|\nabla w_t^n(x,t)|^2+b(x)|\nabla z_t^n(x,t)|^2 \, dxdt,
		\end{equation*}
		with \eqref{4.27}, \eqref{4.28} and the arbitrariness of $\varepsilon>0$, it follows that $E_{w^n,z^n}(0) \rightarrow 0$ as $n\rightarrow +\infty$, as we desired to prove.
\end{proof}

%%%%%%%%%%%%%%%%%%%%%%%%%%%%%%%%%%%%%%%%%%%%%%%%%%%%%%%%%%%%%%%%%%%%%%%%%%%%%%%%%%%%%%%%%%%%%%%%%%%%%%%%%%%%
%%%%%%%%%%%%%%%%%%%%%%%%%%%%%%%%%%%%%%%%%%%%%%%%%%%%%%%%%%%%%%%%%%%%%%%%%%%%%%%%%%%%%%%%%%%%%%%%%%%%%%%%%%%%
\section{Appendix}
\setcounter{equation}{0}
\subsection{Part I}

We start this section by presenting some classical tools regarding the Hamiltonian vector field and its bicharacteristic curves in the cotangent $(x,\xi)-$space for real functions $p(x,\xi)$.

Let $p \in C^{\infty}(\Omega \times {\mathbb{R}}^d\backslash\{0\})$ be a real valued function.  We call $H_p$ an Hamiltonian field of $p$, the following vector field defined in $\Omega \times {\mathbb{R}}^d\backslash\{0\}$:
$$H_p(x,\xi)= \left(\frac {\partial p} { \partial \xi_1} (x,\xi), \cdots, \frac {\partial p} {\partial \xi_n}  (x,\xi); -\frac {\partial p} {\partial x_1} (x,\xi), \cdots, -\frac {\partial p} {\partial x_n} (x,\xi)\right).$$

The Lie derivative of a function $f$ with respect the Hamiltonian field $H_p$ is given by $H_p (f)=\{p,f\}$, where
\begin{eqnarray*}
	\{p,f\}(x,\xi)=\sum_{j=1}^d \left(\frac{\partial p}{\partial \xi_j}\frac{\partial f}{\partial x_j}-\frac{\partial p}{\partial x_j}\frac{\partial f}{\partial \xi_j}\right).
\end{eqnarray*}

An Hamiltonian curve of $p$ is an integrable curve of the vector field $H_p$, that is a maximal solution $s\in I \mapsto (x(s),~\xi(s))$ to the Hamilton-Jacobi equations
\begin{equation}\label{5.1}
\left\{
\begin{aligned}
& \dot{x}=p_\xi(x,\xi)=\frac{\partial p}{\partial \xi}(x,\xi),\quad \dot{\xi}=-p_x(x,\xi)=- \frac{\partial p}{\partial x},\\
\end{aligned}
\right.
\end{equation}
where $I$ is an open interval of $\mathbb{R}$.

Since $H_pp =0$ the function $p$ keeps a constant length value on each of its Hamiltonian curves.  We say that such curve is a bicharacteristic of $p$ if this value is null.

\begin{remark}\label{remark4}
	Let $\lambda$ be a $C^\infty$ function on $T^0\Omega$ with real values different of zero. Since
	\begin{eqnarray*}
		H_{\lambda p} = \lambda H_p + p H_\lambda = \lambda H_p ~\hbox{ if }p=0,
	\end{eqnarray*}
	it results that the bicharacteristics of $\lambda p$ and $p$ coincide (modulo a re-parametrization).
\end{remark}

Next, for the readers comprehension and for the sake of clarify, we shall announce some results due to Burq and G\'erard \cite{Burq-Gerard} (without their proofs) which were developed firstly in G\'erard \cite{Gerard}.

We have the first important result:

\begin{theorem}\label{theorem4}
	Let $\{u_n\}_{n\in \mathbb{N}}$ be  a bounded sequence in $L_{loc}^2(\Omega)$ such that it converges weakly to zero in $L_{loc}^2(\Omega)$. Then, there exists a subsequence $\{u_{\varphi(n)}\}$ and a positive Radon measure $\mu$ on $T^1\Omega:=\Omega\times S^{d-1}$ such that for all pseudo-differential operator $A$ of order $0$ on $\Omega$ which admits a principal symbol $\sigma_0(A)$ and for all $\chi \in C_0^\infty(\Omega)$ such that $\chi \sigma_0(A)=\sigma_0(A)$, one has
	\begin{eqnarray}\label{5.2}
	\left(A(\chi u_{\varphi(n)}), \chi u_{\varphi_n}\right)_{L^2}\underset{n\rightarrow +\infty}\longrightarrow \int_{\Omega \times S^{n-1}}\sigma_0(A)(x,\xi)\,d\mu(x,\xi).
	\end{eqnarray}
\end{theorem}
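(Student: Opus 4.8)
The plan is to reproduce, in the local setting, the classical construction of microlocal defect measures (equivalently Tartar's H--measures). I would work first with classical pseudo-differential operators of order $0$ whose symbols are homogeneous of degree $0$ in $\xi$ for $|\xi|\geq 1$, so that $\sigma_0(A)\in C^\infty(\Omega\times S^{d-1})$, and reduce the general order-$0$ case at the very end by noting that two such operators with the same principal symbol differ by an operator of order $-1$. Since $u_n\in L^2_{loc}(\Omega)$, to make every pairing meaningful I would first localize: writing $(A(\chi u_n),\chi u_n)_{L^2}=(\chi A\chi\, u_n,u_n)_{L^2}+R_n$ with $R_n=([A,\chi]\chi u_n,\chi u_n)_{L^2}$, and observing that $[A,\chi]$ has order $-1$, one gets $R_n\to 0$ by the compactness argument below; so it suffices to analyse $\ell_n(B):=(Bu_n,u_n)_{L^2}$ for $B$ a pseudo-differential operator of order $0$ whose kernel is compactly supported in $\Omega\times\Omega$.

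\emph{Extraction and reduction to the principal symbol.} By the Calder\'on--Vaillancourt theorem, $|\ell_n(B)|\leq\|B\|_{\mathcal L(L^2)}\|\chi_B u_n\|_{L^2}^2$ is bounded uniformly in $n$ for $B$ supported in a fixed compact set, with the bound governed by finitely many symbol seminorms whose leading part is $\sup|\sigma_0(B)|$. The space of admissible principal symbols is separable for the uniform norm on $\Omega\times S^{d-1}$; picking a countable dense family and extracting a diagonal subsequence $\varphi(n)$, the limit $\ell(B):=\lim_n\ell_{\varphi(n)}(B)$ then exists for every compactly supported order-$0$ operator $B$, by density together with the uniform bound. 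Next, if $\sigma_0(B)=0$ then $B$ has order $\leq -1$, so $B:L^2\to H^1_{loc}$ is bounded; hence $Bu_{\varphi(n)}$ is bounded in $H^1$ on compacts and, by Rellich, relatively compact in $L^2_{loc}$, while $u_{\varphi(n)}\rightharpoonup 0$ in $L^2_{loc}$ forces $Bu_{\varphi(n)}\rightharpoonup 0$, whence $Bu_{\varphi(n)}\to 0$ strongly in $L^2_{loc}$ and $\ell(B)=0$. Thus $\ell$ factors through $B\mapsto\sigma_0(B)$ and defines a linear functional $L$ on the space of principal symbols, dense in $C_0(\Omega\times S^{d-1})$.

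\emph{Positivity, Riesz representation, and conclusion.} If $\sigma_0(B)\geq 0$, the sharp G\aa rding inequality gives $(Bv,v)_{L^2}\geq -C\|v\|_{H^{-1/2}}^2$ for $v$ supported in a fixed compact; since $u_{\varphi(n)}\rightharpoonup 0$ in $L^2_{loc}$, Rellich's theorem upgrades this to $\chi u_{\varphi(n)}\to 0$ strongly in $H^{-1/2}$, so $\ell(B)\geq 0$. Therefore $L$ extends to a positive linear functional on $C_0(\Omega\times S^{d-1})$ (first on real symbols, then to complex ones by linearity, using $\sigma_0(B^{\ast})=\overline{\sigma_0(B)}$ and $\ell(B^{\ast})=\overline{\ell(B)}$), and the Riesz representation theorem furnishes a positive Radon measure $\mu$ on $\Omega\times S^{d-1}$ with $L(f)=\int f\,d\mu$. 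Finally, for $A$ of order $0$ and $\chi\in C_0^\infty(\Omega)$ with $\chi\sigma_0(A)=\sigma_0(A)$ (so $\chi\equiv 1$ on the $x$-projection of $\operatorname{supp}\sigma_0(A)$), the operator $\chi A\chi$ is compactly supported of order $0$ with principal symbol $\chi^2\sigma_0(A)=\sigma_0(A)$, and the preceding steps together with the reduction of the first paragraph give $(A(\chi u_{\varphi(n)}),\chi u_{\varphi(n)})_{L^2}\to\int_{\Omega\times S^{d-1}}\sigma_0(A)\,d\mu$, as claimed; the non-homogeneous order-$0$ case follows because the difference from a classical operator with the same principal symbol has order $-1$.

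I expect the delicate point to be the positivity step: one must marry the sharp G\aa rding inequality, whose error is controlled only in $H^{-1/2}$, with the fact that the weak convergence $u_n\rightharpoonup 0$ in $L^2_{loc}$ improves, by Rellich compactness, to strong convergence $\chi u_n\to 0$ in $H^{-1/2}_{loc}$; keeping all the cut-offs and compact supports consistent so that these local statements glue correctly is the real bookkeeping burden. A secondary issue is choosing a topology on the symbol class that is simultaneously separable (for the diagonal extraction) and strong enough that the $L^2$-operator-norm bound is continuous for it.
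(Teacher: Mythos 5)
The paper does not prove this theorem at all: it is quoted verbatim from Burq--G\'erard and G\'erard (``we shall announce some results \ldots without their proofs''), so there is no in-paper argument to compare against. Your construction is precisely the standard one from those references --- localization, Calder\'on--Vaillancourt bound, diagonal extraction over a countable dense symbol family, vanishing of $\ell$ on operators of negative order via Rellich, positivity via sharp G\aa rding combined with the strong $H^{-1/2}_{loc}$ convergence, and Riesz representation --- and it is correct. The only point I would tighten is the density step: to pass from the countable family to all symbols you need the quantitative bound $\limsup_n|\ell_n(\mathrm{Op}(c))|\lesssim\sup|c|$, which does not follow from symbol seminorms alone but is obtained by applying the G\aa rding/Rellich argument to $(\sup|c|)^2-|c|^2\geq 0$; your phrasing gestures at this but does not quite pin it down.
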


\begin{definition}\label{definition5.1}
	Under the circumstances of Theorem \ref{theorem4} $\mu$ is called the {\bf microlocal defect measure} of the sequence $\{u_{\varphi(n)}\}_{n\in \mathbb{N}}$.
\end{definition}

\begin{remark}\label{remark5}
	Theorem \ref{theorem4} assures that for all bounded sequence $\{u_n\}_{n\in \mathbb{N}}$ of $L_{loc}^2(\Omega)$ which converges weakly to zero, the existence of a subsequence admitting a microlocal defect measure (in short, m.d.m). We observe that from \eqref{5.2} in the particular case when $A=f\in C_0^\infty(\Omega)$, it follows that
	\begin{eqnarray}\label{5.3}
	\int_\Omega f(x) |u_{\varphi(n)}(x)|^2\,dx \rightarrow \int_{\Omega \times S^{d-1}}f(x)\,d\mu(x,\xi),
	\end{eqnarray}
	so that $u_{\varphi(n)}$ converges to $0$ strongly  if and only if $\mu=0$.
\end{remark}

The second important result reads as follows:

\begin{theorem}\label{theorem5}
	Let $P$ be a differential operator of order $m$ on $\Omega$ and let $\{u_n\}$ a bounded sequence of $L_{loc}^2(\Omega)$ which converges weakly to $0$ and admits a m.d.m. $\mu$. The following statement are equivalents:
	\begin{eqnarray*}
		&&(i)~Pu_n \underset{n \rightarrow +\infty}\longrightarrow 0 \hbox{ strongly in }H_{loc}^{-m}(\Omega)~(m>0). \ \ \ \ \ \ \ \ \ \ \ \ \ \ \ \ \ \ \ \ \  \ \ \ \ \ \ \ \ \ \ \ \ \ \ \ \ \ \ \ \ \ \\
		&&(ii) \hbox{supp} (\mu) \subset \{(x,\xi)\in \Omega \times S^{d-1}: \sigma_m(P)(x,\xi)=0\}. \ \ \ \ \ \ \ \ \ \ \ \ \ \ \ \ \ \ \ \ \ \ \ \ \ \ \ \ \ \ \ \ \ \ \ \ \ \ \ \ \ \
	\end{eqnarray*}
\end{theorem}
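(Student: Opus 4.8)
The plan is to establish the two implications $(i)\Rightarrow(ii)$ and $(ii)\Rightarrow(i)$ separately, combining the pseudodifferential calculus with the description of $\mu$ provided by Theorem \ref{theorem4}. Throughout, $\{u_n\}$ still denotes the subsequence carrying the m.d.m. $\mu$ and $\Lambda^s=\operatorname{Op}((1+|\xi|^2)^{s/2})$. Everything is local, and I will repeatedly use two elementary facts: a sequence that is bounded in $H^1_{loc}(\Omega)$ and converges weakly to $0$ in $L^2_{loc}(\Omega)$ converges \emph{strongly} to $0$ in $L^2_{loc}(\Omega)$ (Rellich), and a pseudodifferential operator of order $r$ maps $H^s_{loc}$ continuously into $H^{s-r}_{loc}$.

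For $(ii)\Rightarrow(i)$, I would fix $\chi\in C_0^\infty(\Omega)$ and prove $\|\chi\,Pu_n\|_{H^{-m}}\to0$. Pick $\psi\in C_0^\infty(\Omega)$ with $\psi\equiv1$ on a neighbourhood of $\operatorname{supp}\chi$; since $P$ is a differential operator, locality gives $\chi\,Pu_n=\chi\,P(\psi u_n)$, hence
\begin{equation*}
\|\chi\,Pu_n\|_{H^{-m}}^2=\big(\Lambda^{-2m}\chi P(\psi u_n),\,\chi P(\psi u_n)\big)_{L^2}=\big(B(\psi u_n),\psi u_n\big)_{L^2},
\end{equation*}
where $B:=(\chi P)^{\ast}\Lambda^{-2m}(\chi P)$ is a self-adjoint classical pseudodifferential operator of order $0$ with principal symbol $\sigma_0(B)(x,\xi)=\chi(x)^2\,|\sigma_m(P)(x,\xi)|^2\,|\xi|^{-2m}$, and $\psi\,\sigma_0(B)=\sigma_0(B)$ because $\psi\equiv1$ on $\operatorname{supp}\chi$. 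Applying Theorem \ref{theorem4} to $A=B$ with the cut-off $\psi$, and recalling that $|\xi|=1$ on $\operatorname{supp}\mu$, we get
\begin{equation*}
\|\chi\,Pu_n\|_{H^{-m}}^2\;\longrightarrow\;\int_{\Omega\times S^{d-1}}\chi(x)^2\,|\sigma_m(P)(x,\xi)|^2\,d\mu(x,\xi).
\end{equation*}
By hypothesis $(ii)$, $\sigma_m(P)$ vanishes on $\operatorname{supp}\mu$, so the integrand is $0$ $\mu$-a.e. and the limit is $0$. Letting $\chi$ exhaust $\Omega$ gives $Pu_n\to0$ in $H^{-m}_{loc}(\Omega)$.

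For $(i)\Rightarrow(ii)$, I would argue by contraposition: let $(x_0,\xi_0)\in\Omega\times S^{d-1}$ be noncharacteristic, $\sigma_m(P)(x_0,\xi_0)\neq0$. By continuity and homogeneity $\sigma_m(P)$ does not vanish on a conic neighbourhood $\Gamma$ of $(x_0,\xi_0)$; choose a symbol $e\in S^0$, real-valued, $\geq0$, homogeneous of degree $0$ for $|\xi|\geq1$, with compact $x$-support inside $\Omega$, with $\operatorname{supp}e\subset\Gamma$ and $e(x_0,\xi_0)>0$. Set $q=e/\sigma_m(P)\in S^{-m}$ and $Q=\operatorname{Op}(q)$, $E=\operatorname{Op}(e)$, so that $QP$ has order $0$ with principal symbol $q\,\sigma_m(P)=e$, i.e. $QP=E+R$ with $R$ of order $-1$. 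Take $\psi\in C_0^\infty(\Omega)$ equal to $1$ on the $x$-support of $e$; expanding $P(\psi u_n)=\psi\,Pu_n+[P,\psi]u_n$ and applying $Q$ yields
\begin{equation*}
E(\psi u_n)=Q(\psi\,Pu_n)+Q([P,\psi]u_n)-R(\psi u_n).
\end{equation*}
Now $Q(\psi\,Pu_n)\to0$ in $L^2$ because $Pu_n\to0$ in $H^{-m}_{loc}$ and $Q$ maps $H^{-m}$ continuously into $L^2$; the term $Q([P,\psi]u_n)$ is bounded in $H^1_{loc}$ and converges weakly, hence strongly, to $0$ in $L^2_{loc}$; and $R(\psi u_n)\to0$ in $L^2_{loc}$ likewise (order $-1$ applied to a bounded $L^2_{loc}$ sequence). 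Thus $E(\psi u_n)\to0$ in $L^2_{loc}$, so $\big(E(\psi u_n),\psi u_n\big)_{L^2}\to0$; but by Theorem \ref{theorem4} the same quantity converges to $\int_{\Omega\times S^{d-1}}e\,d\mu$. Hence $\int e\,d\mu=0$, and since $e\geq0$, $\mu\geq0$ and $e>0$ on a neighbourhood of $(x_0,\xi_0)$ in $\Omega\times S^{d-1}$, we conclude $(x_0,\xi_0)\notin\operatorname{supp}\mu$. As $(x_0,\xi_0)$ was arbitrary among noncharacteristic points, $\operatorname{supp}\mu\subset\{\sigma_m(P)=0\}$, i.e. $(ii)$.

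The main obstacle is the implication $(i)\Rightarrow(ii)$: one has to construct the microlocal parametrix $Q$ so that its elliptic factor $e$ is \emph{real and non-negative} (so that the sign of $\int e\,d\mu$ can be used to locate $\operatorname{supp}\mu$) while keeping the remainder $R=QP-E$ exactly one order lower, and one must verify that every error term — the commutator $[P,\psi]$, the remainder $R$, and the localization mismatches introduced by the cut-offs $\psi$ — converges to $0$ \emph{strongly} in $L^2_{loc}$ and not merely weakly, which is exactly where Rellich's compactness is used. A subordinate bookkeeping point, and the reason the cut-offs $\psi$ are introduced, is to make sure that all the operators $E$, $B$ built along the way are of the precise type to which Theorem \ref{theorem4} applies: order $0$, admitting a genuine principal symbol whose $x$-support is controlled by a compactly supported function.
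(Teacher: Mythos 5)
Your proof is correct. Note that the paper itself does not prove this statement: it is quoted verbatim from Burq--G\'erard \cite{Burq-Gerard} explicitly ``without their proofs,'' and your argument is essentially the standard one from that source (and from G\'erard's original paper \cite{Gerard}) --- testing $\mu$ against the order-zero operator $(\chi P)^{\ast}\Lambda^{-2m}(\chi P)$ for one direction, and a microlocal parametrix with a non-negative elliptic factor plus Rellich compactness for the error terms in the other. Both implications are handled soundly, including the points that usually cause trouble (the requirement $\psi\,\sigma_0(A)=\sigma_0(A)$ in Theorem \ref{theorem4}, and the strong --- not merely weak --- $L^2_{loc}$ convergence of $Q([P,\psi]u_n)$ and $R(\psi u_n)$).
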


\begin{theorem}\label{theorem6}
	Let $P$ be a differential operator of order $m$ on $\Omega$, verifying $P^\ast=P$, and let $\{u_n\}$ be a bounded sequence in $L_{loc}^2(\Omega)$ which converges weakly to $0$ and it admits a m.d.m. $\mu$. Let us assume that $P u_n \underset{n\rightarrow +\infty}\longrightarrow 0$ strongly in $H_{loc}^{1-m}$. Then, for all function $a\in C^\infty(\Omega \times (\mathbb{R}^d)\backslash\{0\})$ homogeneous of degree $1-m$ in the second variable and with compact support in the first one,
	\begin{eqnarray}\label{5.4}
	\int_{\Omega \times s^{d-1}}\{a,p\}(x,\xi)\,d\mu(x,\xi)=0.
	\end{eqnarray}
\end{theorem}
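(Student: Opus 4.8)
The plan is to run the classical commutator (energy) argument for microlocal defect measures: test the equation against a well-chosen pseudodifferential operator and compare two evaluations of one and the same quadratic form. Since $\mu$ is real and positive and $\{a,p\}$ is real when $a$ is, it suffices by linearity (splitting $a$ into real and imaginary parts) to treat the case where $a$ is real-valued. Fix then a properly supported, self-adjoint pseudodifferential operator $A$ of order $1-m$ on $\Omega$, with principal symbol $\sigma_{1-m}(A)=a$ and Schwartz kernel supported in a fixed compact subset of $\Omega\times\Omega$ (possible since $a$ has compact $x$-support: quantize $a$ and symmetrize, which does not alter the principal symbol because $a$ is real). The object to study is the scalar sequence $\big([P,A]u_n,u_n\big)_{L^2}$.

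\emph{First evaluation: it tends to $0$.} Writing $[P,A]=PA-AP$ and using $P^\ast=P$ together with $A^\ast=A$ (all integrations by parts being legitimized by inserting cut-offs $\chi\in C_0^\infty(\Omega)$ equal to $1$ near the relevant compact supports, the commutators $[\chi,\cdot]$ and the factors $1-\chi$ producing only remainders that vanish in the limit) one obtains
\[
\big([P,A]u_n,u_n\big)=\big(Au_n,Pu_n\big)-\big(Pu_n,Au_n\big).
\]
Since $A$ has order $1-m$ and compactly supported kernel, it sends the bounded sequence $\{u_n\}\subset L^2_{loc}(\Omega)$ to a sequence $\{Au_n\}$ bounded in $H^{m-1}_{loc}(\Omega)$ and supported in a fixed compact set, while by hypothesis $Pu_n\to 0$ in $H^{1-m}_{loc}(\Omega)$, the dual of $H^{m-1}_{\mathrm{comp}}(\Omega)$. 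Hence both pairings above tend to $0$, so $\big([P,A]u_n,u_n\big)\to 0$.

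\emph{Second evaluation: it converges to the desired integral.} By the symbolic calculus the leading symbols $p\,a$ of $PA$ and $AP$ cancel, so $[P,A]$ has order $0$; more precisely, $P$ and $A$ being self-adjoint, $\tfrac1i[P,A]$ is a \emph{self-adjoint} operator of order $0$ with principal symbol $\sigma_0\!\big(\tfrac1i[P,A]\big)=\{a,p\}$, homogeneous of degree $0$ in $\xi$ and compactly supported in $x$, its remaining part having order $\le-1$. An operator of order $\le-1$ maps the weakly-null bounded sequence $\{\chi u_n\}\subset L^2$ to a strongly-null sequence (Rellich), hence contributes nothing to $\lim(\cdot\,u_n,u_n)$. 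Applying Theorem \ref{theorem4} to the order-$0$ operator $\tfrac1i[P,A]$, whose symbol already satisfies the support condition $\chi\sigma_0=\sigma_0$, gives
\[
\Big(\tfrac1i[P,A]u_n,u_n\Big)_{L^2}\ \longrightarrow\ \int_{\Omega\times S^{d-1}}\{a,p\}(x,\xi)\,d\mu(x,\xi).
\]

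\emph{Conclusion.} Combining the two evaluations, $0=\lim\big([P,A]u_n,u_n\big)=i\lim\big(\tfrac1i[P,A]u_n,u_n\big)=i\int_{\Omega\times S^{d-1}}\{a,p\}\,d\mu$, which is \eqref{5.4} for real $a$, and then for arbitrary $a$ by linearity. The step I expect to demand the most care is the bookkeeping concealed in the two evaluations: making the formal use of $P^\ast=P$ rigorous for $u_n$ that only lie in $L^2_{loc}$, keeping track of all the cut-off commutators, and verifying cleanly that the order-$(\le-1)$ remainder of $[P,A]$ is negligible in the quadratic-form limit — the standard but delicate pseudodifferential estimates that make the scheme run; the algebraic core (the two limits and their comparison) is short.
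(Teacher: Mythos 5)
Your proof is correct and is precisely the classical commutator argument: the paper states Theorem \ref{theorem6} without proof, citing Burq and G\'erard \cite{Burq-Gerard}, where exactly this scheme (evaluating $([P,A]u_n,u_n)$ two ways, with $A$ self-adjoint, properly supported, of order $1-m$ and principal symbol $a$) is carried out. The only delicate points --- interpreting the pairings when $u_n$ is merely in $L^2_{loc}$, the cut-off bookkeeping, and discarding the order-$(\le -1)$ remainder of $[P,A]$ by compactness --- are the ones you already flag, and they go through as you describe.
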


Finally we present the last important result we need, namely:

\begin{theorem}\label{theorem7}
	Let $P$ be a self-adjoint differential operator of order $m$ on $\Omega$ which admits a principal symbol $p$. Let $\{u_n\}_n$ be a bounded sequence in $L_{loc}^2(\Omega)$ which converges weakly to zero, with a microlocal defect measurement $\mu$. Let us assume that $P u_n$ converges to $0$ in $H_{loc}^{-(m-1)}$. Then the support of $\mu$, $\hbox{supp}(\mu)$, is a union of curves like $s\in I \mapsto \left(x(s), \frac{\xi(s)}{|\xi(s)|}\right)$, where $s\in I \mapsto (x(s),\xi(s))$ is a bicharacteristic of $p$.
\end{theorem}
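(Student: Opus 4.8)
The plan is to deduce the statement from Theorems \ref{theorem5} and \ref{theorem6}, reading the resulting transport identity as the invariance of $\mu$ under the bicharacteristic flow of $p$ projected onto the cosphere bundle $\Omega\times S^{d-1}$. First I would note that, since $-(m-1)\ge -m$, there is a continuous inclusion $H^{-(m-1)}_{loc}(\Omega)\hookrightarrow H^{-m}_{loc}(\Omega)$; hence the hypothesis $Pu_n\to 0$ in $H^{-(m-1)}_{loc}$ forces $Pu_n\to 0$ in $H^{-m}_{loc}$, and Theorem \ref{theorem5} gives
\begin{equation*}
\supp(\mu)\subset\Sigma:=\{(x,\xi)\in\Omega\times S^{d-1}:\ p(x,\xi)=0\}.
\end{equation*}
Next, since $P^{\ast}=P$ and $Pu_n\to 0$ in $H^{1-m}_{loc}$, Theorem \ref{theorem6} applies and yields the transport identity
\begin{equation*}
\int_{\Omega\times S^{d-1}}\{a,p\}(x,\xi)\,d\mu(x,\xi)=0
\end{equation*}
for every $a\in C^{\infty}(\Omega\times(\mathbb{R}^d\setminus\{0\}))$ homogeneous of degree $1-m$ in $\xi$ and with compact $x$-support.

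The second step would be to transfer this identity to the sphere bundle. Since $p$ is homogeneous of degree $m$ in $\xi$, the field $H_p$ is homogeneous of degree $m-1$, and by Remark \ref{remark4} the bicharacteristic flow of $p$ commutes (after reparametrization) with the dilations $\xi\mapsto\lambda\xi$; it therefore descends to a flow $\Phi_s$ on $\Omega\times S^{d-1}$ which is tangent to $\Sigma$ and generated by a vector field $Z$. Given a test function $b\in C_0^{\infty}$ on a chart of $\Omega\times S^{d-1}$, I would extend it by $a(x,\xi):=|\xi|^{1-m}\,b\bigl(x,\xi/|\xi|\bigr)$, which is admissible in Theorem \ref{theorem6}; a short computation shows that $\{a,p\}$ is homogeneous of degree $0$ in $\xi$ and restricts on $\Omega\times S^{d-1}$, up to a positive smooth factor, to the derivative $Zb$ of $b$ along $\Phi_s$. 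Substituting this into the transport identity gives
\begin{equation*}
\int_{\Omega\times S^{d-1}}(Zb)\,d\mu=0\qquad\text{for all such }b,
\end{equation*}
that is, $Z\mu=0$ in $\mathcal{D}'(\Omega\times S^{d-1})$.

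Finally I would use the elementary fact that a positive Radon measure annihilated by a smooth vector field $Z$ is invariant under the local flow $\Phi_s$ of $Z$: differentiating $s\mapsto\int(\varphi\circ\Phi_{-s})\,d\mu$ and using $Z\mu=0$ shows $(\Phi_s)_{*}\mu=\mu$. Hence, if a point of $\Omega\times S^{d-1}$ does not lie in $\supp(\mu)$, a small neighbourhood $U$ of it satisfies $\mu(U)=0$, and then $\mu(\Phi_s(U))=0$ for every $s$, so the whole orbit through that point avoids $\supp(\mu)$. Thus $\supp(\mu)$ is a closed, $\Phi_s$-invariant subset of $\Sigma$, and is therefore a union of maximal integral curves of $Z$; these are exactly the projections $s\mapsto\bigl(x(s),\xi(s)/|\xi(s)|\bigr)$ of the bicharacteristics $s\mapsto(x(s),\xi(s))$ of $p$, i.e.\ of the solutions of \eqref{5.1} along which $p\equiv 0$. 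This yields the assertion.

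The hard part will be the reduction in the second step: one must verify that the restrictions to $\Omega\times S^{d-1}$ of the symbols allowed in Theorem \ref{theorem6} (homogeneous of degree $1-m$, compactly $x$-supported) are rich enough to conclude $Z\mu=0$ as a genuine distributional identity, and one must pin down precisely the reduced field $Z$ as $H_p$ taken modulo the radial (Euler) direction on $\Sigma$, together with the correct positive density in the identification. One also has to treat the points where $H_p$ degenerates and to glue the local flow-invariance into the global statement; away from these issues the argument is routine.
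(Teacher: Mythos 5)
The paper does not prove Theorem \ref{theorem7}: it is quoted verbatim from Burq--G\'erard \cite{Burq-Gerard} in the list of results announced ``without their proofs,'' so there is no in-paper argument to compare yours against. Your plan is the standard propagation argument from that reference (and from G\'erard \cite{Gerard}): localization of $\supp(\mu)$ to the characteristic set via Theorem \ref{theorem5}, the commutator identity of Theorem \ref{theorem6} with test symbols $a=|\xi|^{1-m}b(x,\xi/|\xi|)$, and the reduction to invariance of the measure under the induced flow on the cosphere bundle. The overall structure is right and would be accepted as the intended proof.

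One point you should state more carefully: it is not true that $\{a,p\}$ restricts on $\Omega\times S^{d-1}$ to $Zb$ up to a positive factor alone. Writing $H_pa=(1-m)|\xi|^{-m}(H_p|\xi|)\,b+|\xi|^{1-m}H_p\bigl(b(x,\xi/|\xi|)\bigr)$, the first term is a genuine zeroth-order contribution that does not vanish on $\Sigma$ in general, so the transport identity reads $Z\mu+c\,\mu=0$ for some smooth real $c$, not $Z\mu=0$. This is harmless for the conclusion you want: the flow then carries $\mu$ to $e^{\int c}\,\mu$, a mutually absolutely continuous measure, so $\supp(\mu)$ is still exactly flow-invariant and the support statement follows. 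You flag this in your closing paragraph as something to pin down, which is the correct instinct; just be aware that the clean identity $Z\mu=0$ as written in your second step is false in general, while the support argument survives. For the operator actually used in the paper (the wave operator with symbol \eqref{5.5}) the degeneracy issue you raise does not arise, since $H_p$ is nowhere radial on the characteristic set away from the zero section.
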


Let us consider the wave operator in a inhomogeneous  medium:
\begin{eqnarray*}
	\rho(x) \partial_t^2 - \sum_{j=1}^n \partial_{x_j}\left[K(x) \partial_{x_j}\right].
\end{eqnarray*}

Using the notation $D_j=\frac{1}{i}\partial_j$ we can write
\begin{eqnarray*}
	P(t, x, D_t, D_{x})= - \rho(x) D_t^2 +   \sum_{j=1}^n D_{x_j}[K(x)D_{x_j}],\quad D_x=(D_{x_1},\cdots, D_{x_n}),
\end{eqnarray*}
whose principal symbol $p(t,x,\tau,\xi)$ is given by
\begin{eqnarray}\label{5.5}
p(t,x,\tau,\xi)=-\rho(x) \tau^2 + K(x) \,\xi\cdot \xi,\quad\xi=(\xi_1,\cdots,\xi_n),
\end{eqnarray}
where $t\in \mathbb{R}$,~ $x\in \Omega\subset \mathbb{R}^d$,~ $(\tau,\xi)\in \mathbb{R}\times \mathbb{R}^d$.
%and $\kappa(x):=1-g_0 a(x)$ is a $C^\infty$ function on $\Omega$ verifying
%\begin{eqnarray*}
%\kappa(x) :=1-g_0 a(x) \geq 1-g_0 ||a||_\infty := l>0,
%\end{eqnarray*}
%since we are assuming that $l:=1-g_0||a||_\infty >0$, where $g_0:=\int_0^\infty g(s)\,ds$.

Next, we shall describe the bicharacteristics of $p$. These do not change if we multiply $p$ by a non-null function, rather study the hamiltonian curves of
\begin{eqnarray}\label{5.6}
\tilde p(t,x,\tau,\xi)= \frac12 \left(\frac{K(x)}{\rho(x)}\,\xi\cdot\xi-\tau^2 \right).
\end{eqnarray}

We have
\begin{equation}\label{5.7}
\left\{
\begin{aligned}
&\dot{t} = \frac{\partial \tilde p}{\partial \tau}=-\tau,\\
&\dot{x} = \frac{\partial \tilde p}{\partial \xi}= \frac{K(x)}{\rho(x)}\xi,\\
&\dot{\tau}=-\frac{\partial \tilde p}{\partial t}=0,\\
&\dot{\xi}=-\frac{\partial \tilde p}{\partial x}=-\frac12 \nabla\left(\frac{K(x)}{\rho(x)}\right)(\xi\cdot\xi).
\end{aligned}
\right.
\end{equation}

Introducing the function $G(x):=\left(\frac{K(x)}{\rho(x)}\right)^{-1}$, equations in $(x,\xi)$ become in
\begin{eqnarray}\label{5.8}
\dot{x}=K(x)\,\xi \Longleftrightarrow \xi=G(x) \dot{x},
\end{eqnarray}
which implies, considering the notation $\left<x,\xi\right>=x\cdot \xi$, that
\begin{eqnarray*}
\dot{\xi}=\left(G(x)\dot{x}\right)^{\cdot}
&=& \frac12 \nabla G(x)\,\dot{x}\cdot\dot{x}.\nonumber
\end{eqnarray*}

Once $\tilde p$ is null on each the bicharacteristic curves from \eqref{5.5} we deduce that
\begin{eqnarray}\label{5.9}
\frac{K(x)}{\rho(x)}\,\xi\cdot \xi = \tau^2=\hbox{ constant on the curve},
\end{eqnarray}

On the other hand by \eqref{5.8} and \eqref{5.9}, one has
\begin{eqnarray}\label{5.10}
G(x)\dot{x}\cdot\dot{x} =\frac{K(x)}{\rho(x)}\,\xi\cdot \xi.
\end{eqnarray}

Combining \eqref{5.9} and \eqref{5.10} we deduce
\begin{eqnarray}\label{5.11}
G(x)\dot{x}\cdot\dot{x}=\frac{K(x)}{\rho(x)}\,\xi\cdot \xi = \tau^2=\hbox{ constant on the curve},
\end{eqnarray}
i.e., the quantity $ G(x)\,\dot{x}\cdot\dot{x}$ is preserved under the flow.

By \eqref{5.9} and \eqref{5.11} we can write
\begin{eqnarray}\label{5.12}
\frac{d}{ds}\frac{G(x)\dot{x}}{\sqrt{G(x)\,\dot{x}\cdot \dot{x}}}=\frac12 \frac{\nabla G(x)\,\dot{x}\cdot \dot{x}}{\sqrt{G(x)\,\dot{x}\cdot \dot{x}}}.
\end{eqnarray}

Setting
\begin{eqnarray*}
	L(x,\dot{x}):=\sqrt{G(x)\,\dot{x}\cdot\dot{x}},
\end{eqnarray*}
from the last identity yields
\begin{eqnarray*}
	\frac{d}{ds}\frac{\partial}{\partial \dot{x}} L(x,\dot{x})=\frac{\partial}{\partial x}L(x,\dot{x}),
\end{eqnarray*}
which is the \underline{Euler-Lagrange} equation associated to $L$, namely, the \underline{geodesic} equation for the metric $G$ of $\Omega$.

Reversely, if
\begin{eqnarray*}
	\alpha \mapsto x(\alpha),
\end{eqnarray*}
is a geodesic for the metric $G$ on $\Omega$, and parameterizing the curve $x$ by the curvilinear abscissa $\sigma$ defined by
\begin{eqnarray}\label{5.13}
\frac{d \sigma}{d \alpha}=\sqrt{G(x(\alpha))\,\dot{x}(\alpha)\cdot \dot{x}(\alpha)},
\end{eqnarray}
the equation \eqref{5.12} becomes
\begin{eqnarray}\label{5.14}
\frac{d}{d\tau}\left(G(x) \frac{dx}{d\tau} \right)=\frac12 \nabla G(x)\, \frac{dx}{d\tau}\cdot \frac{dx}{d\tau},
\end{eqnarray}
with
\begin{eqnarray*}
	G(x)\,\frac{dx}{d\sigma} \cdot \frac{dx}{d\sigma}=1.
\end{eqnarray*}

We find so \eqref{5.7}, \eqref{5.8} and \eqref{5.9} setting, for instance, $s=-\frac{\sigma}{\tau}$. We observe that $\frac{dt}{d\sigma}=1$. As a conclusion, we have proved the following result:

\begin{proposition}\label{proposition5.1}
	Unless a change of variables, the bicharacteristics of \eqref{5.5} are curves of the form
	\begin{eqnarray*}
		t \mapsto \left(t, x(t), \tau, -\tau\left(\frac{K(x(t))}{\rho(x(t))}\right)^{-1}\dot{x}(t) \right),
	\end{eqnarray*}
	where $t \mapsto x(t)$ is a geodesic of the metric $G=\left(A\right)^{-1}=(\mathbf{g}_{ij})$ on $\Omega$, parameterized by the curvilinear abscissa.
\end{proposition}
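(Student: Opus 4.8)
\subsection*{Proof plan for Proposition \ref{proposition5.1}}

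The plan is to start from the principal symbol in \eqref{5.5} and exploit the fact, recorded in Remark \ref{remark4}, that the bicharacteristic set and the bicharacteristic curves (up to reparametrization) are unchanged when $p$ is multiplied by a smooth nonvanishing factor. Dividing by $-2\rho(x)$ we may therefore replace $p$ by $\tilde p$ as in \eqref{5.6} and write out the Hamilton--Jacobi system \eqref{5.7} for $\tilde p$. The equation $\dot\tau=-\partial_t\tilde p=0$ shows that $\tau$ is a constant of the motion; on a genuine bicharacteristic one also has $\tilde p\equiv0$, i.e. $\tfrac{K(x)}{\rho(x)}\xi\cdot\xi=\tau^2$, so either $\tau\neq0$ or, by the ellipticity \eqref{1.2} of $K$, $\xi\equiv0$ and the curve is stationary in $(x,\xi)$; this trivial stratum is discarded. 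For $\tau\neq0$ the equation $\dot t=-\tau$ lets us take $t$ as the curve parameter, which already accounts for the first slot $(t)$ and the constancy of $\tau$ in the asserted formula.

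Next I would carry out the reduction to the geodesic flow of $G=(K/\rho)^{-1}$. From the $\dot x$-equation in \eqref{5.7} one has $\dot x=\tfrac{K(x)}{\rho(x)}\xi$, equivalently $\xi=G(x)\dot x$; this is exactly the last slot of the claimed curve once the reparametrization $s=-\sigma/\tau$ is taken into account, which is where the factor $-\tau$ comes from. Differentiating $\xi=G(x)\dot x$ along the flow and comparing with the $\dot\xi$-equation gives a closed second-order ODE for $x$; using that $G(x)\dot x\cdot\dot x=\tfrac{K(x)}{\rho(x)}\xi\cdot\xi=\tau^2$ is conserved (this is \eqref{5.9}--\eqref{5.11}), the manipulation in \eqref{5.12} rewrites this ODE as
\begin{equation*}
\frac{d}{ds}\frac{\partial L}{\partial \dot x}(x,\dot x)=\frac{\partial L}{\partial x}(x,\dot x),\qquad L(x,\dot x)=\sqrt{G(x)\dot x\cdot\dot x},
\end{equation*}
i.e. the Euler--Lagrange equation of the length functional, which is the geodesic equation for the metric $G$. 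Hence the spatial projection of any bicharacteristic is, up to reparametrization, a geodesic of $G$.

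Finally I would establish the converse inclusion and assemble the statement: starting from a geodesic $\alpha\mapsto x(\alpha)$ of $G$, reparametrize it by the curvilinear abscissa $\sigma$ via \eqref{5.13} so that $G(x)\tfrac{dx}{d\sigma}\cdot\tfrac{dx}{d\sigma}=1$ and $\tfrac{dt}{d\sigma}=1$ (so one recovers \eqref{5.14}), then set $s=-\sigma/\tau$, $\xi=G(x)\dot x$, and verify directly that $(t,x,\tau,\xi)$ solves the full system \eqref{5.7} with $\tilde p\equiv0$; this produces precisely the curve $t\mapsto\bigl(t,x(t),\tau,-\tau G(x(t))\dot x(t)\bigr)$. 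The computations themselves are already laid out in \eqref{5.6}--\eqref{5.14}, so the genuine content is the bookkeeping: matching the Hamiltonian parameter $s$ to arc length, tracking the factor $-\tau$, and checking that the Legendre-type correspondence $\xi\leftrightarrow\dot x$ intertwines the Hamiltonian and Lagrangian flows. I expect the main obstacle to be exactly this reparametrization bookkeeping, together with the short but necessary verification that the degenerate locus $\tau=0$ contributes nothing, rather than any single hard estimate.
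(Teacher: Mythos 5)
Your proposal is correct and follows essentially the same route as the paper's own derivation: passing from $p$ to the normalized symbol $\tilde p$ of \eqref{5.6} via Remark \ref{remark4}, writing the Hamilton system \eqref{5.7}, using the conservation laws \eqref{5.9}--\eqref{5.11} to obtain the Euler--Lagrange equation for $L(x,\dot x)=\sqrt{G(x)\dot x\cdot\dot x}$, and reversing the construction through the curvilinear abscissa \eqref{5.13} with $s=-\sigma/\tau$. The only additions are cosmetic (your explicit dismissal of the degenerate stratum $\tau=0$, which the paper leaves implicit, and a harmless sign in the normalizing factor $-2\rho$ versus $2\rho$).
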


\subsection{Part II}
In what follows we shall give examples of metrics $G=\left(\frac{K}{\rho}\right)^{-1}$ such that its bicharacteristics enter in the region $\omega$ where $a(x)>0$.
We consider a Riemannian manifold $M$ with Riemannian metric $G=\left< \cdot, \cdot\right>$ and Riemannian connection $\tilde\nabla$.
We denote its Laplace-Beltrami operator by $\tilde \Delta$.
Fix a coordinate system $(x_1,\ldots, x_n)$ on $M$, denote $G_{ij}=\left<\partial /\partial x_i,\partial /\partial x_j\right>$, let $G^{ij}=K_{ij}/\rho$ be the inverse matrix of $G_{ij}$ and set $\rho =\sqrt{\det (G_{ij})}$. The Laplace-Beltrami operator in this coordinate system is given by
\[
\tilde \Delta u=\frac{1}{\sqrt{\det G_{ij}}}\sum_{i,j=1}^n \frac{\partial}{\partial x_i}\left( \sqrt{\det G_{ij}}G^{ij} \frac{\partial u}{\partial x_j} \right)=\frac{1}{\rho(x)} \text{div}(K(x)\nabla u)
\]
where $\nabla$ is the usual gradient correspondent to the Euclidean metric on the domain $(x_1,\ldots, x_n)$.

\begin{remark}\label{remark6}
	It is straightforward that $\det K=\rho^{n-2}$.
	For $n\geq 3$, it implies that there exist a bijection between the Riemannian metrics $G$ and the operators $K$ given by $G\mapsto K=\rho G^{-1}$ and $K \mapsto G= (\det K)^{\frac{1}{n-2}}K^{-1}$.
	But for $n=2$, we always have $\det K=1$ and $K$ does not determine the Riemannian metric.
	In fact, observe that we have $K=Id$ in the hyperbolic plane $\mathbb{H}^2$ given in Example \ref{example5.1} below and in the half plane $\mathbb R^2_+$ endowed with Euclidean metric. Although they have the same $K$, they are not isometric because $\mathbb{H}^2$ have constant Gaussian curvature equal to $-1$ and the Euclidean open subset $\mathbb{R}^2_+$ have constant Gaussian curvature  equal to $0$.
\end{remark}

The Hessian of a smooth function $\phi:M\rightarrow \mathbb R$ is a symmetric 2-form on $M$ defined as
\begin{equation}\label{5.15}
\tilde\nabla^2 \phi(X,Y)=XY(\phi)-\tilde\nabla_XY (\phi),
\end{equation}
where $X$ and $Y$ are vector fields on $M$.
Here $X(\phi)$ denotes the directional derivative of $\phi$ in the direction of the vector field $X$.
It is well known that the value of $\nabla^2(X,Y)(p)$ depends only on the values of $X$ and $Y$ on $p$.
It means that the right-hand side of \eqref{5.15} does not depend on the smooth extension we take for $X(p)$ and $Y(p)$.

A curve $\gamma:(-\varepsilon,\varepsilon) \rightarrow M$ is a geodesic if $\tilde \nabla_{\gamma^\prime (t)}\gamma^\prime (t)\equiv 0$.

The next lemma states the main technical condition we need in order to let some subsets outside $\omega$ without damping.

\begin{lemma}\label{lemma5.1}
	Let $M$ be a complete Riemannian manifold, eventually with boundary, and let $\phi:\text{int}M\rightarrow \mathbb R$ be a smooth function. Suppose that $\phi$ is bounded and $\nabla^2 \phi(v,v)\geq c\Vert v \Vert^2$ on an open subset $U\subset \text{int} M$, where $c>0$ is a constant. Then any geodesic on $U$ hits $\partial  U$.
\end{lemma}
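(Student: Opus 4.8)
The plan is to argue by contradiction, turning the Hessian lower bound into quadratic growth of $\phi$ along a geodesic that would refuse to leave $U$. Suppose, for contradiction, that some (non-constant) geodesic issued from a point of $U$ remains in $U$ for all forward times; since $M$ is complete and $U\subset\mathrm{int}\,M$ is open, such a geodesic cannot reach $\partial M$ while it stays in $U$, so it is defined on $[0,+\infty)$ and we may write it as $\gamma:[0,+\infty)\to U$. Geodesics have constant speed, so after rescaling we may assume $\|\gamma'(t)\|\equiv\|\gamma'(0)\|=:v_0>0$. Because $\gamma(t)\in U\subset\mathrm{int}\,M$ and $\phi$ is smooth on $\mathrm{int}\,M$, the function $f(t):=\phi(\gamma(t))$ is well defined and smooth on $[0,+\infty)$.

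The next step is the two-fold differentiation of $f$. One has $f'(t)=d\phi_{\gamma(t)}(\gamma'(t))=\gamma'(t)(\phi)$, and differentiating once more and using the definition \eqref{5.15} of the Hessian together with the geodesic equation $\tilde\nabla_{\gamma'(t)}\gamma'(t)\equiv 0$ gives
\begin{equation*}
f''(t)=\tilde\nabla^2\phi\big(\gamma'(t),\gamma'(t)\big)+\big(\tilde\nabla_{\gamma'(t)}\gamma'(t)\big)(\phi)=\tilde\nabla^2\phi\big(\gamma'(t),\gamma'(t)\big).
\end{equation*}
Since $\gamma(t)\in U$ for every $t\ge 0$, the hypothesis yields $f''(t)\ge c\,\|\gamma'(t)\|^2=c\,v_0^2>0$ for all $t\ge 0$. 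Integrating this inequality twice we obtain $f(t)\ge f(0)+f'(0)\,t+\tfrac{c v_0^2}{2}\,t^2$, hence $\phi(\gamma(t))=f(t)\to+\infty$ as $t\to+\infty$, which contradicts the assumed boundedness of $\phi$.

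Therefore no non-constant geodesic can remain in $U$ for all forward times. Consequently, a geodesic $\gamma$ issued from a point of $U$ leaves $U$ at some finite time; letting $t^{*}:=\sup\{t\ge 0:\gamma([0,t])\subset U\}$, the point $\gamma(t^{*})$ belongs to $\overline{U}\setminus U$, and since $U$ is open, $\gamma(t^{*})\in\partial U$. (Should $\gamma$ fail to be defined beyond a finite time because it meets $\partial M$ first, the corresponding boundary limit point again lies in $\overline{U}\setminus U\subset\partial U$, because $U\subset\mathrm{int}\,M$.) The only point needing a little care — and the mild ``obstacle'' — is precisely this bookkeeping: ensuring the geodesic is defined long enough (handled by completeness of $M$ and the openness of $U$ inside $\mathrm{int}\,M$), and excluding degenerate constant geodesics so that $v_0>0$, which is implicit in the statement since a one-point geodesic sitting in $U$ would never meet $\partial U$.
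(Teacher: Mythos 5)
Your proof is correct and follows essentially the same route as the paper's: differentiate $\phi\circ\gamma$ twice, use the geodesic equation to reduce $f''$ to the Hessian, bound it below by $c\,v_0^2>0$, and integrate to contradict the boundedness of $\phi$. The extra bookkeeping you add (completeness, the exit time $t^{*}$, excluding constant geodesics) is sound and only makes explicit what the paper leaves implicit.
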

\begin{proof}
	Let $\gamma$ be a geodesic on $U$. Then
    \begin{align*}
	\frac{d^2}{dt^2}\phi(\gamma(t))= {} &\gamma^\prime(t) \gamma^\prime(t) \phi\\
	= {} &\tilde \nabla^2 \phi(\gamma^\prime (t),\gamma^\prime (t))+ (\tilde \nabla_{\gamma^\prime(t)}\gamma^\prime(t))(\phi)\\
	= {} & \tilde\nabla^2 \phi(\gamma^\prime (t),\gamma^\prime (t))\geq c \Vert \gamma^\prime(t)\Vert^2
	\end{align*}
	where the last equality holds because $\gamma$ is a geodesic. Observe that the last term is a positive real constant because $\Vert \gamma^\prime(t) \Vert$ does not depend on $t$.
	
	Then $\phi(\gamma(t))$ is a smooth real valued function which second derivative is  bounded below by a strictly positive constant.
	Then it is not difficult to prove that if $\gamma$ is defined on a interval $(a,\infty)$, then $\lim_{t\rightarrow \infty} \phi(\gamma(t))=\infty$.
	Analogously $\lim_{t\rightarrow -\infty} \phi(\gamma(t))=\infty$ whenever $\gamma$ is defined on a interval $(-\infty,a)$.
	But neither of the cases are possible if $\gamma$ remain forever in $U$ because $\phi$ is bounded there.
	Therefore $\gamma$ must hit $\partial U$.
\end{proof}

We would like to know if $W \subset \Omega-\omega$ can be left without damping.
If $\bar W$ is contained in an open subset $V\subset \text{int}\Omega$ with Riemannian metric $G$ and $\phi:V\rightarrow \mathbb R$ is a smooth function such that $\tilde \nabla^2 \phi(v,v)> 0$ for every $v \neq 0$, then $W$ can be left without damping.
In fact, we are able to find an open subset $U\subset \subset V$ containing $W$, $\phi\vert_U$ is bounded, satisfies $\tilde \nabla^2 (\phi\vert_U)(v,v)\geq c\Vert v\Vert^2$ for some $c>0$ and we can use Lema \ref{lemma5.1} for $\phi\vert_U$.

A particular a common instance is the case when $W_1\subset V_1, \ldots, W_m\subset V_m$ is a pairwise disjoint family of compact subsets in $\Omega-\omega$, where each $W_i$ satisfies the conditions of $W$ of the former paragraph and $\phi_i:V_i \rightarrow \mathbb R$ are the respective smooth functions with positive Hessian.
We can choose $V_1,\ldots , V_m$ such that they are pairwise disjoint.
Let $U_i \subset \subset V_i$ be open subsets containing $W_i$.
Due to the differentiable version of the Urysohn's lemma, there exist a differentiable function $\eta:\Omega \rightarrow \mathbb R$ such that $\eta \equiv 1$ on $\cup_i W_i$ and $\eta \equiv 0$ on $\Omega - \cup_i U_i$.
Now define
\[
\phi(x)=
\left\{
\begin{array}{c}
\eta (x)\phi_i(x) \text{ if }x\in V_i \\
0 \text{ \ \ \ otherwise.}
\end{array}
\right.
\]
Then $\phi$ is a bounded smooth function which has positive Hessian in a neighborhood of $\cup_i W_i$ and we have proved the following useful result:

\begin{proposition}\label{proposition5.2}
	Let $\{W_i\}_{i=1,\ldots, m}$ a family of pairwise disjoint compact subsets of $V-\omega$.
	Suppose that there exist neighborhoods $V_i$ of $W_i$ and smooth functions $\phi:V_i \rightarrow \mathbb R$ with positive Hessian. Then $\cup_i W_i$ can be left without damping.
\end{proposition}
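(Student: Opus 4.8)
The plan is to make precise the construction sketched in the discussion preceding the statement: one assembles the local functions $\phi_i$ into a single globally defined, bounded function $\phi$ on $\Omega$ whose Hessian relative to $G$ is positive definite on a full open neighbourhood of $\bigcup_{i}W_i$, and then invokes Lemma \ref{lemma5.1} to forbid geodesics of $G$ trapped near $\bigcup_{i}W_i$. Granting this, one chooses the damping set $\omega$ to be a neighbourhood of $\partial\Omega$ that misses $\bigcup_{i}W_i$ but whose complement in $\Omega$ is contained in the open set where the Hessian is positive; a geodesic of $G$ that never enters $\omega$ would then stay forever in that open set, contradicting Lemma \ref{lemma5.1}. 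Hence Assumption \ref{assumption1.2} persists while $a$ and $b$ are permitted to vanish identically on $\bigcup_{i}W_i$, which is what ``leaving $\bigcup_{i}W_i$ without damping'' means.

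First I would organise the local data. Since the $W_i$ are pairwise disjoint compact subsets of $\operatorname{int}\Omega$, after intersecting each given $V_i$ with suitable open sets I may assume $V_1,\dots,V_m$ are pairwise disjoint. Choose nested relatively compact open sets $W_i\subset O_i\subset\subset U_i\subset\subset V_i$. By the smooth version of Urysohn's lemma there is $\eta\in C^{\infty}(\Omega)$ with $0\leq\eta\leq1$, $\eta\equiv1$ on $\bigcup_{i}O_i$ and $\supp\eta\subset\bigcup_{i}U_i$. Set $\phi(x)=\eta(x)\phi_i(x)$ for $x\in V_i$ and $\phi(x)=0$ for $x\notin\bigcup_{i}U_i$. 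These prescriptions are consistent, since the $V_i$ are disjoint and $\eta\equiv0$ on $V_i\setminus U_i$; the resulting $\phi$ is smooth on $\Omega$, and $\supp\phi\subset\bigcup_{i}\overline{U_i}$ is compact, so $\phi$ is bounded.

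Next I would check the Hessian and close the argument. On the open set $\mathcal{O}:=\bigcup_{i}O_i$ one has $\eta\equiv1$, hence $\phi=\phi_i$ on $O_i$ and therefore $\tilde\nabla^{2}\phi(v,v)=\tilde\nabla^{2}\phi_i(v,v)>0$ for every $v\neq0$. Fixing a relatively compact open set $U'$ with $\bigcup_{i}W_i\subset U'\subset\subset\mathcal{O}$, compactness of $\overline{U'}$ and continuity yield a constant $c>0$ with $\tilde\nabla^{2}\phi(v,v)\geq c\Vert v\Vert^{2}$ for all $x\in U'$ and all tangent vectors $v$ at $x$. Since $\phi$ is bounded, Lemma \ref{lemma5.1} applied to $\phi$ on $U'$ shows that no maximally extended geodesic of $G$ can remain inside $U'$. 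Finally, pick $U''$ with $\bigcup_{i}W_i\subset U''\subset\subset U'$ and set $\omega:=\Omega\setminus\overline{U''}$; because $\overline{U''}\subset\operatorname{int}\Omega$, this $\omega$ is a neighbourhood of $\partial\Omega$ in $\Omega$, it is disjoint from $\bigcup_{i}W_i$, and any geodesic of $G$ avoiding $\omega$ would lie in $\Omega\cap\overline{U''}\subset U'$ for all time, which is impossible. Thus $\omega$ still geometrically controls $\Omega$ and the choice $a\equiv b\equiv0$ on $\bigcup_{i}W_i$ is admissible; the uniform control time $T_0$ demanded in Assumption \ref{assumption1.2} is recovered from compactness of the unit cotangent bundle, exactly as in the remarks following Proposition \ref{proposition5.1}.

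The main obstacle is technical rather than conceptual: the Hessian of the glued function must be positive definite on a genuine open set, not merely on the closed set $\bigcup_{i}W_i$, while $\phi$ stays globally smooth after the cut-off. This is exactly why the intermediate collars $O_i$ are interposed — on them $\eta$ is forced to equal $1$, so $\phi$ coincides there with the model $\phi_i$ and inherits its positive Hessian, whereas the transition of $\eta$ from $1$ to $0$ happens in the shell $U_i\setminus O_i$, away from $W_i$. The only further point to monitor is that $U'$ and $U''$ can be chosen relatively compact inside $\operatorname{int}\Omega$, which follows from $\overline{U_i}\subset\subset V_i\subset\operatorname{int}\Omega$; this is what makes the resulting $\omega$ a genuine boundary collar.
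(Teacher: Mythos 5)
Your construction is essentially the paper's own proof: the paper also glues the local functions via a smooth Urysohn cut-off $\eta$ supported in $\bigcup_i U_i$ with $\eta\equiv 1$ near $\bigcup_i W_i$, obtains a bounded smooth $\phi$ with positive Hessian on a neighbourhood of $\bigcup_i W_i$, and concludes by Lemma \ref{lemma5.1}. Your version merely makes explicit two points the paper leaves implicit (the intermediate collars $O_i$ forcing $\eta\equiv 1$ on an open set, and the compactness argument giving the uniform constant $c>0$), so it is correct and follows the same route.
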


Of course $W=\Omega - \omega$ is a particular case of Proposition \ref{proposition5.2}.

Let $M$ be a Riemannian manifold and $p\in M$.
Let $d:M\rightarrow \mathbb R$ be the distance function from $p$.
Then $d^2$ is a smooth function defined on the ball $B(p,r)$ inside the injectivity radius of $p$.
Moreover we can control $r$ such that $d^2$ have positive Hessian on $B(p,r)$.
More precisely we have the following well theorem:

\begin{theorem}[See \cite{JOST}] \label{theorem8}
	Let $M$ be a Riemannian manifold, consider $p\in M$ and let
	\[
	B(0,r):=\{ v\in T_pM \quad :\quad \| v\|\leq r\}
	\]
	such that the exponential map $\exp_p:B(0,r) \to M$ is a diffeomorphism over its image. Denote by $B(p,r)=\exp_p(B(0,r))$ the geodesic ball with center $p$ and radius $r$.
	Suppose that the sectional curvature $K$ on $B(p,r)$ satisfies the inequality
	\[
	K\leq \mu,\hspace{0.5cm} \mu\geq 0,
	\]
	and that
	\begin{equation}\label{5.16}
	r<\frac{\pi}{2 \sqrt{\mu}}
	\end{equation}
	if $\mu > 0$. Then the  square of the distance to $p$ function $d^2(\cdot,p):B(p,r)\rightarrow \mathbb R$ has positive definite Hessian.
\end{theorem}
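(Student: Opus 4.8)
This is a classical comparison--geometry statement (cf.\ \cite{JOST}); the plan is to reduce the positivity of $\mathrm{Hess}(d^2)$ to the Hessian comparison theorem together with an explicit computation in the constant--curvature model space. Write $f=\frac12 d^2(\cdot,p)$ and $\varrho=d(\cdot,p)$. Since $\exp_p$ is a diffeomorphism on $B(0,r)$, the function $\varrho$ is smooth on $B(p,r)\setminus\{p\}$, is realized by the radial unit--speed geodesics issued from $p$, and $\nabla\varrho=\partial_r$ is the radial unit field; note also $\mathrm{Hess}(d^2)=2\,\mathrm{Hess}\,f$, so it suffices to treat $f$.

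First I would record the algebraic identity relating the two Hessians. From $\nabla f=\varrho\,\partial_r$ one gets, for vector fields $X,Y$,
\[
\mathrm{Hess}\,f(X,Y)=\langle\partial_r,X\rangle\,\langle\partial_r,Y\rangle+\varrho\,\mathrm{Hess}\,\varrho(X,Y),
\]
using $X(\varrho)=\langle\partial_r,X\rangle$ and $\langle\nabla_X\partial_r,Y\rangle=\mathrm{Hess}\,\varrho(X,Y)$. Because $\mathrm{Hess}\,\varrho(\partial_r,\cdot)\equiv 0$, this puts $\mathrm{Hess}\,f$ in block--diagonal form for the orthogonal splitting $T_qM=\mathbb{R}\partial_r\oplus(\partial_r)^{\perp}$: it equals $1$ on the radial line and $\varrho\,\mathrm{Hess}\,\varrho$ on $(\partial_r)^{\perp}$. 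Hence positive definiteness of $\mathrm{Hess}\,f$ on $B(p,r)\setminus\{p\}$ is equivalent to positive definiteness of $\mathrm{Hess}\,\varrho$ restricted to $(\partial_r)^{\perp}$ there; at the center $p$ one checks in normal coordinates that $\mathrm{Hess}\,f(p)=g_p$, and this matches the formula by continuity as $\varrho\to0$.

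Next I would invoke the Hessian comparison theorem. The shape operator $S=\mathrm{Hess}\,\varrho|_{(\partial_r)^{\perp}}$ of the geodesic spheres satisfies the radial Riccati equation $S'=-S^2-R_{\partial_r}$ with asymptotics $S\sim\tfrac1\varrho\,\mathrm{Id}$ as $\varrho\to0$. The hypothesis $K\le\mu$ gives $R_{\partial_r}\le\mu\,\mathrm{Id}$, and a standard Riccati comparison then yields $S\ge S_\mu$, where $S_\mu$ is the shape operator of geodesic spheres in the space form of curvature $\mu$: $S_\mu(\varrho)=\sqrt{\mu}\cot(\sqrt{\mu}\,\varrho)\,\mathrm{Id}$ if $\mu>0$ and $S_0(\varrho)=\tfrac1\varrho\,\mathrm{Id}$ if $\mu=0$. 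Therefore, for $X\perp\partial_r$,
\[
\mathrm{Hess}\,\varrho(X,X)\ \ge\ \sqrt{\mu}\cot(\sqrt{\mu}\,\varrho)\,|X|^2,
\]
whose right--hand side is strictly positive exactly when $\sqrt{\mu}\,\varrho<\pi/2$. Since every point of $B(p,r)$ has $\varrho<r<\pi/(2\sqrt{\mu})$ by \eqref{5.16} (and for $\mu=0$ the bound $1/\varrho>0$ is automatic), we conclude $\mathrm{Hess}\,\varrho>0$ on $(\partial_r)^{\perp}$, hence $\mathrm{Hess}\,f$, and so $\mathrm{Hess}(d^2)$, is positive definite on all of $B(p,r)$.

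The only genuinely non--routine point, and the main obstacle, is the Hessian/Riccati comparison $S\ge S_\mu$: one must justify it as an inequality of symmetric operators, using that both solutions of the matrix Riccati equation have the same singular behavior at $\varrho=0$ and that $S_\mu$ does not blow up to $-\infty$ on $[0,r)$ — which is precisely where the radius restriction \eqref{5.16} enters, and why the estimate degenerates at the equator of the model sphere. I would either cite this comparison result from \cite{JOST} directly or reproduce the short scalar ODE argument after diagonalizing. The Hessian identity for $f$ and the behavior at the center $p$ are routine and can be dispatched quickly.
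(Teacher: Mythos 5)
Your argument is correct and is exactly the standard Hessian/Riccati comparison proof that the paper itself defers to by citing \cite{JOST} (the paper gives no proof of its own for this theorem). The decomposition $\mathrm{Hess}\,\tfrac12 d^2 = d\varrho\otimes d\varrho + \varrho\,\mathrm{Hess}\,\varrho$, the comparison $\mathrm{Hess}\,\varrho \geq \sqrt{\mu}\cot(\sqrt{\mu}\,\varrho)$ on $(\partial_r)^{\perp}$ (valid since $\exp_p$ is a diffeomorphism on $B(0,r)$, so there are no conjugate points), and the observation that positivity of $\varrho\sqrt{\mu}\cot(\sqrt{\mu}\,\varrho)$ is precisely the condition $\varrho<\pi/(2\sqrt{\mu})$ together give the claim; nothing is missing.
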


We can use this theorem in order to construct Riemannian metrics on $\Omega$ such that $V-\omega$ can be left without damping.
For instance, if $(q_1,\ldots, q_d)\in \mathbb R^d$ is a fixed point, then we can consider the function $d^2(x_1,\ldots, x_d)=\sum_{i=1}^d(x_i-q_i)^2$ restricted to $\Omega$ and put a Riemannian metric with sufficiently small constant curvature $K$ in $B(q,r)\supset \Omega - \omega$ such that $B(q,r)$ satisfies the conditions of Theorem \ref{theorem8}. Then $V-\omega$ can be left without damping.

\begin{example}\label{example5.1}
	Denote the hyperbolic plane by $\mathbb{H}^2$. The half plane model of $\mathbb{H}^2$ is given by the set $\mathbb{R}^2_+ = \{(x_1,x_2) \in \mathbb{R}^2; x_2>0\}$ endowed with the Riemannian metric $G_{ij}=(1/x_2^2) \delta_{ij}$, where $\delta_{ij}$ is the Kroenecker delta function.
	The geodesics in $\mathbb{H}^2$ are the vertical lines and the semicircles with the origin in the $x$-axis.
	Every geodesic is a minimizer, that is, the length of a geodesic that connects $x$ and $y$ is the distance between $x$ and $y$.
	Moreover curves that are arbitrarily close to the $x$-axis (for instance, geodesics in $\mathbb{H}^2$) have infinite length.
	
	Bounded domains $\Omega$ in $\mathbb H^2$ have compact closures in $\mathbb H^2$.
	From all these facts, it is straightforward that every geodesic does not remain in any bounded domain.
	
	Alternatively we could observe that $\mathbb{H}^2$ has constant Gaussian curvature equal to $-1$ and that the exponential maps in $\mathbb{H}^2$ are diffeomorphisms $\exp_p:T_pM \rightarrow \mathbb H^2$ for every $p\in \mathbb H^2$.
	Now we can use Theorem \ref{theorem8} in order to reach the same conclusion.
\end{example}


\begin{thebibliography}{40}
	
	
	%%%%%%%%%%%%%%%%%%%%%%%%%%%%%%%%%%%%%%%%%%%
	
	
	\bibitem{Andrade} Andrade, D. and Mognon, A. Global Solutions for a System of Klein-Gordon Equations with Memory. {\em Bol. Soc. Parana. Mat.} (3) 21 (2003), no. 1-2, 127-138.
	
	\bibitem{Bardos} Bardos, C., Lebeau, G. and Rauch, J. Sharp sufficient conditions for the observation, control, and stabilization of waves from the boundary. {\em SIAM J. Control Optim.} 30 (1992), no. 5, 1024-1065.
	
	\bibitem{Burq-Gerard-CR}  Burq, N. and G\'erard, P. Condition n\'ecessaire et suffisante pour la contr\^olabilit\'e exacte des ondes. (French) [A necessary and sufficient condition for the exact controllability of the wave equation] C. R. Acad. Sci. Paris S\'er. I Math. 325 (1997), no. 7, 749-752.
	
	\bibitem{Burq-Gerard} Burq, N. and G\'erard, P. Contr\^ole Optimal des \'equations aux d\'eriv\'ees partielles. 2001, Url:http://www.math.u-psud.fr/~burq/articles/coursX.pdf
	
	\bibitem{Cavalcanti3} Cavalcanti, M. M., Domingos Cavalcanti, V. N., Fukuoka, R.,  Soriano, J. A.  Asymptotic stability of the wave equation on compact surfaces and locally distributed damping-a sharp result. {\em Trans. Amer. Math. Soc.} 361 (2009), no. 9, 4561-4580.
	
	\bibitem{Cavalcanti2}  Cavalcanti, M. M., Domingos Cavalcanti, V. N., Fukuoka, R.,  Soriano, J. A.  Asymptotic stability of the wave equation on compact manifolds and locally distributed damping: a sharp result. {\em Arch. Ration. Mech. Anal.} 197 (2010), no. 3, 925-964.
	
	\bibitem{Cavalcanti4} Cavalcanti, M. M., Domingos Cavalcanti, V. N., Prates Filho, J. S., Soriano, J. A. Existence and uniform decay of a degenerate and generalized Klein-Gordon system with boundary damping. { \em Commun. Appl. Anal.} 4 (2000), no. 2, 173--196.
	
	\bibitem{Cousin} Cousin, Alfredo T., Frota, C�cero L., Larkin, Nickolai A. On a system of Klein-Gordon type equations with acoustic boundary conditions. {\em J. Math. Anal. Appl.} 293 (2004), no. 1, 293-309.
	
	\bibitem{Dehman} Dehman, B., G\'erard, P., Lebeau, G.  Stabilization and control for the nonlinear Schr\"odinger equation on a compact surface. {\em Math. Z.} 254 (2006), no. 4, 729-749.
	
	\bibitem{Dehman2} Dehman, B., Lebeau, G., Zuazua, E. Stabilization and control for the subcritical semilinear wave equation, {Anna. Sci. Ec. Norm. Super.} \textbf{36}, 525-551  (2003).
	
	\bibitem{David} Dos Santos Ferreira, D. Sharp $L^p$ Carleman estimates and unique continuation. {\em Journ�es ��quations aux D�riv�es Partielles�}, pages Exp. No. VI, 12. Univ. Nantes, Nantes, 2003.
	
	\bibitem{Ferreira1} Ferreira, J. S. Asymptotic behavior of the solutions of a nonlinear system of Klein-Gordon equations. { \em Nonlinear Anal.} 13 (1989), no. 9, 1115--1126.
	
	 \bibitem{Ferreira4} Ferreira, J. S. Exponential decay for a nonlinear system of hyperbolic equations with locally distributed dampings. {\em Nonlinear Anal.} 18 (1992), no. 11, 1015--1032.
	
	\bibitem{Ferreira2} Ferreira, J. S. Exponential decay of the energy of a nonlinear system of Klein-Gordon equations with localized dampings in bounded and unbounded domains. {\em Asymptotic Anal.} 8 (1994), no. 1, 73-92.
	
	\bibitem{Ferreira3} Ferreira, J., Menzala, G. P. Decay of solutions of a system of nonlinear Klein-Gordon equations. {\em Internat. J. Math. Math. Sci.} 9 (1986), no. 3, 471-483.
	
	\bibitem{Gerard} G\'erard, P. Microlocal defect measures, {\em Comm. Partial Differential Equations} 16 (1991) 1761-1794.
	

	\bibitem{Littman} S. Betel\'u, R. Gulliver, and W. Littman. Boundary control of PDEs via curvature flows: the view from the boundary. II. {\em Appl.
	Math. Optim.}, 46(2-3):167-178, 2002. Special issue dedicated to the memory of Jacques-Louis Lions.
	
	\bibitem{JOST} Jost, J.  Riemannian Geometry and Geometric Analysis. Springer Verlag, 2008.
	
	%\bibitem{Louredo} Lour�do, A. T., Miranda, M. M. Nonlinear boundary dissipation for a coupled system of Klein-Gordon equations. {\em Electron. J. Differential Equations} 2010, No. 120, 19 pp.
					
		
	\bibitem{Tataru} Koch, H., Tataru, D. Dispersive estimates for principally normal pseudodifferential operators. {\em Commun. Pure Appl. Math.} 58(2), 217-284 (2005)
		
	\bibitem{Lions2} Lions, J. L. Quelques m\'ethodes de R\'esolution des Probl\`emes Aux Limites Non Lin\'eaires. Dunod, Guthier-Villars, 1969.
	
	\bibitem{Liu0}
	Liu, K. and Liu, K.
	Exponential decay of energy of the Euler-Bernoulli beam with locally distributed Kelvin-Voigt damping,
	SIAM J. Control and Optim. \textbf{36}, 1086-1098 (1998).
	
	
	\bibitem{Liu}
	Liu, K., and Rao, B.
	Exponential stability for the wave equations with local Kelvin-Voigt damping,
	Z. Angew. Math. Phys. \textbf{57}, 419-432 (2006).
	
	\bibitem{Medeiros} Medeiros, L. A., Menzala, G. P. On a mixed problem for a class of nonlinear Klein-Gordon equations. {\em Acta Math. Hungar.} 52 (1988), no. 1-2, 61-69.

	\bibitem{Medeiros1} Medeiros, L. A., Miranda, M. M. Weak solutions for a system of nonlinear Klein-Gordon equations. {\em Ann. Mat. Pura Appl.} (4) 146 (1987), 173--183.
	
	\bibitem{Medeiros2}  Medeiros, L. A., Miranda, M. M. On the existence of global solutions of a coupled nonlinear Klein-Gordon equations.{\em Funkcial. Ekvac.} 30 (1987), no. 1, 147-161.
	
    \bibitem{pazy} Pazy A., Semigroups of Linear Operators and Applications to Partial Differential Equations, Applied Mathematical Sciences 44, Springer-Verlag, New York, 1983.

   	\bibitem{RT}  Rauch, J. and Taylor, M. Decay of solutions to
    n ondissipative hyperbolic systems on compact manifolds. {\em Comm.
    	Pure Appl. Math.} 28(4) (1975), 501-523.

    \bibitem{Ruiz} Ruiz, A. Unique Continuation for Weak Solutions of the Wave Equation  plus a Potential, {\em J. Math. Pures. Appl.} 71 (1992) 455-467.
    		
	
	\bibitem{Segal} Segal, I. E. Nonlinear partial differential equations in quantum field theory. 1965. {\em Proc. Sympos. Appl. Math., Vol. XVII} pp. 210--226 Amer. Math. Soc., Providence, R.I.
	
	\bibitem{Simon} Simon, J. Compact Sets in the space $L^p(0,T,B).$ Ann. Mat. Pura Appl. 146 65-96 (1987).
	
	\bibitem{Tebou1} Tebou, L. Sharp observability estimates for a system of two coupled nonconservative hyperbolic equations. Appl. Math. Optim. 66 (2012), no. 2, 175--207.
	
	\bibitem{Tebou}
	Tebou, L. Stabilization of some elastodynamic systems with localized Kelvin-Voigt damping,
	{\em Discrete and Continuous Dynamical Systems}, 36 7117-7136 (2016).




\end{thebibliography}
\end{document}